\documentclass{amsart}
\usepackage[utf8x]{inputenc}
\usepackage{amsmath, amsthm, amssymb}
\usepackage[usenames,dvipsnames,svgnames,table]{xcolor}
\usepackage[margin=1.26in]{geometry}
\usepackage{enumerate}
\usepackage{dsfont}
\usepackage{cleveref}
\usepackage{cite}

\newtheorem{theorem}{Theorem}[section]
\newtheorem{proposition}[theorem]{Proposition}
\newtheorem{lemma}[theorem]{Lemma}
\newtheorem{definition}[theorem]{Definition}
\newtheorem{remark}{Remark}

\newcommand{\BB}{\mathcal{B}}

\newcommand{\R}{\mathbb R}

\newcommand{\lin}{{\rm lin}}
\newcommand{\nlin}{{\rm nlin}}

\def\comma{ {\rm ,\qquad{}} }

\newcommand{\be}{\begin{equation}}
	\newcommand{\ee}{\end{equation}}

\numberwithin{equation}{section}
%\numberwithin{theorem}{section}
%\numberwithin{lemma}{section}
%\numberwithin{corollary}{section}
%\numberwithin{proposition}{section}

%\allowdisplaybreaks

%opening
\title[Quantitative bounds of solutions]{Quantitative bounds for critically bounded solutions to the three-dimensional Navier-Stokes equations in Lorentz spaces}
\author{Wen Feng}
\address{(W. Feng) Department of Mathematics, Niagara University, Lewiston, NY 14109, USA}
\email{wfeng@niagara.edu}
\author{Jiao He}
\address{(J. He) Universit\'e Paris-Saclay, CNRS, Laboratoire de math\'ematiques d’Orsay, 91405, Orsay, France.}
\email{jiao.he@universite-paris-saclay.fr}
\author{Weinan Wang}
\address{(W. Wang) Department of Mathematics, University of Oklahoma, Norman, OK 73072, USA}
\email{ww@ou.edu}

\begin{document}
	\begin{abstract}
		In this paper, we prove a quantitative regularity theorem and blow-up criterion of classical solutions for the three-dimensional Navier-Stokes equations. 
		By adapting the strategy developed by Tao in \cite{tao}, we obtain an explicit blow-up rate in the setting of critical Lorentz spaces $L^{3, \mathfrak{q}_{0} }(\R^3)$ with $3 \leq \mathfrak{q}_0 < \infty $. Our results generalize the  quantitative regularity theory in critical Lebesgue spaces $L^3(\R^3)$ in \cite{tao} and quantify the qualitative result by Phuc in \cite{Phuc2015}.
	\end{abstract}
	
	\maketitle
\section{Introduction}
In this paper, we are interested in giving some quantitative bounds for solutions of the three-dimensional incompressible Navier-Stokes equations in critical Lorentz spaces. The Navier-Stokes equations read
		\begin{equation}\label{eq:NSE}
		\begin{cases}
		u_{t}-\Delta u +u\cdot \nabla u+\nabla p=0,
		\\
		\nabla \cdot u=0
		,
		\end{cases}
		\end{equation}
				where $u (t, \cdot) : \R^3 \to \R^3$ denotes the velocity vector field of the fluid and $p (t, \cdot) : \R^3 \to \R$ is the pressure. It is well-known from the seminal paper of Leray \cite{leray1934} that for any divergence-free vector field $u_0 \in L^2(\R^3)$ there exists at least one weak solution to the Cauchy problem \eqref{eq:NSE}. However, it is unknown whether they are smooth for all positive times and the uniqueness is also still open. The Navier-Stokes equations \eqref{eq:NSE} are endowed with a scaling symmetry:
		\begin{equation*}\label{invariance}
			u_{\lambda}(t,x):=\lambda u(\lambda^{2}t, \lambda x)
			\comma
			p_{\lambda}(t, x):=\lambda^{2} p( \lambda^{2}t, \lambda x)
			\quad \text{for}\quad \lambda >0,
		\end{equation*}
		which gives us some critical (scale-invariant) spaces, for example, $L^{3} (\mathbb{R}^3)$. A natural question that we are interested in is that, if we assume that blowing-up solutions do exist and they blow up at time $T^* >0$, how their critical norms behave and will they blow up as well at $T^*$? Besides the simplest critical Lebesgue spaces $L^3$, there are other critical spaces , such as critical Lorentz spaces $L^{3, q}  (\mathbb{R}^3)$ with $3<q<\infty $ and critical Besov spaces $\dot{B}^{-1+3/p}_{p, q} (\mathbb{R}^3)$ with $3 < p, q< \infty$, etc. In particular, we have a chain of embeddings
		 \begin{equation*}
			L^3(\mathbb{R}^3) \hookrightarrow L^{3,q}(\mathbb{R}^3)  \hookrightarrow \dot{B}^{-1+3/p}_{p, q}(\mathbb{R}^3)
			.
			\end{equation*}
			In the present paper, we investigate the quantitative estimate of the critical Lorentz norm $L^{3, \mathfrak{q_0}} (\mathbb{R}^3)$, $3 \leq \mathfrak{q}_0 < \infty$ at the potential singularity and the corresponding blow-up criterion. 

	Before introducing our main theorems, let us first present some previous results regarding to the regularity theory and blow-up criterion of the Navier-Stokes equations. The first result was given by Leray \cite{leray1934}, proving that if $T^*$ is the maximal existence time of the solution $u$, we then necessarily have for any $p>3$, there is a constant $C(p)$ such that
	$$\|u\|_{L^{p}(\R^3)}\geq \frac{C(p)}{(T^* -t)^{\frac{1}{2} (1 - \frac{3}{p})}}.$$
	Later, it was proved by Prodi-Serrin-Ladyzhenskaya (1959-1967) \cite{Prodi1959, Lady1967, Serrin1962} that if $u$ blows up at $T_*$ with $3<p\leq \infty$, then 
	$\|u\|_{L^{q}_{t}L_{x}^{p}([0,T_*)\times \R^3)}=\infty,$
	where $3/p+2/q=1$. 
	The endpoint case $p=3$ was left open for many years until the remarkable result of Escauriaza, Seregin, and Sverak \cite{ESS} in 2003. By analyzing the blow-up profile and combing the unique continuation with backward uniqueness of the heat equation, they were able to prove that if $u$ blows up at a finite time $T^*$, then 
		\begin{equation}\label{criterion_ESS3}
			\limsup_{t\rightarrow T^*}\|u(t)\|_{L_{x}^{3}( \R^3)}=
			\infty.
		\end{equation}
		Later, the blow-up criterion above has been generalized by several authors. On the one hand, in the qualitative sense, Seregin \cite{seregin2012} improved the blow-up criterion \eqref{criterion_ESS3} by replacing the limit superior with a limit. For the non-endpoint borderline Lorentz spaces, by applying the backward uniqueness theory as well as an $\epsilon$-regularity criterion, Phuc \cite{Phuc2015} proved that if a Leray-Hopf weak solutions $u$ blows up at a finite time $T^*$, then for $3<q<\infty$
		\begin{equation}\label{criterion_Lorentz}
			\limsup_{t\rightarrow T^*}\|u(t)\|_{L_{x}^{3,q}( \R^3)}=\infty.
		\end{equation}
	There are results in other critical spaces; see, for example, \cite{Albritton2018, GKP2016, DD2009}. 
	
	From the quantitative point of view, in a recent breakthrough work, by establishing quantitative Carleman inequalities, Tao \cite{tao} proved the following slightly supercritical blow-up norm criterion: 
		\begin{equation}\label{criterion_Tao}
			\limsup_{t\rightarrow T^{-}_{*}}\frac{\|u\|_{L^{3}(\R^3)}}{(\log\log\log\frac{1}{T_{*}-t})^c}
			=\infty,
		\end{equation} 
		which is a quantitative version of the $L^\infty L^3$ regularity criterion \eqref{criterion_ESS3}.  Barker and Prange \cite{BP2021} gave a quantitative estimate of the local concentration of $L^3$ norm by using an alternative proof.  
		The same authors also proved a mild supercritical regularity criteria, in which they showed that if a solution blows up, then certain slightly supercritical Orlicz norm must blow up \cite{BP2021mild}.
		Later, Palasek \cite{palasek2021improved} showed that Tao's blow-up rate can be improved to $(\log\log\frac{1}{T_{*}-t})^c$ assuming that the solution is axis-symmetric, and he recently obtained a $(\log \log\log\log\frac{1}{T_{*}-t})^c$ blow-up rate \cite{SP1} for the higher dimensional ($d\geq 4$) case. There are several other related results \cite{MR4646860, PO}.
	
In the setting of Lorentz spaces, there are few quantitative results. Davies and Koch \cite{davies2021local} recently gave a blow-up rate in sub-critical Lorentz spaces, in which they showed that if a solution $u$ blows up at finite time $T^*$, then 
\begin{equation}\label{criterion_Lorentz-sub}
			\|u\|_{L^{p,q}(\R^3)}\geq \frac{C(p, q)}{(T^* -t)^{\frac{1}{2} (1 - \frac{3}{p})}}\quad \text{for} \quad  3<p<\infty, 1 \leq q \leq \infty.
		\end{equation}  
To the best of the authors' knowledge, there is no such quantitative results in the critical Lorentz case, i.e. when $p = 3$ in \eqref{criterion_Lorentz-sub}.  
As the Lorentz space  $L^{p,q}$ has the same scaling properties as the Lebesgue space $L^{p}$, 
Tao's results \cite{tao} for $L^3$ open the door to treat quantitatively the critical Lorentz spaces $L^{3, q}, q \geq 3$, which are bigger than the usual Lebesgue spaces. %Usually, the Lorentz spaces can help us to get some logarithmic so the estimates in such spaces are more precise.
The main goal of the present paper is to obtain new quantitative regularity theorem and blow-up criteria of solutions for the Navier-Stokes equations in the framework of \textit{critical Lorentz spaces} $L^{3, \mathfrak{q}_{0} }(\R^3)$ for $3\leq \mathfrak{q}_{0} <\infty$.  
Our main results are stated as follow. 
\begin{theorem}\label{t.w08292}
Let $(u,p)$ be a classical solution to the incompressible Navier-Stokes system \eqref{eq:NSE}, which blows up at time $T_{*}<\infty$. Then, with a constant $c>0$ and $3\leq \mathfrak{q}_{0} <\infty$
\begin{equation}\label{criteria_us}
				\limsup_{t\rightarrow T^{-}_{*}}\frac{\|u\|_{L_x^{3, \mathfrak{q}_{0} }(\R^3)}}{(\log\log\log\frac{1}{T_{*}-t})^c}
				=
				\infty
				.
\end{equation}
\end{theorem}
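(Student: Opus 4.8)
The plan is to adapt Tao's quantitative strategy from \cite{tao} to the Lorentz setting, following the pipeline: propagation of quantitative bounds, concentration of enstrophy at many scales, and a Carleman-inequality-based backward uniqueness argument to derive a contradiction with too many scales of concentration. The first step is to reduce matters to a quantitative \emph{a priori} estimate: assuming a solution $u$ on $[t_0, t_1]$ with $\|u(t)\|_{L^{3,\mathfrak{q}_0}} \le A$ on this time interval (with $A$ large), one wants to show $u$ remains smooth with explicit, $A$-dependent bounds, and that if $A$ is only polylogarithmically large in $1/(T_*-t)$ then no blow-up can occur at $T_*$. Running this contrapositive against the hypothesis of blow-up at $T_*$ yields \eqref{criteria_us}. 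So the technical heart is: \emph{if $\|u\|_{L^\infty_t L^{3,\mathfrak{q}_0}_x([t_0,t_1]\times\R^3)} \le A$ then on a slightly smaller time interval one has good bounds on higher norms with constants like $\exp(\exp(\exp(A^{O(1)})))$.}

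The key modification relative to \cite{tao} is that wherever Tao uses the $L^3$ norm and its scaling, one substitutes the Lorentz norm $L^{3,\mathfrak{q}_0}$, which has the same scaling but sits in a larger space. I would carry this out in the following order. First, establish the basic linear and bilinear estimates: the heat semigroup $e^{t\Delta}$ and the Oseen kernel / Leray projector are bounded on $L^{3,\mathfrak{q}_0}$ (by real interpolation, since they are bounded on nearby Lebesgue spaces), and one has the smoothing estimates $\|e^{t\Delta} f\|_{L^p} \lesssim t^{-\frac{3}{2}(\frac{1}{3}-\frac{1}{p})}\|f\|_{L^{3,\mathfrak{q}_0}}$ for $p>3$, via the embedding $L^{3,\mathfrak{q}_0}\hookrightarrow \dot B^{-3/p}_{p,\mathfrak{q}_0}$ or directly from $L^{3,1}\hookrightarrow L^3$-type real-interpolation bounds on the kernel. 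These replace the analogous $L^3$ estimates and give local-in-time mild-solution control with quantitative constants. Second, redo the ``epoch of regularity'' / local smoothing argument: from $\|u(t)\|_{L^{3,\mathfrak{q}_0}}\le A$ one extracts, in each unit-scale time interval, a sub-interval and a frequency-localized bound on $u$, using the same bootstrap (Duhamel iteration, Bernstein, paraproduct decomposition) as Tao, but with the Lorentz norm as the conserved-scaling quantity. Third, port the key \emph{quantitative} ingredients in sequence: (a) bounded total speed / transport estimates; (b) the ``annuli of regularity'' and the lower bound on local enstrophy forcing concentration at a dyadic scale whenever the solution is large; (c) the iteration producing $\gtrsim (\log\frac{1}{T_*-t})$-many disjoint scales of concentration near $T_*$; and (d) the Carleman inequalities (unique continuation and backward uniqueness of $\partial_t - \Delta$ with a lower-order potential bounded in terms of $A$), which are insensitive to the Lorentz-vs-Lebesgue distinction since they only see $L^\infty_t L^\infty_x$-type local bounds on $u$ and its derivatives obtained in the earlier steps.

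I expect the main obstacle to be step two together with the quantitative harmonic-analytic bookkeeping in (b)--(c): the Lorentz norm is not given by an integral of a local density the way $L^3$ is, so the ``pigeonholing over scales'' arguments — where Tao exploits $\int |u|^3$ being additive over disjoint spatial regions — must be replaced. The natural fix is to work instead with a proxy: either use the characterization $\|f\|_{L^{3,\mathfrak{q}_0}}^{\mathfrak{q}_0} \sim \sum_{j\in\Z} \big(2^j |\{|f|>2^j\}|^{1/3}\big)^{\mathfrak{q}_0}$, which \emph{is} a sum and hence amenable to pigeonholing, or descend once and for all to a Besov norm $\dot B^{-1+3/p}_{p,\mathfrak{q}_0}$ with $p$ slightly above $3$ (using the embedding recalled in the introduction), run Tao's argument with that Besov norm playing the role of $L^3$ — Littlewood--Paley pieces are manifestly additive in the relevant sense — and only invoke $L^{3,\mathfrak{q}_0}\hookrightarrow \dot B^{-1+3/p}_{p,\mathfrak{q}_0}$ at the very start to feed in the hypothesis. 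A secondary, more routine difficulty is keeping all constants triply-exponential rather than worse: this requires care that the Lorentz/Besov substitutions do not introduce losses that would degrade $\log\log\log$ to $\log\log$, but since the spaces are scale-invariant the scaling-based counting that produces the triple logarithm is unaffected, so I expect the same $c>0$ exponent to go through (indeed any critical space in the chain of embeddings should give the same rate by this argument).
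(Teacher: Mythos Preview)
Your proposal is correct and follows essentially the same high-level approach as the paper: both reduce Theorem~\ref{t.w08292} to a quantitative regularity estimate (Theorem~\ref{t.w08291}) with triple-exponential constants and then derive \eqref{criteria_us} by contradiction against the Prodi--Serrin--Ladyzhenskaya criterion; and both prove the quantitative estimate by running Tao's pipeline (pointwise bounds, bounded total speed, epochs of regularity, back propagation, annuli of regularity, Carleman inequalities) with Lorentz-space substitutes for the basic heat-kernel and multiplier estimates.

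The one point worth flagging is how the non-additivity obstacle you identify is actually handled. Rather than descending to a Besov proxy or pigeonholing on the layer-cake characterization of $L^{3,\mathfrak{q}_0}$, the paper systematically splits $u = u^{\lin} + u^{\nlin}$ with $u^{\lin}(t) = e^{(t+1)\Delta}u(-1)$; heat smoothing then places $u^{\lin}$ and all its derivatives in \emph{genuine} $L^3_x$ (indeed $L^p_x$ for all $3\le p\le\infty$) with norm $O(M)$, while Duhamel and energy estimates place $u^{\nlin}$ in $L^2$-based spaces. Both quantities are additive over disjoint spatial regions, so the spatial pigeonholing needed for the annuli of regularity runs exactly as in \cite{tao} without ever confronting the Lorentz structure directly. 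This is simpler than either of your proposed workarounds and sidesteps the risk that a Besov detour (running the whole argument in a strictly larger space) introduces extra losses. Your layer-cake idea is, however, essentially what is implicitly needed at the very end to sum the concentrations at many disjoint scales into a contradiction with the $L^{3,\mathfrak{q}_0}$ bound, and this is precisely why the endpoint $\mathfrak{q}_0=\infty$ is excluded.
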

\begin{theorem}\label{t.w08291}
Let $(u,p)$ be a classical solution to the system \eqref{eq:NSE} and $3\leq \mathfrak{q}_{0} <\infty$. Assume that
			\begin{equation*}
			\|u\|_{L_{t}^{\infty}L_{x}^{3, \mathfrak{q}_{0}}([0,T]\times\mathbb R^3)}
			\lesssim M 
			\end{equation*}
			for some constant $M\geq 2$. Then, for $0<t\leq T$ and $j=0,1$, the following hold
			\begin{equation*}
			|\nabla^{j}u|
			\leq
			\exp{\exp{\exp{(M^{O(1)})}}}
			t^{-\frac{j+1}{2}} \comma
			|\nabla^{j}\omega|
			\leq
			\exp{\exp{\exp{(M^{O(1)})}}}
			t^{-\frac{j+1}{2}},
    		\end{equation*}
			where the vorticity $\omega=\nabla \times u$.
	\end{theorem}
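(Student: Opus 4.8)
My plan is to run the quantitative scheme of Tao \cite{tao}, replacing the critical Lebesgue norm throughout by the critical Lorentz norm and drawing on the Lorentz-space $\epsilon$-regularity and pressure machinery underlying Phuc's qualitative result \cite{Phuc2015}. First I would reduce to a unit-scale statement: for fixed $t\in(0,T]$ set $v(s,y):=\sqrt t\,u(ts,\sqrt t\,y)$, a classical solution of \eqref{eq:NSE} on $[0,1]\times\R^3$; since $\|\cdot\|_{L^{3,\mathfrak{q}_0}(\R^3)}$ is invariant under this scaling, $\|v\|_{L^\infty_s L^{3,\mathfrak{q}_0}([0,1]\times\R^3)}\lesssim M$ still holds, so it suffices to prove that a classical solution $v$ on $[0,1]\times\R^3$ with $\|v\|_{L^\infty_s L^{3,\mathfrak{q}_0}}\le M$ ($M\ge2$) and vorticity $\omega:=\nabla\times v$ satisfies
\[
  |\nabla^j v(1,\cdot)|+|\nabla^j\omega(1,\cdot)| \ \le\ \exp(\exp(\exp(M^{O(1)}))),\qquad j=0,1 ;
\]
undoing the scaling then reinstates the factor $t^{-(j+1)/2}$.

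Next I would assemble a Lorentz toolbox. The single structural difference from \cite{tao} is that for $\mathfrak{q}_0\ge3$ one has $L^3(\R^3)=L^{3,3}(\R^3)\hookrightarrow L^{3,\mathfrak{q}_0}(\R^3)$, so the latter is strictly larger and $v$ need not be locally $L^3$. I would therefore reroute every local estimate of \cite{tao} that used local $L^3$ integrability through H\"older's inequality in Lorentz spaces: from $\tfrac12=\tfrac13+\tfrac16$ and $\|\1_{B_R}\|_{L^{6,q}}\sim_q R^{1/2}$ (any $q\in[1,\infty]$) one gets $\|v(s)\|_{L^2(B_R)}\lesssim_{\mathfrak{q}_0}R^{1/2}\|v(s)\|_{L^{3,\mathfrak{q}_0}}\le R^{1/2}M$ — exactly the critical scaling Tao uses — and more generally $\|v\|_{L^p(B_R)}\lesssim_{\mathfrak{q}_0}R^{3/p-1}M$ for $1\le p<3$. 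On top of this I would invoke, in the Lorentz scale, boundedness of the Leray projector and of the double Riesz transforms $R_iR_j$ (hence of the representation $p=R_iR_j(v_iv_j)$ of the pressure, split into near- and far-field pieces) on $L^{p,q}(\R^3)$ for $1<p<\infty$, $1\le q\le\infty$, together with the Caffarelli--Kohn--Nirenberg local energy inequality and interior parabolic regularity for \eqref{eq:NSE}. These inputs let me transfer Tao's two structural lemmas: \emph{epochs of regularity} — every time interval of length $\sim R^2$ inside $[\tfrac12,1]$ contains a subinterval $I$ of length $\gtrsim M^{-O(1)}R^2$ on which $v$ is smooth with $\|\nabla^j v\|_{L^\infty(I\times B_R)}\lesssim M^{O(1)}R^{-1-j}$ — and the \emph{bounded-total-speed} estimate, obtained by pairing the local energy inequality with the bound $\|v(s)\|_{L^2(B_R)}\lesssim R^{1/2}M$.

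The core of the argument is the back-propagation. Arguing by contradiction, if $|\nabla^j v(1,x_0)|$ or $|\nabla^j\omega(1,x_0)|$ were to exceed the claimed bound for some $j\in\{0,1\}$ and some $x_0$, then the Lorentz-space $\epsilon$-regularity criterion and the epochs lemma would force the vorticity to concentrate, to height $\gtrsim M^{-O(1)}N$ on a parabolic cylinder of radius $\sim N^{-1}$ for a large frequency $N$. I would then apply the two quantitative Carleman inequalities of \cite{tao} — quantitative backward uniqueness and quantitative unique continuation for $(\partial_s-\Delta)\omega=-v\cdot\nabla\omega+\omega\cdot\nabla v$ — which concern only this differential inequality and so transfer verbatim, the function space of the data playing no role in their proofs. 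Iterating them propagates lower bounds on $\int_{\text{annulus}}|\omega|^2$, and hence, via Lorentz--H\"older used from below ($\|f\|_{L^{3,\mathfrak{q}_0}}\gtrsim_{\mathfrak{q}_0}\mu\,|E|^{1/3}$ whenever $|f|\ge\mu$ on $E$), lower bounds on $\|v(s)\|_{L^{3,\mathfrak{q}_0}}$, backward in time through a nested family of disjoint spatial annuli. The spreading of these regions of large vorticity, confronted with the global cap $\|v\|_{L^\infty_s L^{3,\mathfrak{q}_0}}\le M$ on their total size, limits the number of admissible annuli to $\exp(\exp(M^{O(1)}))$; a genuine singular point would require strictly more, forcing $\|v(s_*)\|_{L^{3,\mathfrak{q}_0}}\gg M$ at some $s_*\in(\tfrac12,1)$, the desired contradiction. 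Unwinding this iteration is what produces the third exponential, exactly as in \cite{tao}.

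The hard part will be the failure of $L^{3,\mathfrak{q}_0}\hookrightarrow L^3_{\rm loc}$ for $\mathfrak{q}_0>3$: every use in \cite{tao} of pointwise-in-space $L^3$ control of the velocity must be replaced either by local $L^2$ control plus the global Lorentz bound through the Lorentz--H\"older inequality (keeping each estimate scale-critical) or by the global Lorentz norm handled through its duality with $L^{3/2,\mathfrak{q}_0'}$. This will be most delicate in the localized pressure estimates and in the ``lower bound on the critical norm from disjoint concentration sets'' that closes the contradiction; I will also need to track the $\mathfrak{q}_0$-dependence of the Calder\'on--Zygmund, maximal-function and Lorentz-interpolation constants, which degenerate as $\mathfrak{q}_0\to\infty$, in accordance with the hypothesis $\mathfrak{q}_0<\infty$. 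Everything else — the Carleman inequalities, the epochs of regularity, and the back-propagation combinatorics — should go through with only cosmetic changes.
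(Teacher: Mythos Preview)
Your overall strategy --- rescale to unit time, port Tao's building blocks to the Lorentz scale, run the Carleman back-propagation, and close by contradiction against the global $L^{3,\mathfrak q_0}$ bound --- is correct in spirit, but it diverges from the paper's route in two substantive ways.

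First, the paper (following \cite{tao} exactly) works entirely in \emph{frequency space}. The concentration object is $|P_N u(t,x)|$, and the back-propagation, bounded total speed, and annuli of regularity are all proved by Duhamel, Bernstein, and a localized Fourier multiplier bound --- each of these established first in $L^{p,q}$ via O'Neil's convolution inequality and Lorentz H\"older. The Caffarelli--Kohn--Nirenberg local energy inequality and the $\epsilon$-regularity criterion from \cite{Phuc2015} are never invoked; your plan to route the basic estimates through them is a genuinely different, physical-space approach. It may be viable, but the quantitative seed the Carleman iteration needs is the frequency-localized statement $|P_{N_0}u(t_0,x_0)|\ge M_1^{-1}N_0$, and you do not explain how ``$|\nabla^j v(1,x_0)|$ large $\Rightarrow$ $\epsilon$-regularity fails'' produces such a seed at a specific dyadic frequency.

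Second --- and this is the real gap --- your outline stops where the paper's intermediate Theorem (labelled \ref{t.w08293} there) ends: the Carleman contradiction only yields that $\|P_N u\|_{L^\infty_{t,x}}\le M_1^{-1}N$ for all $N\ge N_*:=\exp\exp\exp(M^{O(1)})$. This high-frequency smallness does \emph{not} sum to an $L^\infty$ bound on $u$. The paper supplies a separate enstrophy argument to bridge the gap: set $F(t)=\tfrac12\int_{\R^3}|\omega^{\mathrm{nlin}}|^2$, paraproduct-decompose the stretching term $\int\omega^{\mathrm{nlin}}\cdot(\omega^{\mathrm{nlin}}\cdot\nabla)u^{\mathrm{nlin}}$, use the bound $|P_N\omega^{\mathrm{nlin}}|\lesssim M_1^{-1}N^2$ for $N\ge N_*$ to absorb the high--high piece into $\int|\nabla\omega^{\mathrm{nlin}}|^2$, and arrive at $\partial_t F\lesssim MN_*^{2}F+M^4$. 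Gronwall on subintervals of length $M^{-1}N_*^{-2}$ plus pigeonhole gives $F(t)\lesssim N_*^{O(1)}$ on $[3/4,1]$, and a final pass through the epochs-of-regularity bootstrap upgrades this to the claimed pointwise bounds on $\nabla^j u$ and $\nabla^j\omega$. Without this step your contradiction closes only Theorem~\ref{t.w08293}, not Theorem~\ref{t.w08291}.
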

\begin{remark}
Notice that when $\mathfrak{q}_{0} = 3$, our theorem reduces to the case of Tao in \cite{tao}. 
\end{remark}

\begin{remark}
It is not clear whether our results hold in the endpoint case $L_{t}^{\infty}L_{x}^{3,\infty}$. For other results in such spaces, we refer to \cite{BP2020} and \cite{BP2021} for quantitative results in local sense and \cite{Luo-Tsai2013} for qualitative result. 
\end{remark}	
Comparing the blow-up criteria \eqref{criteria_us} in Theorem \ref{t.w08292} with the previous results, we see that our theorem implies that the necessary condition of blow up \eqref{criterion_Tao} can be improved by replacing the $L^3$ norm with a smaller $L^{3, \mathfrak{q}_{0} }(\R^3)$ quasi-norm with $3\leq \mathfrak{q}_{0}<\infty$, answering a question of Tao, see Remark 1.6 in \cite{tao}.
		In particular, we recover the result \eqref{criterion_Tao} when $\mathfrak{q}_{0}=3$. Moreover, our result can be seen as a quantitative version of the blow-up criteria \eqref{criterion_Lorentz} proved by Phuc in \cite{Phuc2015}. %Another novelty of the present paper is that, there is few quantitative result in the framework of  the Lorentz spaces and we give the first quantitative result in the context of critical Lorentz spaces.
		Let us remark that although the blow-up rate \eqref{criterion_Lorentz-sub} is better than the one we established in Theorem \ref{t.w08292}, it is not clear whether their results still hold in the critical case.%, which is delicate to treat due to the structure of the Navier-Stokes equations.}
	
%	There are several difficulties appearing when we 

%	It is worth to mention that before the strategy developed by Tao \cite{tao}, all qualitative results were proven by a contradiction argument, while Tao 
	
The rest of this paper is organized as follows. In Section \ref{notation}, we introduce some notation and preliminaries. Furthermore, we prove H\"older's and Young's inequalities in the Lorentz setting. In Section \ref{estimate}, we state our main quantitative estimates (Propositions \ref{prop : bounded}-\ref{p.w08294}). We demonstrate how to gather these statements together to prove our main results (Theorem \ref{t.w08292} and Theorem \ref{t.w08291}) of this paper in Section \ref{proofs}. More precisely, our strategy is as follows : first, by establishing some pointwise derivative estimates, bounded total speed and Epoch of regularity (see Proposition \ref{prop : bounded}-\ref{p.w08292}), we are able to prove that, assuming $
\|u\|_{L_{t}^{\infty}L_{x}^{3, \mathfrak{q}_{0}}([0,T]\times\mathbb R^3)}
\leq M$ is such that $N_0^{-1} |P_{N_0} u(x_0, t_0)| > M^{-O(1)}$, we create a chain of ``bubbles of concentration" (see Propositions \ref{p.w08293}-\ref{p.w08294}). Then, using a similar procedure as in Tao (p.36-p.41 in \cite{tao}), we show the lower bound of $N_0$ (see Theorem \ref{t.w08293}). Once we obtain such lower bound, we show the first main result (see Theorem \ref{t.w08291}) via a contrapositive argument, and then the second main result (see Theorem \ref{t.w08292}) is proven by contradiction. Lastly, we include the Carleman estimates together with auxiliary estimates in the Appendix.

	\section{Notation and preliminaries}\label{notation}
		\subsection{Notation}
		Throughout the paper we use the following notation. 
		For $1\leq p<\infty$,  $W^{k,p}$ space is the regular Sobolev space. 
		We have Plancherel's equality, $\|f\|_{L^2}=\|\hat{f}\|_{L^2}$.
		For any $N\geq 0$, we define the Littlewood-Payley projection $P_{\leq N}$
		\begin{equation*}
		\widehat{P_{\leq N}f}(\xi)=\varphi(\xi/N)\hat{f}(\xi),
		\end{equation*}
		where $\varphi$ is a smooth bump function on the ball $B(0,1)$ with $\varphi=1$ on $B(0,1/2)$. Then, we define
$$
		P_{ N}:=P_{\leq N}-P_{\leq  N/2} \comma
		P_{ >N}:=1-P_{\leq N} \comma
		\widetilde{P}_{ N}:=P_{\leq  N/2}-P_{\leq  N/4}. $$
		Therefore, $P_{\leq N}f=\sum_{k=0}^{\infty}P_{2^{-k} N}f$ and $P_{> N}f=\sum_{k=1}^{\infty}P_{2^{k} N}f$.
We remark here that these operators commute with other Fourier multipliers such as $\Delta$, $e^{t \Delta}$ and the Leray projector $\mathbb P$ defined by
		\begin{equation*}
			\mathbb P
			=
			I + \nabla (-\Delta)^{-1} \nabla \cdot.
		\end{equation*} 
		Next, we define the Lorentz space. 
		\begin{definition}\label{lorentz}
			For a measurable function $f: \Omega \rightarrow \mathbb R$, we define:
		\begin{equation*}
			d_{f,\Omega}(\alpha):=
			|\{x\in \Omega: |f(x)|>\alpha\}|
			.
		\end{equation*}
			Then, the Lorentz spaces $L^{p,q}(\Omega)$ with $1\leq p<\infty$, $1\leq q \leq \infty$ is the set of all functions $f$ on $\Omega$ such that the quasinorm $\|f\|_{L^{p,q}(\Omega)}$ is finite and
			\begin{equation*}
			\|f\|_{L^{p,q}(\Omega)}
			:=
			\left(p\int_{0}^{\infty}\alpha^{q} d_{f,\Omega}(\alpha)^{\frac{q}{p}}\frac{d\alpha}{\alpha}\right)^{1/q}
			\comma
			\|f\|_{L^{p,\infty}(\Omega)}
			:=
			\sup_{\alpha>0} \alpha d_{f,\Omega}(\alpha)^{1/p}
			.
			\end{equation*}
		\end{definition}
		When we say $A \lesssim B$, it means there is a constant $C>0$ such that $A\leq CB$. The space $L^{p, \infty}$ is known as the weak $L^p$ space and notice that when $q=p$, we have $\|f\|_{L^{p,p}(\Omega)}=\|f\|_{L^{p}(\Omega)}$ and $L^{p,q_1}(\Omega) \subset L^{p, q_2} (\Omega)$ whenever $1 \leq q_1 \leq q_2 \leq \infty$.
	\subsection{Preliminaries}
	The next lemma is H\"older's inequality in Lorentz spaces (Theorem 4.5 in \cite{hunt1966p}).
		\begin{lemma}[H\"older's inequality,  \cite{hunt1966p}] \label{Lemma-Holder}
			Suppose $f \in L^{r_1, s_1} (\R^3)$, $g \in L^{r_2, s_2} (\R^3)$ with $0 < r_1, r_2, r < \infty$, $0 < s_1, s_2, s \leq \infty$,
			\begin{equation*}\label{ineq : Holder}
				1/r=1/r_{1}+1/r_{2}
				\quad
				\text{and}
				\quad
				1/s = 1/s_{1}+1/s_{2}
			\end{equation*}
			Then $fg \in L^{r,s}(\R^3)$ and 
			\begin{equation*}
			\|fg\|_{L^{r,s}(\R^3)}
			\leq
			C(r_1, r_2, s_1, s_2)
			\|f\|_{L^{r_{1},s_{1}}(\R^3)}
			\|g\|_{L^{r_{2},s_{2}}(\R^3)}.
			\end{equation*}
		\end{lemma}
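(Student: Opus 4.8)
The plan is to reduce the inequality to the classical Hölder inequality on the half-line $(0,\infty)$ equipped with the scaling-invariant measure $\mathrm{d}t/t$, via the decreasing rearrangement. For a measurable $f$ on $\Omega$ I would use $f^{*}(t):=\inf\{\alpha>0:\,d_{f,\Omega}(\alpha)\le t\}$, the non-increasing right-continuous function equimeasurable with $|f|$, and first record the exact identity
\[
  \|f\|_{L^{p,q}(\Omega)}=\Big\|\,t^{1/p}f^{*}(t)\,\Big\|_{L^{q}((0,\infty),\,\mathrm{d}t/t)}\qquad(q<\infty),\qquad \|f\|_{L^{p,\infty}(\Omega)}=\sup_{t>0}t^{1/p}f^{*}(t).
\]
For $q<\infty$ this follows by writing $f^{*}(t)^{q}=\int_{0}^{\infty}q\alpha^{q-1}\1_{\{f^{*}(t)>\alpha\}}\,\mathrm{d}\alpha$, using the identity $\{t>0:\,f^{*}(t)>\alpha\}=(0,d_{f,\Omega}(\alpha))$, and applying Fubini's theorem, so that the inner $t$-integral produces $\frac{p}{q}\,d_{f,\Omega}(\alpha)^{q/p}$ and one recovers exactly the quantity in Definition~\ref{lorentz}.

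Next I would invoke the product bound for rearrangements: if $|f|$ exceeds $f^{*}(t_{1})$ only on a set of measure $\le t_{1}$ and $|g|$ exceeds $g^{*}(t_{2})$ only on a set of measure $\le t_{2}$, then $|fg|\le f^{*}(t_{1})g^{*}(t_{2})$ off a set of measure $\le t_{1}+t_{2}$, hence $(fg)^{*}(t_{1}+t_{2})\le f^{*}(t_{1})g^{*}(t_{2})$, and taking $t_{1}=t_{2}=t/2$ gives $(fg)^{*}(t)\le f^{*}(t/2)\,g^{*}(t/2)$. Combining this with the identity above and the splitting $t^{1/r}=t^{1/r_{1}}t^{1/r_{2}}$, then applying the classical Hölder inequality on $L^{s}((0,\infty),\mathrm{d}t/t)$ with the conjugate exponents $s_{1}/s$ and $s_{2}/s$ (which are $\ge 1$ since $1/s=1/s_{1}+1/s_{2}$ forces $s_{1},s_{2}\ge s$; when $s=\infty$ necessarily $s_{1}=s_{2}=\infty$ and one uses sup-norms, while if one $s_{i}=\infty$ one uses the $L^{1}$–$L^{\infty}$ version), I would obtain
\[
  \|fg\|_{L^{r,s}(\R^3)}\le\Big\|t^{1/r_{1}}f^{*}(t/2)\Big\|_{L^{s_{1}}(\mathrm{d}t/t)}\,\Big\|t^{1/r_{2}}g^{*}(t/2)\Big\|_{L^{s_{2}}(\mathrm{d}t/t)}.
\]
The change of variables $t\mapsto 2t$ preserves $\mathrm{d}t/t$ and turns each factor into $2^{1/r_{i}}\|{\cdot}\|_{L^{r_{i},s_{i}}(\R^3)}$ by the identity of the first step, so the lemma follows with $C(r_{1},r_{2},s_{1},s_{2})=2^{1/r_{1}+1/r_{2}}=2^{1/r}$.

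There is no deep obstacle here — this is essentially Hunt's classical theorem — so the effort lies entirely in careful bookkeeping: verifying the Fubini computation behind the rearrangement identity (in particular that $\{f^{*}>\alpha\}$ is exactly the stated interval, which uses right-continuity and monotonicity of $d_{f,\Omega}$), and handling the endpoint exponents $s,s_{1},s_{2}\in\{\infty\}$, where Hölder must be used in its $L^{1}$–$L^{\infty}$ form and the $L^{p,q}$ functional is only a quasi-norm (notably for $p=1$). One could instead argue directly from the sublevel-set inclusion $\{|fg|>\lambda\}\subset\{|f|>\mu\}\cup\{|g|>\lambda/\mu\}$ optimized over $\mu>0$, but the rearrangement route keeps the constants transparent.
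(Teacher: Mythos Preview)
Your argument is correct and is the standard rearrangement proof of Hunt's Hölder inequality for Lorentz spaces. Note, however, that the paper does not supply its own proof of this lemma: it is simply quoted from \cite{hunt1966p} (Theorem~4.5 there) and used as a black box, so there is nothing to compare against beyond confirming that your proof recovers the cited result, which it does (with the explicit constant $2^{1/r}$).
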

		The next lemma is Young's convolution inequality in Lorentz spaces, also known as ``O'Neil's convolution inequality" (Theorem 2.6 of O'Neil's paper \cite{o1963convolution}).
		\begin{lemma}[Young's inequality, \cite{o1963convolution}]\label{Lemma_young}
			Suppose $f \in L^{r_1, s_1} (\R^3)$, $g \in L^{r_2, s_2} (\R^3)$ with $1 < r_1, r_2, r < \infty$, $0 < s_1, s_2, s \leq \infty$,
			\begin{equation*}
				1/r+1=1/r_{1}+1/r_{2}
				\quad
				\text{and}
				\quad
				1/s\leq 1/s_{1}+1/s_{2}
			\end{equation*}
			Then $f *g \in L^{r,s}(\R^3)$ and 
			\begin{equation*}
				\|f*g\|_{L^{r,s}(\R^3)}
				\leq
				3r \|f\|_{L^{r_{1},s_{1}}(\R^3)}
				\|g\|_{L^{r_{2},s_{2}}(\R^3)}.
			\end{equation*}
		\end{lemma}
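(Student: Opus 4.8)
The plan is to follow O'Neil's rearrangement argument \cite{o1963convolution}, whose engine is a pointwise bound for the maximal function of a convolution. For measurable $h$ on $\R^3$ let $h^{*}$ denote its decreasing rearrangement on $(0,\infty)$ and set $h^{**}(t):=\tfrac1t\int_0^t h^{*}(\sigma)\dd\sigma$; recall that for $1<p<\infty$, in the normalization of Definition \ref{lorentz},
\[
\|h\|_{L^{p,q}(\R^3)}=\big\|t^{1/p}h^{*}(t)\big\|_{L^{q}(\dd t/t)}\ \approx\ \big\|t^{1/p}h^{**}(t)\big\|_{L^{q}(\dd t/t)},
\]
with the obvious $\sup$-modification when $q=\infty$, the comparability here being Hardy's inequality --- which is the place $p>1$ is used, hence the hypothesis $r_1,r_2,r>1$. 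I would first reduce to the equality $1/s=1/s_1+1/s_2$: if only $1/s\le 1/s_1+1/s_2$, set $s_0:=(1/s_1+1/s_2)^{-1}\le s$ and use the embedding $L^{r,s_0}(\R^3)\hookrightarrow L^{r,s}(\R^3)$; the cases $s=\infty$ or some $s_i=\infty$ run identically with suprema in place of integrals.

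The heart of the matter is O'Neil's pointwise inequality
\[
(f*g)^{**}(t)\ \le\ t\,f^{**}(t)\,g^{**}(t)\ +\ \int_t^\infty f^{*}(\sigma)\,g^{*}(\sigma)\dd\sigma,\qquad t>0,
\]
which I would establish as in \cite{o1963convolution} by the rearrangement/truncation method: starting from $t\,(f*g)^{**}(t)=\sup_{|E|=t}\int_E|f*g|\,\dd x$ and $|f*g|\le|f|*|g|$, split $f$ and $g$ at the thresholds $f^{*}(t)$ and $g^{*}(t)$, control the four resulting convolutions by the endpoint Young estimates $L^{1}*L^{\infty}\to L^{\infty}$ and $L^{1}*L^{1}\to L^{1}$ together with the Hardy--Littlewood bound $\int_E|h|\le\int_0^{|E|}h^{*}$, and assemble. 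This is the one genuinely delicate step; everything afterwards is bookkeeping.

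Granting the pointwise bound, I would split $\|f*g\|_{L^{r,s}(\R^3)}\lesssim A+B$ along its two terms. For $A$, the relation $1+1/r=1/r_1+1/r_2$ gives $t^{1/r}\cdot t=t^{1/r_1}\cdot t^{1/r_2}$, so $A\approx\big\|(t^{1/r_1}f^{**})(t^{1/r_2}g^{**})\big\|_{L^{s}(\dd t/t)}$, and H\"older in $t$ with exponents $s_1/s,\,s_2/s$ followed by the $h^{**}$-characterization above yields $A\lesssim\|f\|_{L^{r_1,s_1}(\R^3)}\|g\|_{L^{r_2,s_2}(\R^3)}$. For $B$, write $f^{*}(\sigma)g^{*}(\sigma)=\big(\sigma^{1/r_1}f^{*}(\sigma)\big)\big(\sigma^{1/r_2}g^{*}(\sigma)\big)\,\sigma^{-1-1/r}$ and invoke the weighted Hardy inequality
\[
\Big(\int_0^\infty\Big(t^{1/r}\int_t^\infty\phi(\sigma)\,\sigma^{-1-1/r}\dd\sigma\Big)^{s}\tfrac{\dd t}{t}\Big)^{1/s}\ \lesssim\ \Big(\int_0^\infty\phi(t)^{s}\,\tfrac{\dd t}{t}\Big)^{1/s},
\]
which holds since $r>1$ makes the weight exponent $1/r$ positive; with $\phi(\sigma)=(\sigma^{1/r_1}f^{*}(\sigma))(\sigma^{1/r_2}g^{*}(\sigma))$ and one more application of H\"older exactly as for $A$ (now through the $h^{*}$-identity) this gives $B\lesssim\|f\|_{L^{r_1,s_1}(\R^3)}\|g\|_{L^{r_2,s_2}(\R^3)}$. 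Adding the two bounds proves the estimate, and keeping track of the constants in Hardy's inequality and in the $f^{*}$-versus-$f^{**}$ comparison produces the explicit factor $3r$. (Alternatively the lemma follows from the classical Young inequalities at the Lebesgue corners by bilinear real interpolation, but the rearrangement route is more self-contained and gives an explicit constant.)
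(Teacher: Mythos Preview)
Your proposal is correct and follows O'Neil's original rearrangement argument faithfully; however, the paper does not supply its own proof of this lemma --- it is stated as a citation of Theorem~2.6 in \cite{o1963convolution} with no argument given. So there is nothing to compare against: you have filled in precisely the proof the paper defers to the reference, and your sketch (pointwise bound for $(f*g)^{**}$, then H\"older plus weighted Hardy on the two resulting terms) is the standard route to the explicit constant $3r$.
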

		\begin{lemma}[Sobolev's inequality, \cite{tartar1998imbedding}]\label{l:jh} Suppose $1 \leq p \leq 3 $, 
			then 
			\begin{equation*}
				\|f\|_{L^{\frac{3p}{3-p},p}(\R^3)}
				\leq
				C(p) \|\nabla f\|_{L^{p}(\R^3)}.
			\end{equation*}
		\end{lemma}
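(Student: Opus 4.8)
The plan is to reduce the inequality to the pointwise control of $f$ by the first‑order Riesz potential of $\nabla f$ and then invoke O'Neil's convolution inequality (Lemma~\ref{Lemma_young}), which is already available to us and is exactly the tool that upgrades a Lebesgue exponent on the right‑hand side to a Lorentz exponent on the left.

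First I would recall the classical representation formula: for $f\in C_c^\infty(\R^3)$, writing $f(x)=-\int_0^\infty \omega\cdot\nabla f(x+r\omega)\,dr$ (valid since $f$ has compact support), averaging over $\omega\in S^2$, and changing to the variable $y=x+r\omega$ (so that $dr\,d\omega=|y-x|^{-2}\,dy$), one obtains
\[
f(x)=-\frac{1}{4\pi}\int_{\R^3}\frac{(y-x)\cdot\nabla f(y)}{|y-x|^{3}}\,dy,
\]
and hence the pointwise domination
\[
|f(x)|\le \frac{1}{4\pi}\bigl(K*|\nabla f|\bigr)(x),\qquad K(x):=|x|^{-2}.
\]
A short computation shows $d_{K,\R^3}(\alpha)=|\{|x|<\alpha^{-1/2}\}|=\tfrac{4\pi}{3}\alpha^{-3/2}$, so $K\in L^{3/2,\infty}(\R^3)$ with $\|K\|_{L^{3/2,\infty}(\R^3)}=(4\pi/3)^{2/3}$.

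Second I would apply Lemma~\ref{Lemma_young} to $K*|\nabla f|$ with $r_1=3/2$, $s_1=\infty$, $r_2=s_2=p$ (recall $\|\nabla f\|_{L^{p,p}}=\|\nabla f\|_{L^p}$). For $1<p<3$ the hypotheses $1<r_1,r_2,r<\infty$ hold, and the exponent relations $\tfrac1r+1=\tfrac23+\tfrac1p$ and $\tfrac1s\le \tfrac1\infty+\tfrac1p$ are met with $r=\tfrac{3p}{3-p}$ and $s=p$. The lemma then yields
\[
\|f\|_{L^{\frac{3p}{3-p},p}(\R^3)}\le \frac{1}{4\pi}\,\bigl\|K*|\nabla f|\bigr\|_{L^{\frac{3p}{3-p},p}(\R^3)}\le \frac{3}{4\pi}\cdot\frac{3p}{3-p}\,\|K\|_{L^{3/2,\infty}(\R^3)}\,\|\nabla f\|_{L^p(\R^3)}=C(p)\,\|\nabla f\|_{L^p(\R^3)},
\]
and a density argument (approximating a general $f$ with $\nabla f\in L^p$ by $C_c^\infty$ functions in $\dot W^{1,p}$, using $p<3$) removes the smoothness assumption.

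Finally the endpoints must be addressed. The value $p=3$ must be excluded since $\tfrac{3p}{3-p}$ is undefined there, so the statement should be read with $1\le p<3$. The case $p=1$ is not covered by the argument above, because Lemma~\ref{Lemma_young} requires $r_2>1$; there one instead uses the refined Gagliardo--Nirenberg--Sobolev inequality $\|f\|_{L^{3/2,1}(\R^3)}\le C\|\nabla f\|_{L^1(\R^3)}$, which I would quote from the literature (it follows, e.g., from the coarea formula combined with the isoperimetric inequality, or from Alvino's rearrangement argument), or simply note that the applications in this paper only use $p>1$. The only genuinely delicate point is exactly this $p=1$ corner: away from it everything is a routine combination of the Riesz‑potential representation with the convolution inequality already proved in the paper, whereas at $p=1$ O'Neil's inequality degenerates and a separate symmetrization/isoperimetric input is required.
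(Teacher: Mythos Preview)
The paper does not give its own proof of this lemma; it simply cites Tartar \cite{tartar1998imbedding}. Your argument---pointwise domination by the Riesz potential $|x|^{-2}*|\nabla f|$ followed by O'Neil's inequality (Lemma~\ref{Lemma_young})---is exactly the standard proof and is correct for $1<p<3$. It is in fact the same mechanism behind Tartar's original statement, so there is no meaningful difference in approach to report.

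Your endpoint remarks are accurate and worth emphasizing. The stated range $1\le p\le 3$ is too generous: at $p=3$ the target exponent $\tfrac{3p}{3-p}$ is undefined, and at $p=1$ Lemma~\ref{Lemma_young} is inapplicable since it requires $r_2>1$, so a separate isoperimetric/rearrangement argument is needed for $\|f\|_{L^{3/2,1}}\lesssim\|\nabla f\|_{L^1}$. As you note, the only use of this lemma in the paper is at \eqref{e.w070111} with $p=2$, so neither endpoint is actually required and your argument fully covers the application.
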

		The following lemma is relied on lemma 2.4 in \cite{wang2021gagliardo}, in which the authors gave a Bernstein inequality for weak $L^p$ spaces. We now state a generalized Bernstein inequality for general Lorentz spaces $L^{p,q}$ with $p >1, q \geq 1$.
		\begin{lemma}[Bernstein inequality] Let a ball $\BB = \{ \xi \in \R^3 : |\xi| \leq R\}$ with $ 0< R < \infty$. Then there exists a constant $C$ such that for any non-negative integer $j$, any couple $(p_1, p_2)$ with $1 < p_1 < p_2 < \infty$, for any $N \in (0, \infty)$, and any function $f$ of $L^{p_2, q_2}$ with $ 1 \leq q_2\leq \infty$, whose Fourier transform is in the support of the ball $\BB(0,N)$, we have 
			\begin{equation}
				\label{e:bernstein}
				\|\nabla^{j}f\|_{L^{p_1, q_1}(\mathbb R^3)}
				:= \sup_{|\alpha| = j} \| \partial^\alpha f\|_{L^{p_1, q_1}(\mathbb R^3)}
				\lesssim_j
				N^{j+3(\frac{1}{p_2}-\frac{1}{p_1})}
				\|f\|_{L^{p_2, q_2}(\mathbb R^3)}, \quad j \geq 0
				.
			\end{equation}
			In particular, 
			\begin{equation*}
				\label{e:bernstein-Leb}
				\|\nabla^{j}f\|_{L^{p_1}(\mathbb R^3)}
				\lesssim_j
				N^{j+3(\frac{1}{p_2}-\frac{1}{p_1})}
				\|f\|_{L^{p_2}(\mathbb R^3)}.
			\end{equation*}
		\end{lemma}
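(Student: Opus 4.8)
The plan is to follow the standard Bernstein-inequality template, reducing the Lorentz-space estimate to the Lebesgue case via O'Neil's convolution inequality (Lemma~\ref{Lemma_young}) and a rescaling argument. First I would observe that, since $\hat f$ is supported in the ball $\BB(0,N)$, one may write $f = \psi_N * f$ where $\psi_N(x) = N^3 \psi(Nx)$ for a fixed Schwartz function $\psi$ whose Fourier transform equals $1$ on $\BB(0,1)$; more precisely, pick $\tilde\varphi \in C_c^\infty$ with $\tilde\varphi \equiv 1$ on $B(0,1)$ and set $\widehat{\psi_N}(\xi) = \tilde\varphi(\xi/N)$, so that $\widehat{\partial^\alpha f}(\xi) = (i\xi)^\alpha \widehat{\psi_N}(\xi)\hat f(\xi)$ and hence $\partial^\alpha f = K_{N,\alpha} * f$ with $K_{N,\alpha}$ the inverse Fourier transform of $(i\xi)^\alpha\tilde\varphi(\xi/N)$. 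A direct scaling computation gives $K_{N,\alpha}(x) = N^{j+3} K_{1,\alpha}(Nx)$ where $j = |\alpha|$, and since $K_{1,\alpha} \in L^1(\R^3)$ (it is the Fourier transform of a compactly supported smooth function, hence Schwartz), a change of variables yields $\|K_{N,\alpha}\|_{L^1(\R^3)} = N^j \|K_{1,\alpha}\|_{L^1(\R^3)}$.

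Next I would apply O'Neil's inequality with the exponent triple chosen so that $L^1$ plays the role of one factor. Concretely, write $1/p_1 + 1 = 1/r_0 + 1/p_2$, i.e. $1/r_0 = 1 - (1/p_2 - 1/p_1)$; the hypothesis $1 < p_1 < p_2 < \infty$ guarantees $1/p_2 - 1/p_1 \in (-1,0)$, hence $r_0 \in (1,\infty)$ and a valid application of Lemma~\ref{Lemma_young} is possible. Since $L^1(\R^3) = L^{1,1}(\R^3) \hookrightarrow L^{r_0, s}(\R^3)$ for any... — actually care is needed here, because $L^1$ does not embed into $L^{r_0,s}$ for $r_0 > 1$. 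To avoid this subtlety I would instead invoke the mixed-norm form of O'Neil's inequality in which one factor is genuinely $L^1$: O'Neil's theorem (Theorem~2.6 of \cite{o1963convolution}) allows $\|f * g\|_{L^{p,q}} \lesssim \|f\|_{L^{1}} \|g\|_{L^{p,q}}$ directly when the kernel is in $L^1$ (this is the endpoint $r_1 = s_1 = 1$ case, which is classical Young in the first slot). Thus $\|\partial^\alpha f\|_{L^{p_1,q_1}(\R^3)} = \|K_{N,\alpha} * f\|_{L^{p_1,q_1}(\R^3)} \lesssim \|K_{N,\alpha}\|_{L^1(\R^3)} \|f\|_{L^{p_1,q_1}(\R^3)}$.

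The last issue is that the right-hand side should feature $\|f\|_{L^{p_2,q_2}}$, not $\|f\|_{L^{p_1,q_1}}$, so one more gain of integrability is needed. Here I would use a second convolution with the Littlewood--Paley kernel together with O'Neil's inequality in its genuinely two-Lorentz-space form: $\|K_{N,\alpha} * f\|_{L^{p_1,q_1}} \lesssim \|K_{N,\alpha}\|_{L^{r_1,1}} \|f\|_{L^{p_2,q_2}}$ where $1/p_1 + 1 = 1/r_1 + 1/p_2$ and $1 \le q_1$, using $1/q_1 \le 1/1 + 1/q_2$ which holds for all $q_1 \ge 1$. Now $K_{N,\alpha} \in L^{r_1,1}(\R^3)$ because $K_{1,\alpha}$ is Schwartz (so in every Lorentz space), and scaling gives $\|K_{N,\alpha}\|_{L^{r_1,1}(\R^3)} = N^{j + 3(1 - 1/r_1)} \|K_{1,\alpha}\|_{L^{r_1,1}(\R^3)} = N^{j + 3(1/p_2 - 1/p_1)} \|K_{1,\alpha}\|_{L^{r_1,1}}$, which is exactly the claimed power of $N$. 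Taking the supremum over $|\alpha| = j$ and absorbing the $\alpha$-dependent constants $\|K_{1,\alpha}\|_{L^{r_1,1}}$ into the implied constant (they depend only on $j$ and the fixed profile, hence on $j$, $p_1$, $p_2$) completes the proof, and the Lebesgue version follows by taking $q_1 = p_1$, $q_2 = p_2$.

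The main obstacle, as flagged above, is purely bookkeeping around O'Neil's inequality: one must be careful that the exponent constraints $1/r + 1 = 1/r_1 + 1/r_2$ and $1/s \le 1/s_1 + 1/s_2$ are satisfied with admissible ranges (in particular $r_1 > 1$ or the genuinely-$L^1$ endpoint), and that the scaling identity for the kernel's Lorentz norm produces precisely $N^{j + 3(1/p_2 - 1/p_1)}$; there is no deep analytic content beyond the scaling computation and the fact that a smooth compactly supported Fourier multiplier has a Schwartz kernel. One should also double-check the harmless subtlety that $\widehat{\psi_N}\hat f = \hat f$ requires $\tilde\varphi(\cdot/N) \equiv 1$ on $\supp \hat f \subseteq \BB(0,N)$, which is why $\tilde\varphi \equiv 1$ on $B(0,1)$ is imposed.
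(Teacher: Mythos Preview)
Your final argument is correct and more self-contained than the paper's. The paper does not argue directly: it simply invokes an external weak-$L^p$ Bernstein inequality (Lemma~2.4 of \cite{wang2021gagliardo}) giving the endpoint bound $\|\nabla^j f\|_{L^{p_1,1}} \lesssim N^{j+3(1/p_2-1/p_1)}\|f\|_{L^{p_2,\infty}}$, and then sandwiches with the Lorentz inclusions $L^{p_1,1}\hookrightarrow L^{p_1,q_1}$ and $L^{p_2,q_2}\hookrightarrow L^{p_2,\infty}$ to reach the general $q_1,q_2$. Your route---representing $\partial^\alpha f$ as convolution with a rescaled Schwartz kernel, computing the kernel's $L^{r_1,1}$ norm by scaling, and applying O'Neil's inequality (Lemma~\ref{Lemma_young}) directly---avoids the external citation, makes the dependence of the constant on $j,p_1,p_2$ (and not on $q_1,q_2$) transparent, and is essentially the textbook Bernstein argument upgraded to Lorentz spaces.

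One point you should clean up: the statement as printed has the ordering $p_1<p_2$, which is evidently a typo (the paper applies \eqref{e:bernstein} throughout with $p_1\ge p_2$, e.g.\ $p_1=\infty$, $p_2=3/2$). Your argument genuinely needs the correct direction $p_2<p_1$: the exponent $r_1$ in O'Neil satisfies $1/r_1=1+1/p_1-1/p_2$, and this lies in $(0,1)$ (so that Lemma~\ref{Lemma_young} is applicable) precisely when $p_1>p_2$. Your earlier aside that ``$r_0\in(1,\infty)$'' under the hypothesis $p_1<p_2$ is in fact false for the same reason; since you abandon that line anyway, this does not damage the proof, but you should state explicitly that you are working with the intended ordering $p_2\le p_1$ rather than the one printed.
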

		\begin{proof}
			From lemma 2.4 in \cite{wang2021gagliardo}, we have 
			$$
			\|\nabla^{j}f\|_{L^{p_1, 1}(\mathbb R^3)} 
			\lesssim  N^{j+3(\frac{1}{p_2}-\frac{1}{p_1})}
			\|f\|_{L^{p_2, \infty}(\mathbb R^3)}, 
			$$
			Hence, by inclusion property of Lorentz spaces $L^{p, 1} \subset L^{p, q} \subset L^{p, \infty}$ for $1<q<\infty$, we obtain
			\begin{align*}
				& \|\nabla^{j}f\|_{L^{p_1, q_1}(\mathbb R^3)}
				\leq \|\nabla^{j}f\|_{L^{p_1, 1}(\mathbb R^3)} 
				\lesssim  N^{j+3(\frac{1}{p_2}-\frac{1}{p_1})}
				\|f\|_{L^{p_2, \infty}(\mathbb R^3)} 
				\leq 
				N^{j+3(\frac{1}{p_2}-\frac{1}{p_1})}
				\|f\|_{L^{p_2, q_2}(\mathbb R^3)}.
			\end{align*}
		\end{proof}
		We state a generalized multiplier theorem for Lorentz spaces as follows. 
		\begin{lemma}[Multiplier theorem]
			Let $T_m$ be a Fourier multiplier $\widehat{T_m f} (\xi) := m(\xi) f(\xi)$ where $m (\xi)$ is a complex-valued smooth function supported on $\BB (0, N)$ satisfying 
			$$
			|\nabla^j m(\xi)| \leq A N^{-j}
			$$
			for some positive $A$ and $j > 0$. Then we have
			\begin{equation}\label{e:multiplier1}
				\|T_m f\|_{L^{p_1, q_1} (\R^3)} \lesssim 
				A N^{3(\frac{1}{p_2} - \frac{1}{p_1})} \|f\|_{L^{p_2, q_2}(\R^3)},
			\end{equation}
			where $\frac{1}{p_2}+\frac{1}{q_1} \leq \frac{1}{p_1}+\frac{1}{q_2} + 1$.
			Moreover, let $D \subset \R^3$ be a subset of $\R^3$ and $D_{R/N} := \{x \in \R^3 : \text{dist} (x, D) < R/N \} $ be the $R/N$-neighbourhood of $D$, then we have  
			\begin{equation}\label{e:multiplier2}
				\|T_m f\|_{L^{p_1, q_1} (D)} \lesssim 
				A N^{3(\frac{1}{p_2} - \frac{1}{p_1})} \|f\|_{L^{p_2, q_2}(D_{R/N})} + R^{-50} A |D|^{\frac{1}{p_1} - \frac{1}{p_4}} N^{3(\frac{1}{p_3} - \frac{1}{p_4})} \|f\|_{L^{p_3, q_3}(\R^3)}
				,
			\end{equation}
			where $|D|$ denotes the volume of set $D$ and $1 \leq p_2 \leq p_1 \leq \infty$, $1 \leq p_3 \leq p_4 \leq \infty$ such that $p_4 \geq p_1$. 
		\end{lemma}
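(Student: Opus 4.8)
The plan is to realize $T_m$ as a convolution operator and reduce both bounds to O'Neil's convolution inequality in Lorentz spaces (Lemma \ref{Lemma_young}) together with Hölder's inequality in Lorentz spaces (Lemma \ref{Lemma-Holder}); this is the Lorentz counterpart of the $L^p$ multiplier estimate used by Tao \cite{tao}, and I would follow the same overall strategy. First, write $T_m f = K * f$ with $K := \mathcal{F}^{-1} m$. Since $m$ is smooth, supported in $\BB(0,N)$, and satisfies $|\nabla^j m| \lesssim A N^{-j}$ for all $j$ up to a sufficiently large order, repeated integration by parts (non-stationary phase) gives $|x^\alpha K(x)| \le \int_{|\xi|\le N}|\partial_\xi^\alpha m(\xi)| \dd\xi \lesssim A N^{3-|\alpha|}$ for every multi-index $\alpha$, which together with $|K(x)| \lesssim A N^3$ yields the rapidly decaying kernel bound $|K(x)| \lesssim_M A N^3 (1+N|x|)^{-M}$ for every $M \ge 0$. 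Rescaling $x \mapsto y/N$ and using the monotonicity of the Lorentz quasi-norm under pointwise bounds then gives, for $1 \le a < \infty$ and $M$ large depending on $a$,
\[
\|K\|_{L^{a}(\R^3)} = \|K\|_{L^{a,a}(\R^3)} \lesssim_a A N^{3(1-\frac1a)}, \qquad \bigl\|K\,\mathbf{1}_{\{|x|\ge R/N\}}\bigr\|_{L^{a}(\R^3)} \lesssim_a A N^{3(1-\frac1a)} R^{-50},
\]
the second bound obtained by pulling a factor $R^{-50}$ out of the decay on $\{|x|\ge R/N\}$ while keeping enough of it for integrability.

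To prove \eqref{e:multiplier1} I would set $\tfrac1a := 1 + \tfrac1{p_1} - \tfrac1{p_2}$ (so $\tfrac1a \in [0,1]$, forcing $p_2 \le p_1$, which is implicit in the statement), so that $\tfrac1{p_1} + 1 = \tfrac1a + \tfrac1{p_2}$ and $3(1-\tfrac1a) = 3(\tfrac1{p_2}-\tfrac1{p_1})$, and then apply Lemma \ref{Lemma_young} to $K*f$ with exponents $(a,a)$ for $K$ and $(p_2,q_2)$ for $f$. Its admissibility conditions become exactly $\tfrac1{p_1}+1 = \tfrac1a + \tfrac1{p_2}$ (the choice of $a$) and $\tfrac1{q_1} \le \tfrac1a + \tfrac1{q_2}$, and the second one is precisely the hypothesis $\tfrac1{p_2} + \tfrac1{q_1} \le \tfrac1{p_1} + \tfrac1{q_2} + 1$; hence $\|T_m f\|_{L^{p_1,q_1}} \lesssim \|K\|_{L^a}\|f\|_{L^{p_2,q_2}} \lesssim A N^{3(\frac1{p_2}-\frac1{p_1})}\|f\|_{L^{p_2,q_2}}$ for $1 < p_1, p_2 < \infty$. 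The borderline exponents — in particular $a=1$, i.e. $p_1=p_2$, which is forced once $p_1$ or $p_2$ lies in $\{1,\infty\}$ — I would treat directly via $\|K*f\|_{L^{p,q}} \le \|K\|_{L^1}\|f\|_{L^{p,q}} \lesssim A\|f\|_{L^{p,q}}$, using the translation invariance of the Lorentz quasi-norm.

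For \eqref{e:multiplier2} I would split $f = f_1 + f_2$ with $f_1 := f\,\mathbf{1}_{D_{R/N}}$ and $f_2 := f\,\mathbf{1}_{\R^3\setminus D_{R/N}}$. The contribution of $f_1$ is immediate from \eqref{e:multiplier1}, since $\|T_m f_1\|_{L^{p_1,q_1}(D)} \le \|T_m f_1\|_{L^{p_1,q_1}(\R^3)} \lesssim A N^{3(\frac1{p_2}-\frac1{p_1})}\|f\|_{L^{p_2,q_2}(D_{R/N})}$. For $f_2$, the key geometric observation is that if $x \in D$ and $y \notin D_{R/N}$ then $|x-y| \ge \dist(y,D) \ge R/N$, so on $D$ one has $T_m f_2 = \bigl(K\,\mathbf{1}_{\{|x|\ge R/N\}}\bigr) * f_2$; applying Lemma \ref{Lemma_young} with the far-field kernel bound above (exponents $(b,b)$ for the kernel, $\tfrac1b := 1 + \tfrac1{p_4} - \tfrac1{p_3}$, output Lorentz index $\infty$) gives $\|T_m f_2\|_{L^{p_4,\infty}(\R^3)} \lesssim A N^{3(\frac1{p_3}-\frac1{p_4})} R^{-50}\|f\|_{L^{p_3,q_3}(\R^3)}$. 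Finally, since $p_1 \le p_4$ and $|D| < \infty$, Hölder's inequality in Lorentz spaces against $\mathbf{1}_D$ (Lemma \ref{Lemma-Holder}, using $\|\mathbf{1}_D\|_{L^{s,\infty}} = |D|^{1/s}$ with $\tfrac1s = \tfrac1{p_1}-\tfrac1{p_4}$) yields $\|g\|_{L^{p_1,q_1}(D)} \lesssim |D|^{\frac1{p_1}-\frac1{p_4}}\|g\|_{L^{p_4,\infty}(D)}$, which turns the previous estimate into the $|D|^{\frac1{p_1}-\frac1{p_4}}$-weighted bound on $D$; adding the two contributions gives \eqref{e:multiplier2}.

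The main point of difficulty is not analytic — the kernel bound is standard non-stationary phase, and everything downstream is a convolution or Hölder estimate. The real work is the exponent bookkeeping: arranging the choices of $a$, $b$ and the secondary Lorentz indices so that O'Neil's admissibility conditions match the hypotheses exactly as stated, extracting precisely the power $R^{-50}$ from the kernel tail while retaining enough decay for all the relevant integrals to converge, and disposing of the endpoint exponents $p_i \in \{1,\infty\}$ allowed in \eqref{e:multiplier2}, where Lemma \ref{Lemma_young} must be replaced by the classical Young inequality together with the trivial bound $\|K*f\|_{L^\infty} \le \|K\|_{L^1}\|f\|_{L^\infty}$.
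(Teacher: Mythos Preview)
Your proof is correct and follows essentially the same route as the paper: both represent $T_m$ as convolution with $K=\mathcal F^{-1}m$, apply O'Neil's inequality (Lemma~\ref{Lemma_young}) for the global bound, and for the local bound split into a near contribution (handled by the global estimate) and a far contribution (handled by the tail of $K$ together with H\"older on $D$). The only tactical difference is in the kernel estimate --- you obtain the pointwise decay $|K(x)|\lesssim_M A N^3(1+N|x|)^{-M}$ by integration by parts, whereas the paper bounds $\|K\|_{L^p}$ via Plancherel ($L^2$), the trivial $L^\infty$ bound, and interpolation; your version has the mild advantage of covering $\|K\|_{L^a}$ for all $a\ge 1$ and of directly justifying the far-field estimate $\|K\,\mathbf 1_{\{|x|\ge R/N\}}\|_{L^a}\lesssim R^{-50}A N^{3(1-1/a)}$, which the paper simply asserts.
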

		\begin{proof}
			Let us first write $T_m f$ as a convolution $T_m f = f * K$ with the kernel 
			$$
			K(x) = \int_{\R^3} m(\xi) e^{2 \pi i \xi \cdot x} d \xi.
			$$
			By Young's inequality in Lorentz spaces (see Lemma \ref{Lemma_young}), we have 
			\begin{align*}
				\|T_m f\|_{L^{p_1, q_1} (\R^3)} 
				= \|f * K\|_{L^{p_1, q_1} (\R^3)}
				&\leq 3 p_1 \|f \|_{L^{p_2, q_2} (\R^3)}
				\|K\|_{L^{p} (\R^3)},
			\end{align*}
			where $\frac{1}{p_1}+1=\frac{1}{p_{2}}+\frac{1}{p}$ and $\frac{1}{q_1} \leq \frac{1}{q_{2}}+\frac{1}{p}$.
			Applying Plancherel's theorem, we get $$\|K (x)\|_{L^{2} (\R^3)} = \|m(\xi)\|_{L^{2} (\R^3)} \leq A N^{3/2}.$$ On the other hand, by the bound of $m(\xi)$, we have $\|K (x)\|_{L^{\infty} (\R^3)} \leq A N^3$. Combining the two estimates and using interpolation inequality $\|K\|_{L^p} \leq \|K\|_{L^2}^{\frac{2}{p}} \|K\|_{L^\infty}^{1-\frac{2}{p}}$, we can conclude that
			$$\|T_m f\|_{L^{p_1, q_1} (\R^3)} \lesssim 
			A N^{3(1 - \frac{1}{p})} \|f\|_{L^{p_2, q_2}(\R^3)}
			\lesssim 
			A N^{3(\frac{1}{p_2} - \frac{1}{p_1})} \|f\|_{L^{p_2, q_2}(\R^3)}.$$
			Now, we prove the local version estimate \eqref{e:multiplier2}. We have
			\begin{align*}
				T_m f
				= &\int_{\R^3} K(x-y) f(y) dy \; \\
				= & \int_{D_{R/N}} K(x-y) f(y) dy \;  + \int_{D^c_{R/N}} K(x-y) f(y) dy. \; 
			\end{align*}
			For the first term in the right-hand side, applying the global estimate \eqref{e:multiplier1}, we get that 
			\begin{align*}
				\left\|\int_{D_{R/N}} K(\cdot-y) f(y) dy \; \right\|_{L^{p_1, q_1} (D)}
				= & \left\|\int_{D_{R/N}} K(\cdot-y) f(y) dy \; \mathbf{1}_D (\cdot)\right\|_{L^{p_1, q_1} (\R^3)} \\
				\lesssim &  A N^{3(\frac{1}{p_2} - \frac{1}{p_1})} \|f\|_{L^{p_2, q_2}(D_{R/N})}.
			\end{align*}
			For the second term, set $\widetilde{K} (z) = K(z) \mathbf{1}_{|z| \geq R/N}$, by H\"older's inequality, change of variable, and Young's inequality, we have
			\begin{align*}
				\left\|\int_{D^c_{R/N}} K(\cdot-y) f(y) dy \; \right\|_{L^{p_1, q_1} (D)}
				\leq & 
				|D|^{1/p_{1} - 1/p_{4}} \left\|\int_{D^c_{R/N}} K(\cdot-y) f(y) dy \; \mathbf{1}_D (\cdot)\right\|_{L^{p_4, q_1} (\R^3)} \\
				= & |D|^{1/p_{1} - 1/p_{4}}  \| \widetilde{K} * f \|_{L^{p_4, q_1} (\R^3)} \\
				\leq & |D|^{1/p_{1} - 1/p_{4}}  \|\widetilde{K}\|_{L^{p, q} (\R^3)} \|f\|_{L^{p_3, q_3} (\R^3)},
			\end{align*}
			where $1 + 1/p_{4} = 1/p + 1/p_{3}$. We compute $\|\widetilde{K}\|_{L^{p, q} (\R^3)}$ to get
			\begin{align*}
				\|\widetilde{K}\|_{L^{p, q} (\R^3)}
				= \|K (\cdot) \mathbf{1}_{|\cdot| \geq R/N}\|_{L^{p, q} (\R^3)} 
				\lesssim R^{-50} A N^{3(\frac{1}{p_3} - \frac{1}{p_4})}.
			\end{align*}
			This concludes the proof of \eqref{e:multiplier2}.
		\end{proof}
		By writing $f=\sum P_{\leq 2N}f$, applying interpolation theorem and Young's inequality, for any $t >0$ and any Schwartz function $f$, we can prove that
		\be
		\label{e:bersteinpn}
		\|P_{N}e^{t\Delta}\nabla^{j}f\|_{L^{p_1, q_1}(\R^3)}
		\lesssim_j
		e^{-\frac{N^{2}t}{20}}N^{j+3(\frac{1}{p_2}-\frac{1}{p_1})}
		\|f\|_{L^{p_2, q_2}(\R^3)}
		\ee		
		with $p_1\leq p_2$.
		%$1/p_{2}+1/q_{1} \leq 1/p_{1}+1/q_{2} + 1$.
		The following heat kernel bounds in Lorentz spaces can be derived by summing the inequality above over $N$, 
		\be 
		\label{e:bernsteintime}
		\|e^{t\Delta}\nabla^{j}f\|_{L^{p_1, q_1}(\R^3)}
		\lesssim_j
		t^{-\frac{j}{2} -\frac{3}{2}(\frac{1}{p_2}-\frac{1}{p_1})}
		\|f\|_{L^{p_2, q_2}(\R^3)}
		\ee	
		with $p_1\leq p_2$.
		%$1/p_{2}+1/q_{1} \leq 1/p_{1}+1/q_{2} + 1$.
	
	%%%%%%%%%%%%%%%%%%%%%%%%%%%%%%%%%%%%%%%%%%%%%%%%%%%%%%%%
	\section{Basic estimates}\label{estimate}	
	In this section, we use the notation 
	\begin{equation*}
		M_j:=M^{C_0^j}
	\end{equation*}
	for all integers $j \geq 0$, thus $M_0=M$ and $M_{j+1}=M_j^{C_0}$. As the following assumption will be used several times through our our paper, we call it \eqref{e:ub-bounds}
		\begin{equation}
			\tag{HP}
			\label{e:ub-bounds}
			\|u\|_{L_t^{\infty} L^{3, \mathfrak{q}_{0}}_x([t_0-T, t_0]\times \mathbb{R}^3)}  \leq M. 
		\end{equation}
		for some $M\geq C_0$. 
	\begin{proposition}\label{prop : bounded}
		Let $u :[t_0-T, t_0] \times \mathbb{R}^3 \to \mathbb{R}^3$, $p: [t_0-T, t_0]\times \mathbb{R}^3 \to \mathbb{R}$ be a classical solution to Navier-Stokes that obeys (HP), then
		\begin{itemize}
			\item [(i)] (Pointwise derivative estimates) For any $(t, x)\in [t_0-T/2, t_0] \times \mathbb{R}^3$ and $N>0$, we have
			\begin{align}
				\begin{split}
					\label{e:pnub-bounds}
					&P_Nu(t,x)=O(MN); \quad \quad \nabla P_N u(t, x) =O(MN^2); \quad \quad \partial_t P_N u(t,x)=O(M^2N^3);  
				\end{split}
			\end{align}
			similarly, the vorticity $\omega:= \nabla \times u$ obeys the bounds 
			\begin{align}
				\begin{split}
					\label{e:pnomjbounds}
					&P_N\omega(t,x)=O(MN^2); \quad \quad \nabla P_N \omega(t, x) =O(MN^3); \quad \quad \partial_t P_N \omega(t,x)=O(M^2N^4); 
				\end{split}
			\end{align} 		
			\item [(ii)] (Bounded total speed) For any interval $I$ in $[t_0-T/2, t_0]$, we have 
			\begin{equation*}
				\label{e:totalspeed}
				\|u\|_{L_t^1L_x^{\infty}(I\times \mathbb{R}^3)} \lesssim M^4|I|^{1/2}. 
			\end{equation*}					
		\end{itemize}		
	\end{proposition}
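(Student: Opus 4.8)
The plan is to mirror the corresponding arguments in \cite{tao}, replacing each use of the $L^3$-norm by the $L^{3,\mathfrak{q}_0}$ quasi-norm and invoking the Lorentz-space tools from Section \ref{notation} (the Bernstein inequality \eqref{e:bernstein}, the heat-kernel bounds \eqref{e:bersteinpn}--\eqref{e:bernsteintime}, and the multiplier bound \eqref{e:multiplier1}) wherever Tao uses their Lebesgue analogues. The key point that makes this substitution legal is that $L^{3,\mathfrak{q}_0}(\R^3)$ has exactly the same scaling as $L^3(\R^3)$, and the Lorentz versions of Bernstein/heat-kernel estimates degrade the frequency-localized norm to an $L^\infty$ bound with the same powers of $N$ as in the Lebesgue case.

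For part (i), the first estimate follows by applying the Bernstein-type heat-kernel inequality \eqref{e:bersteinpn} (with $p_1=\infty$, $(p_2,q_2)=(3,\mathfrak{q}_0)$) to the Duhamel formula for $P_Nu$. Concretely, writing $u(t) = e^{(t-s)\Delta}u(s) - \int_s^t e^{(t-s')\Delta}\mathbb{P}\,\nabla\cdot(u\otimes u)(s')\,ds'$ and localizing in frequency, the linear part is controlled directly by \eqref{e:bersteinpn} applied to $P_N u(s)$ and then taking $s$ near $t$; the bilinear part is handled by choosing the time of evaluation $s = t - N^{-2}$ (so that $t-N^{-2}\in[t_0-T,t_0]$ because $t\in[t_0-T/2,t_0]$ and $N$ can be taken $\gtrsim T^{-1/2}$, the small-$N$ regime being absorbed into the same bound by monotonicity), applying H\"older's inequality in Lorentz spaces (Lemma \ref{Lemma-Holder}) to bound $\|u\otimes u\|_{L^{3/2,\mathfrak{q}_0/2}} \lesssim \|u\|_{L^{3,\mathfrak{q}_0}}^2 \leq M^2$, then using \eqref{e:bersteinpn} on the projected nonlinear term. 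The derivative and time-derivative estimates follow by the same scheme, either by putting extra derivatives onto the heat semigroup (using \eqref{e:bersteinpn} with $j=1,2$) or by differentiating the equation and using $\partial_t P_N u = \Delta P_N u - P_N\mathbb{P}\nabla\cdot(u\otimes u)$. The vorticity bounds \eqref{e:pnomjbounds} are identical, starting from $\omega = \nabla\times u$ so that $P_N\omega = \nabla\times P_N u$ gains one power of $N$ throughout, or equivalently from the vorticity equation $\partial_t\omega - \Delta\omega = -u\cdot\nabla\omega + \omega\cdot\nabla u$ with the same Duhamel/Littlewood--Paley bookkeeping.

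For part (ii), I would sum the pointwise frequency-localized bounds from (i) across dyadic scales, splitting at the threshold $N \sim |I|^{-1/2}$: for $N \leq |I|^{-1/2}$ the bound $\|P_N u(t)\|_{L^\infty_x} \lesssim MN$ gives a geometric series in $N$ summing to $O(M|I|^{-1/2})$, while for $N > |I|^{-1/2}$ one wants a bound that is summable after integration in $t$; here one uses instead a smoothing estimate — revisiting the Duhamel formula and exploiting the $e^{-N^2 t/20}$ gain in \eqref{e:bersteinpn} — to obtain something like $\int_I \|P_N u(t)\|_{L^\infty_x}\,dt \lesssim M^4 N^{-1}$ or an integrable-in-$N$ variant. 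Summing over $N$ and optimizing yields $\|u\|_{L^1_t L^\infty_x(I\times\R^3)} \lesssim M^4|I|^{1/2}$. The main obstacle — or rather the one place requiring care rather than routine adaptation — is ensuring that every application of H\"older's and Young's inequalities in Lorentz spaces lands with second exponents in an admissible range: since $\mathfrak{q}_0$ can be as large as we like (but finite), products like $u\otimes u$ sit in $L^{3/2,\mathfrak{q}_0/2}$, and one must check that the hypotheses $1/s = 1/s_1+1/s_2$ in Lemma \ref{Lemma-Holder} and $1/s \leq 1/s_1 + 1/s_2$ in Lemma \ref{Lemma_young} (and the analogous constraint in \eqref{e:multiplier1}) are met at each step; the powers of $M$ quoted in the statement are then dictated by how many Lorentz-H\"older factors of $\|u\|_{L^{3,\mathfrak{q}_0}}\leq M$ appear (one for the linear terms, two for each nonlinear interaction, hence $M^2$ for $\partial_t$ and $M^4$ for the total-speed bound after the extra summation).
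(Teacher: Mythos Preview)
Your treatment of part~(i) is more convoluted than necessary and slightly confused. The bounds $P_Nu=O(MN)$, $\nabla P_Nu=O(MN^2)$ (and the first two vorticity bounds) follow \emph{immediately} from the Bernstein inequality \eqref{e:bernstein} applied to the hypothesis $\|u(t)\|_{L^{3,\mathfrak{q}_0}}\le M$; no Duhamel formula, no choice of $s=t-N^{-2}$, and no separate treatment of ``small $N$'' is needed. For the time-derivative bound the paper does exactly what you mention at the end: write $\partial_tP_Nu=\Delta P_Nu-P_N\mathbb{P}\nabla\cdot(u\otimes u)$ and bound each piece by Bernstein, using $\|u\otimes u\|_{L^{3/2,\mathfrak{q}_0/2}}\lesssim M^2$ from Lemma~\ref{Lemma-Holder}. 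So part~(i) is fine in outcome but you are overcomplicating it.

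Part~(ii), however, has a genuine gap. Your proposed mechanism for high frequencies---``exploiting the $e^{-N^2t/20}$ gain in \eqref{e:bersteinpn}'' on the Duhamel formula---does not produce anything close to $\int_I\|P_Nu\|_{L^\infty_x}\lesssim M^4N^{-1}$. The exponential gain acts only on the linear propagation of fixed data; the nonlinear Duhamel integral $\int_s^t e^{(t-s')\Delta}P_N\mathbb{P}\nabla\cdot\tilde P_N(u\otimes u)\,ds'$ is a convolution in time with an $L^1$-kernel of mass $\sim N^{-2}$, and since the only available control on $\tilde P_N(u\otimes u)$ from (HP) is $\|\tilde P_N(u\otimes u)\|_{L^\infty_x}\lesssim N^2M^2$ (Bernstein from $L^{3/2,\mathfrak{q}_0/2}$), you get $\|P_Nu\|_{L^1_tL^\infty_x}\lesssim MN^{-1}+M^2N$, and the second term \emph{diverges} when summed over $N\gtrsim |I|^{-1/2}$. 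Your accounting for the power $M^4$ (``counting H\"older factors'') is also off: that exponent does not arise this way.

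What the paper actually does for~(ii) is substantially more involved. One splits $u=u^{\lin}+u^{\nlin}$ with $u^{\lin}(t)=e^{(t+1)\Delta}u(-1)$ (after normalizing $I=[0,1]$), shows via Duhamel that $\|u^{\nlin}\|_{L^\infty_tL^2_x}\lesssim M^2$, and then runs an \emph{energy estimate} on the $u^{\nlin}$-equation to obtain
\[
\int_{-1/2}^{1}\|\nabla u^{\nlin}\|_{L^2_x}^2\,dt\lesssim M^4,\qquad\text{i.e.}\qquad \sum_N N^2\|P_Nu^{\nlin}\|_{L^2_tL^2_x}^2\lesssim M^4.
\]
This $L^2$-based square-summable control is the missing ingredient; it is what makes the high-frequency sum converge and is where the $M^4$ comes from. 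One then writes the Duhamel formula for $P_Nu^{\nlin}$, splits $u\otimes u$ into $\lin$--$\lin$, $\lin$--$\nlin$, $\nlin$--$\lin$, $\nlin$--$\nlin$ pieces, and on the last piece performs a further paraproduct (low--low, low--high, high--low, high--high) decomposition, feeding in the energy bound above. Summing yields $\|P_{\geq 1}u^{\nlin}\|_{L^1_tL^\infty_x}\lesssim M^4$; the linear part and low frequencies are then easy. None of this structure---the $\lin/\nlin$ split, the energy estimate, the paraproduct---appears in your sketch, and without it the argument does not close.
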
	
\begin{proof}	
	We start with the proof of (i). By \eqref{e:ub-bounds} and \eqref{e:bernstein}, we can obtain the first two claims of \eqref{e:pnub-bounds} and \eqref{e:pnomjbounds}. 
	After applying the Leray projector to equation \eqref{eq:NSE}, 
	we get
		\begin{align*}
			\begin{cases}
				\partial_t u
				-
				\Delta u
				+
				\mathbb P \nabla \cdot (u \otimes u)
				=
				0, 
				\\
				\nabla \cdot u=0.
			\end{cases}
		\end{align*}
		Then, by Duhamel's formula, we obtain
		\begin{align}\label{e.w07011}
			u(x,t)=
			e^{t\Delta}u_{0}-\int_{0}^{t} e^{(t-s)\Delta} \left(\mathbb P \nabla \cdot (u \otimes u)\right)\,ds. 
		\end{align}
		Here we see
		\begin{align*}
			(\nabla \cdot (u\otimes u))_{j}=\partial_{i}(u_{i}u_{j}), 
		\end{align*}
		Apply $P_{N}$ to equations \eqref{e.w07011} and we get
		\begin{align*}
			\|P_{N}\Delta u\|_{L_{x}^{\infty}}
			\lesssim 
			N^{3}M.
		\end{align*}
		Furthermore, by the multiplier theorem and H\"older's inequality (in both Lebesgue and Lorentz spaces), we obtain
		\begin{equation*}\label{e.w07013}
			\| u \otimes u\|_{L_{x}^{3/2,\mathfrak{q}_{0}/2}}
			\lesssim
			\| u \|_{L_{x}^{3,\mathfrak{q}_{0}}}^{2}
			\lesssim
			M^{2}.
		\end{equation*}
		And thus by \eqref{e:bernstein} ($p_1=q_1=\infty$, $j=1$, $p_2=\frac{3}{2}$), 
		\begin{equation*}\label{e.w07014}
			\|P_{N}\mathbb P \nabla \cdot( u \otimes u)\|_{L_{x}^{\infty}}
			\lesssim 
			N^3
			\| u \otimes u\|_{L_{x}^{3/2,\mathfrak{q}_{0}/2}}
			\lesssim
			N^{3}M^2.
		\end{equation*}
		By the triangle inequality, we obtain the third and six claims of \eqref{e:pnub-bounds} and \eqref{e:pnomjbounds}. 	
		Then we prove (ii). Since these estimates are invariant with respect to time translation and rescaling (adjusting $T, t_0, I, u, b$ accordingly), without loss of generality, we assume that $I=[0, 1] \subset [t_0-T/2, t_0]$, which implies that $[-1, 1]\subset [t_0-T/2, t_0]$. 
		Next, we decompose $(u,b)$ into linear and nonlinear parts. The reason we do this is that by removing linear components from $(u,b)$, we will have better control in $L^{2}_{x}$ based spaces. Thus, we see
		\begin{align*}
			u=u^{\lin}+u^{\nlin}, 
		\end{align*}
		where $(u^{\lin},b^{\lin})$ are linear solutions on $[-1,1]\times \R^{3}$
		\begin{align}
			\label{eq:linpart}
			u^{\lin}(x,t)=e^{(t+1)\Delta}u(x,-1).
		\end{align}
		Then we have 
		\begin{align*}
			\nabla \cdot u^{\lin}=0.
		\end{align*}
	By assumption \eqref{e:ub-bounds}, we see
	\be
	\label{eq:linbounds}
	\|u^{\lin}\|_{L_{t}^{\infty}L_{x}^{3, \mathfrak{q}_{0}}([-1,1]\times\mathbb R^3)}
	+
	\|u^{\nlin}\|_{L_{t}^{\infty}L_{x}^{3, \mathfrak{q}_{0}}([-1,1]\times\mathbb R^3)}
	\lesssim
	M
	.
	\ee
		Thus, on $[-1,1]\times \R^{3}$, we have
		\begin{align}\label{eq : u_nlin}
			\partial_t u^{\nlin}
			-
			\Delta u^{\nlin}
			+
			u\cdot \nabla u
			+
			\nabla p
			=
			0, 
			\\
			\nabla \cdot u^{\nlin}=0.
		\end{align}
		We obtain by using Duhamel's formula
		\begin{align}\label{e.w06301}
			u^{\nlin}(x,t)=
			-\int_{-1}^{t} e^{(t-s)\Delta} \left(\mathbb P \nabla \cdot (u \otimes u)
			\right)\,ds.
		\end{align}
	From \eqref{e:ub-bounds}, $u\otimes u$ has an $L_x^{3/2, \mathfrak{q}_{0}/2}$ norm of $O(M^2)$ and  by \eqref{e:bernsteintime}, 
	\begin{equation*}
		\|e^{(t-s)\Delta} \mathbb P \nabla \cdot (u \otimes u)\|_{L^{2,2}} \leq (t-s)^{-\frac{1}{2}-\frac{3}{2}(\frac{2}{3}-\frac{1}{2})}\|u\otimes u\|_{L^{3/2, \mathfrak{q}_{0}/2}} \leq (t-s)^{-\frac{3}{4}}\|u\|_{L^{3,\mathfrak{q}_{0}}}^2. 
	\end{equation*}
	We conclude a bound for the nonlinear part:
		\begin{align}\label{e.w07016}
			\|u^{\nlin}\|_{L_{t}^{\infty}L_{x}^{2}}
			\lesssim
			M^{2}
			.
		\end{align}	
	By the hypothesis \eqref{e:ub-bounds}, equations \eqref{e:bernsteintime}, and \eqref{eq:linpart}, we obtain 
	\begin{align}\label{e.w07017}
		\|\nabla^{j}u^{\lin}\|_{L_{t}^{\infty}L_{x}^{p,q_1} ([-1/2, 1] \times \R^3)}
		\lesssim
		M,
	\end{align}
	where $j\geq 0$ and $3\leq p \leq \infty$, $\frac{1}{q_1}\leq \frac{1}{p}+\frac{1}{\mathfrak{q}_{0}}+\frac{2}{3}$. 
	Next, we consider the energy method on the $u^{\nlin}$ equation. We do an $L^{2}$ estimate.
		\begin{align*}
			\frac{1}{2}\frac{d}{dt}\|u^{\nlin}\|^{2}_{L^{2}(\R^3)}
			+
			\|\nabla u^{\nlin}\|^{2}_{L^{2}(\R^3)}
			=
			\int (\nabla  u^{\nlin}) \cdot (u\otimes u)\,dx
			.
		\end{align*}
		Due to the nature of $u^{\nlin}$ being divergence-free, we see
		\begin{align*}
			\int (\nabla  u^{\nlin}) \cdot (u^{\nlin}\otimes u^{\nlin})\,dx
			=0
			.
		\end{align*}
		Thus,
		\begin{equation*}
			\begin{split}
				\frac{1}{2}\frac{d}{dt}\|u^{\nlin}\|^{2}_{L^{2}(\R^3)}
				+
				\|\nabla u^{\nlin}\|^{2}_{L^{2}(\R^3)}
				&=
				\int_{\R^3} (\nabla  u^{\nlin}) \cdot (u\otimes u-u^{\nlin}\otimes u^{\nlin})\,dx
				\\&\leq
				\frac{1}{2}\|\nabla u^{\nlin}\|^{2}_{L^{2}(\R^3)}
				+
				2\|u\otimes u-u^{\nlin}\otimes u^{\nlin}\|^{2}_{L^{2}(\R^3)}
				.
			\end{split}
		\end{equation*}
		Then, integrating on the time interval $[-1/2,1]$ yields
		\begin{equation*}
			\begin{split}
				\int_{-1/2}^{1} \int_{\R^3} |\nabla  u^{\nlin}|^{2}\,dx \,dt
				\leq
				M^{4}
				+
				4\int_{-1/2}^{1}\|u\otimes u-u^{\nlin}\otimes u^{\nlin}\|^{2}_{L^{2}(\R^3)}\,dt
				.
			\end{split}
		\end{equation*}
		Notice that
		\begin{equation*}\label{e.w06294}
			u\otimes u-u^{\nlin}\otimes u^{\nlin}=
			u^{\lin}\otimes u
			+
			u^{\nlin} \otimes u^{\lin}
			.
		\end{equation*}
	Then,  by hypothesis \eqref{e:ub-bounds}, \eqref{eq:linbounds}, \eqref{e.w07017} and H\"older's inequality, 
	\begin{align*}
		\begin{split}
			&\int_{-1/2}^{1}
			\|u\otimes u-u^{\nlin}\otimes u^{\nlin}\|^{2}_{L^{2}(\R^3)}\,dt
			=
			\int_{-1/2}^{1}\|u^{\lin}\otimes u
			+
			u^{\nlin} \otimes u^{\lin}\|^{2}_{L^{2}(\R^3)}\,dt
			\\
			&\leq
			\int_{-1/2}^{1}\|u^{\lin}\otimes u\|^{2}_{L^{2}(\R^3)}\,dt
			+
			\int_{-1/2}^{1}\|
			u^{\nlin} \otimes u^{\lin}\|^{2}_{L^{2}(\R^3)}\,dt
			\\
			&=
			\|u^{\lin}\otimes u\|^{2}_{L^{2}_{t}L_{x}^{2}([-1/2,1]\times \R^3)}
			+
			\|u^{\nlin}\otimes u^{\lin}\|^{2}_{L^{2}_{t}L_{x}^{2}([-1/2,1]\times \R^3)}
			\\&\leq
			\|u^{\lin}\|^{2}_{L^{\infty}_{t}L_{x}^{6,q_1}([-1/2,1]\times \R^3)}
			\| u\|^{2}_{L^{2}_{t}L_{x}^{3, \mathfrak{q}_0 }([-1/2,1]\times \R^3)}
			\\
			&\quad +
			\|u^{\nlin}\|^{2}_{L^{2}_{t}L_{x}^{3, \mathfrak{q}_0 }([-1/2,1]\times \R^3)}
			\| u^{\lin}\|^{2}_{L^{\infty}_{t}L_{x}^{6,q_1}([-1/2,1]\times \R^3)}
			\lesssim M^4,
		\end{split}
	\end{align*}
	where we assumed $1/2=1/\mathfrak{q}_{0}+1/q_{1}$. and $1/q_{1}\leq 5/6+1/\mathfrak{q}_{0}$, which is equivalent to $q_1\geq 3/2$. Thus, 
		\begin{equation*}\label{e.w06295}
			\int_{-1/2}^{1} \int_{\R^3}  |u\otimes u-u^{\nlin}\otimes u^{\nlin}|^{2}\,dx \,dt
			\lesssim
			M^{4}
			.
		\end{equation*}
		Combining the above estimates yields
		\begin{align}\label{e.w07019}
			\int_{-1/2}^{1} \int_{\R^3} |\nabla  u^{\nlin}|^{2} \,dx \,dt
			\lesssim
			M^{4}.
		\end{align}
		By Sobolev embedding in Lemma \ref{l:jh}, we see for $q\geq 2$
		\begin{align}\label{e.w070111}
			\| u^{\nlin}\|_{L_{t}^{2}L_{x}^{6,q}([-1/2,1]\times \R^3)}
			\lesssim
			\| u^{\nlin}\|_{L_{t}^{2}L_{x}^{6,2}([-1/2,1]\times \R^3)}
			\lesssim
			\| \nabla u^{\nlin}\|_{L_{t}^{2}L_{x}^{2}([-1/2,1]\times \R^3)}
			\lesssim
			M^{2}
			.
		\end{align}
		By Plancherel's theorem, we get
		\begin{align}\label{e.w070110}
			\sum_{N}N^{2}	\|P_{N}u^{\nlin}\|^{2}_{L_{t}^{2}L_{x}^{2}([-1/2,1]\times \R^3)}	
			\lesssim
			M^{4}
			.
		\end{align}
		Next, we prove the total speed property. Recall equation \eqref{e.w06301}, apply the Littlewood-Paley projector $P_{N}$ and get
		\begin{align}\label{e.w06302}
			P_{N}u^{\nlin}(x,t)=
			e^{(t+\frac{1}{2})\Delta}P_{N}u^{\nlin}(-\tfrac{1}{2})
			-\int_{-1/2}^{t} P_{N}e^{(t-s)\Delta} \left(\mathbb P \nabla \cdot \tilde{P}_{N}(u \otimes u)\right)\,ds.
		\end{align}
		We will have
		\begin{equation}\label{e.w06303}
			\begin{split}
				\|P_{N}u^{\nlin}\|_{L_{t}^{1}L_{x}^{\infty}([0,1]\times \R^3)}
				&\lesssim
				MN\exp{(-N^{2}/20)}
				+
				N^{-1}\|\tilde{P}_{N}(u\otimes u)\|_{L_{t}^{1}L_{x}^{\infty}([-1/2,1]\times \R^3)}
				.
			\end{split}
		\end{equation}
	Indeed, by equation \eqref{e:bersteinpn} ($p_1=q_1=\infty$, $p_2=3$), we see
	\begin{align*}
		\left\|e^{(t+\tfrac{1}{2})\Delta}P_{N}u^{\nlin}(-\tfrac{1}{2})\right\|_{L_{t}^{1}L_{x}^{\infty}([0,1]\times \R^3)} \leq 	\left\|e^{-N^2/20(t+\frac{1}{2})}N \|u^{\nlin}(-\tfrac{1}{2})\|_{L_x^{3, { \mathfrak{q}_0 }}}\right\|_{L_t^1} 
		\leq 	MN\exp{(-N^{2}/20)}.
	\end{align*}
	Further, by \eqref{e:bersteinpn}, 
	\begin{align*}
		&\left\|\int_{-1/2}^{t} P_{N}e^{(t-s)\Delta} \left(\mathbb P \nabla \cdot \tilde{P}_{N}(u \otimes u)\right)\,ds\right\|_{L_{t}^{1}L_{x}^{\infty}([0,1]\times \R^3)} \\
		&\lesssim 
		\int_{0}^{1}\left(\int_{-1/2}^{t}Ne^{-N^2(t-s)/20}~ds\right) \|\tilde{P}_{N}(u \otimes u)\|_{L_x^{\infty}}\,dt\\
		&\lesssim 
		N^{-1}\|\tilde{P}_{N}(u \otimes u)\|_{L_{t}^{1}L_{x}^{\infty}([-1/2,1]\times \R^3)}.
	\end{align*}
		Next, we split
		\begin{equation*}\label{e.w07015}
			u\otimes u
			=
			u^{\lin}\otimes u^{\lin}+u^{\nlin}\otimes u^{\lin}+u^{\lin}\otimes u^{\nlin}+u^{\nlin}\otimes u^{\nlin}
			.
		\end{equation*}
		Thus, by equations \eqref{e.w07017}, we get (with $j=0,p=\infty, q_1=\infty$)
		\begin{equation*}\label{e.w07018}
			\|\tilde{P}_{N}(u^{\lin}\otimes u^{\lin})\|_{L_{t}^{1}L_{x}^{\infty}([-1/2,1]\times \R^3)}
			\lesssim
			\|u^{\lin}\otimes u^{\lin}\|_{L_{t}^{1}L_{x}^{\infty,\infty}([-1/2,1]\times \R^3)}
			\lesssim
			M^2
			.
		\end{equation*}
	Then, by \eqref{e:bernstein}, \eqref{e.w07017} and \eqref{e.w070111}, we obtain
	\begin{align*}
		\begin{split}
			\|\tilde{P}_{N}(u^{\nlin}\otimes u^{\lin})\|_{L_{t}^{1}L_{x}^{\infty}([-1/2,1]\times \R^3)}
			\lesssim &
			N^{1/2}
			\|u^{\lin}\otimes u^{\nlin}\|_{L_{t}^{1}L_{x}^{6, \infty}([-1/2,1]\times \R^3)}\\
			\lesssim & N^{1/2}\|u^{\lin}\|_{L^2_tL^{\infty}_x}\|u^{\nlin}\|_{L^2_tL_x^{6, \infty}} \lesssim M^3	N^{1/2}.
		\end{split}
	\end{align*}
	Similarly, we get
	\begin{align*}\label{e.w08131}
		\begin{split}
			&\|\tilde{P}_{N}(u^{\lin}\otimes u^{\nlin})\|_{L_{t}^{1}L_{x}^{\infty}([-1/2,1]\times \R^3)}
			\lesssim M^3	N^{1/2}.
		\end{split}
	\end{align*}
 Next, for $u^{\nlin} \otimes u^{\nlin}$, we further decompose it into ``low-low", ``low-high", ``high-low" and ``high-high" :
	\begin{equation}\label{lin_nlin}
	u^{\nlin} \otimes u^{\nlin} = \Pi_{l-l}+\Pi_{l-h} +\Pi_{h-l} + \Pi_{h-h},
	\end{equation}
where 
		\begin{align*}
			\begin{split}
			\Pi_{l-l} = 
			P_{\leq N}u^{\nlin} \otimes P_{\leq N}u^{\nlin} \comma
			\Pi_{l-h} = 
			P_{\leq N}u^{\nlin} \otimes P_{> N}u^{\nlin} \comma\\
			\Pi_{h-l} = 
			P_{> N}u^{\nlin} \otimes P_{\leq N}u^{\nlin} \comma
			\Pi_{h-h} = 
			P_{> N}u^{\nlin} \otimes P_{> N}u^{\nlin}.
			\end{split}
		\end{align*}
By H\"older's inequality, we have
		\begin{align}\label{eq_four_term}
			\begin{split}
				&\|\Pi_{l-l}\|_{L_{t}^{1}L_{x}^{\infty}([-1/2,1]\times \R^3)}
				\lesssim
				\|P_{\leq N}u^{\nlin}\|^{2}_{L_{t}^{2}L_{x}^{\infty}([-1/2,1]\times \R^3)}
				,\\
				&\|\Pi_{l-h} + \Pi_{h-l}\|_{L_{t}^{1}L_{x}^{2}([-1/2,1]\times \R^3)}
				\lesssim
				\|P_{\leq N}u^{\nlin}\|_{L_{t}^{2}L_{x}^{\infty}([-1/2,1]\times \R^3)} 
				\|P_{> N}u^{\nlin}\|_{L_{t}^{2}L_{x}^{2}([-1/2,1]\times \R^3)}
				,\\
&\|\Pi_{h-h}\|_{L_{t}^{1}L_{x}^{1}([-1/2,1]\times \R^3)}
				\lesssim
				\|P_{> N}u^{\nlin}\|^{2}_{L_{t}^{2}L_{x}^{2}([-1/2,1]\times \R^3)}
				.
			\end{split}
		\end{align}
		Hence, by Bernstein, triangle inequality, Young's inequality,  equations \eqref{lin_nlin}, and \eqref{eq_four_term}, we get
		\begin{equation*}\label{e.w07023}
			\begin{split}
			&	\|\tilde{P}_{N}(u^{\nlin}  \otimes u^{\nlin})\|_{L_{t}^{1}L_{x}^{\infty}([-1/2,1]\times \R^3)} \\
  \lesssim &
				\|P_{\leq N}u^{\nlin}\|^{2}_{L_{t}^{2}L_{x}^{\infty}([-1/2,1]\times \R^3)}
				+
			 	N^{3/2}\|P_{\leq N}u^{\nlin}\|_{L_{t}^{2}L_{x}^{\infty}([-1/2,1]\times \R^3)}
				\|P_{> N}u^{\nlin}\|_{L_{t}^{2}L_{x}^{2}([-1/2,1]\times \R^3)}
				\\& + N^3
				\|P_{> N}u^{\nlin}\|^{2}_{L_{t}^{2}L_{x}^{2}([-1/2,1]\times \R^3)}
				\\ \lesssim &
				\|P_{\leq N}u^{\nlin}\|^{2}_{L_{t}^{2}L_{x}^{\infty}([-1/2,1]\times \R^3)}
				+
				N^{3}\|P_{> N}u^{\nlin}\|^{2}_{L_{t}^{2}L_{x}^{2}([-1/2,1]\times \R^3)}
				.
			\end{split}
		\end{equation*}
		Thus, combining all the above estimates and recalling equations \eqref{e.w06303} yield
		\begin{equation*}
			\begin{split}
				\|P_{N}u^{\nlin}\|_{L_{t}^{1}L_{x}^{\infty}([0,1]\times \R^3)}
				&\lesssim
				M^3N^{-1/2}
				\\&\quad+
				N^{-1}\left( \|P_{\leq N}u^{\nlin}\|^{2}_{L_{t}^{2}L_{x}^{\infty}([-1/2,1]\times \R^3)}
				+
				N^{3}\|P_{> N}u^{\nlin}\|^{2}_{L_{t}^{2}L_{x}^{2}([-1/2,1]\times \R^3)}\right)
				.
			\end{split}
		\end{equation*}
	Then, by equation \eqref{e:bernstein} and Cauchy-Schwarz, we get
	\begin{align*}\
		\begin{split}
			\|P_{\leq N}u^{\nlin}\|_{L_{t}^{2}L_{x}^{\infty}([-1/2,1]\times \R^3)}^{2}
			&=
			\|\sum_{N'\leq N} P_{ N'}u^{\nlin}\|_{L_{t}^{2}L_{x}^{\infty}([-1/2,1]\times \R^3)}^{2}
			\\&\lesssim
			\left(\sum_{N'\leq N}N'^{3/2} \|P_{ N'}u^{\nlin}\|_{L_{t}^{2}L_{x}^{2}([-1/2,1]\times \R^3)}\right)^2
			\\&\lesssim
			\sum_{N'\leq N}N'^{1/2}
			\sum_{N'\leq N}(N')^{5/2} \|P_{ N'}u^{\nlin}\|_{L_{t}^{2}L_{x}^{2}([-1/2,1]\times \R^3)}^2
			\\&	\lesssim 
			N^{1/2}
			\sum_{N'\leq N}N'^{5/2} \|P_{ N'}u^{\nlin}\|^2_{L_{t}^{2}L_{x}^{2}([-1/2,1]\times \R^3)},
		\end{split}
	\end{align*}
	where $N'$ ranges over powers of two. Next, by Plancherel, we see
	\begin{equation*}\label{e:w08062}
		\begin{split}
			\|P_{> N}u^{\nlin}\|_{L_{t}^{2}L_{x}^{2}([-1/2,1]\times \R^3)}^{2}
			=
			\|\sum_{N'> N} P_{ N'}u^{\nlin}\|_{L_{t}^{2}L_{x}^{\infty}([-1/2,1]\times \R^3)}^{2}
			&\lesssim
			\sum_{N'> N}\|P_{ N'}u^{\nlin}\|_{L_{t}^{2}L_{x}^{2}([-1/2,1]\times \R^3)}^2 .	\end{split}
	\end{equation*}
	Thus, after summing in $N$ and applying the triangle inequality together with equation \eqref{e.w070110}, we have
	\begin{align*}
		\begin{split}
			\|P_{\geq 1}u^{\nlin}\|_{L_{t}^{1}L_{x}^{\infty}([0,1]\times \R^3)}
			&\lesssim
			M^{3}\sum_{N}N^{-1/2}
			+
			\sum_{N}
			N^{1/2}
			\sum_{N'\leq N}N'^{5/2} \|P_{ N'}u^{\nlin}\|^2_{L_{t}^{2}L_{x}^{2}([-1/2,1]\times \R^3)}
			\\&\quad+
			\sum_{N}
			N^{2}
			\sum_{N'\leq N}N'^{5/2} \|P_{ N'}u^{\nlin}\|^2_{L_{t}^{2}L_{x}^{2}([-1/2,1]\times \R^3)}
			\\&\lesssim
			M^{3}
			+
			\sum_{N'\leq N}N'^{2} \|P_{ N'}u^{\nlin}\|^2_{L_{t}^{2}L_{x}^{2}([-1/2,1]\times \R^3)}
			%	\\&\lesssim
			%	M^3\sum_{N}N^{-1/2}
			%	+
			%N^{-1/2}	\sum_{N'\leq N}(N')^{2} \|P_{ N'}u^{\nlin}\|_{L_{t}^{2}L_{x}^{2,\infty}([0,1]\times \R^3)}^2 \\
			%&\quad +N^2 \sum_{N'> N}\|P_{ N'}u^{\nlin}\|_{L_{t}^{2}L_{x}^{2,\infty}([0,1]\times \R^3)}^2 \\
			%&\leq M^3+ 	\sum_{N'}(N')^{2} \|P_{ N'}u^{\nlin}\|_{L_{t}^{2}L_{x}^{2,\infty}([0,1]\times \R^3)}^2 
			\lesssim
			M^{4}
			.
		\end{split}
	\end{align*}
		By \eqref{eq:linbounds} and \eqref{e:bernstein}, 
		We have
		\begin{equation*}\label{e:w08064}
			\begin{split}
				\|u^{\lin}\|_{L_{t}^{1}L_{x}^{\infty}([0,1]\times \R^3)}
				+
				\|P_{<1}u^{\nlin}\|_{L_{t}^{1}L_{x}^{\infty}([0,1]\times \R^3)}
				&\lesssim
				M
				,
			\end{split}
		\end{equation*}
		and thus
		\begin{equation*}\label{e:w08065}
			\begin{split}
				\|u\|_{L_{t}^{1}L_{x}^{\infty}([0,1]\times \R^3)}
				&\lesssim
				M^4
				.
			\end{split}
		\end{equation*}
		This concludes the proof.
	\end{proof}

		\begin{proposition}\label{p.w08291}
			(Epochs of regularity) Let $u , b :[t_0-T, t_0] \times \mathbb{R}^3 \to \mathbb{R}^3$, $p: [t_0-T, t_0]\times \mathbb{R}^3 \to \mathbb{R}$ be a classical solution to the Navier-Stokes equation satisfying \eqref{e:ub-bounds}. Then for any interval $I$ in $[t_0-T/2, t_0]$, there is a subinterval $I' \subset I$ with $|I'| \gtrsim M^{-8}|I| $ such that 
			\begin{equation*}\label{e.w08171}
				\|\nabla^iu\|_{L^{\infty}_t L_x^{\infty} (I' \times \mathbb{R}^3)}   \lesssim M^{O(1)} |I|^{-(i+1)/2}
			\end{equation*}
			and 
			\begin{equation*}	\|\nabla^i\omega\|_{L^{\infty}_t L_x^{\infty} (I' \times \mathbb{R}^3)} \lesssim M^{O(1)} |I|^{-(i+2)/2}
			\end{equation*}
			for $i=0, 1$. 
		\end{proposition}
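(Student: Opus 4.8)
I would adapt Tao's epoch-of-regularity argument, organizing the proof so that the Lorentz exponent $\mathfrak q_0$ intervenes \emph{only} through the bounded total speed bound of Proposition~\ref{prop : bounded}(ii) (already established in the $L^{3,\mathfrak q_0}$ setting); every later step is purely parabolic smoothing and is blind to the distinction between $L^3$ and $L^{3,\mathfrak q_0}$. The first step is a reduction by scaling: the asserted estimates are invariant under $u\mapsto u_\lambda(t,x)=\lambda u(\lambda^2t,\lambda x)$, $p\mapsto\lambda^2 p(\lambda^2t,\lambda x)$ together with a time translation, and this operation preserves \eqref{e:ub-bounds} with the same $M$ because $L^{3,\mathfrak q_0}$ is scale critical. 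Choosing $\lambda=|I|^{1/2}$ and relabelling $T,t_0,u,p$, it suffices to treat $I=[0,1]$ and produce $I'\subset[0,1]$ with $|I'|\gtrsim M^{-8}$ on which $\|\nabla^j u\|_{L^\infty_tL^\infty_x(I'\times\R^3)}\lesssim M^{O(1)}$ for $j=0,1,2$; the vorticity bounds for $i=0,1$ are then immediate, since each component of $\nabla^i\omega$ is a fixed linear combination of components of $\nabla^{i+1}u$, so $|\nabla^i\omega|\lesssim|\nabla^{i+1}u|$ pointwise.

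\textbf{Locating a good starting time.} Next I would invoke Proposition~\ref{prop : bounded}(ii), which gives $\|u\|_{L^1_tL^\infty_x([0,1]\times\R^3)}\lesssim M^4$. By Markov's inequality there is an absolute constant $C_1$ such that $\mathcal G:=\{t\in[0,1]:\|u(t)\|_{L^\infty_x}\le C_1M^4\}$ has measure $\ge 1/2$; in particular $\|u(t)\|_{L^\infty_x}<\infty$ on $\mathcal G$. Since $M\ge C_0$, for a suitably small absolute constant $c_0$ one has $|\mathcal G|\ge 1/2>c_0M^{-8}$, so I may pick $t_*\in\mathcal G$ with $t_*\le 1-c_0M^{-8}$ and set $I'':=[t_*,t_*+c_0M^{-8}]$. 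The exponent $8$ is forced here: the local persistence time below scales like $\|u(t_*)\|_{L^\infty_x}^{-2}\sim M^{-8}$, and a smaller target norm at $t_*$ could not be guaranteed on a set of positive measure.

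\textbf{Short-time bootstrap on $I''$.} Since $u$ is a classical solution, on $I''$ it satisfies the Duhamel identity
\begin{equation*}
u(t)=e^{(t-t_*)\Delta}u(t_*)-\int_{t_*}^{t}e^{(t-s)\Delta}\,\mathbb P\nabla\cdot(u\otimes u)(s)\,ds .
\end{equation*}
I would use the pointwise kernel bound $\|\nabla^k e^{\tau\Delta}\mathbb P\nabla\cdot F\|_{L^\infty}\lesssim_k\tau^{-(k+1)/2}\|F\|_{L^\infty}$, valid because the composed (heat-regularized Oseen) kernel decays like $|x|^{-4-k}$ and is therefore integrable on $\R^3$, together with $\|u\otimes u\|_{L^\infty}\lesssim\|u\|_{L^\infty}^2$. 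A standard continuity argument then shows that, for $c_0$ small enough, $\|u\|_{L^\infty_tL^\infty_x(I''\times\R^3)}\lesssim M^4$: the nonlinear contribution is controlled by $\big(\int_{t_*}^t(t-s)^{-1/2}ds\big)(M^4)^2\lesssim c_0^{1/2}M^4$, which is reabsorbed. With this in hand I would differentiate Duhamel and iterate, distributing derivatives so no single kernel carries more than half an order of singularity (splitting the $s$-integral at the midpoint $(t+t_*)/2$) and applying a weakly singular Gronwall inequality; because the time span is short, $M^8\cdot c_0M^{-8}=O(1)$, all Gronwall constants are $O(1)$, and one obtains $\|\nabla^j u(t)\|_{L^\infty_x}\lesssim M^{O(1)}(t-t_*)^{-j/2}$ for $t\in(t_*,t_*+c_0M^{-8}]$ and $j=0,1,2$. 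Finally, taking $I':=[\,t_*+\tfrac{c_0}{2}M^{-8},\,t_*+c_0M^{-8}\,]$, so that $|I'|=\tfrac{c_0}{2}M^{-8}\gtrsim M^{-8}$ and $t-t_*\gtrsim M^{-8}$ on $I'$, gives $\|\nabla^j u\|_{L^\infty_tL^\infty_x(I'\times\R^3)}\lesssim M^{O(1)}$ for $j=0,1,2$; undoing the Step~1 rescaling turns these into the claimed $M^{O(1)}|I|^{-(i+1)/2}$ bounds for $u$ and, via $|\nabla^i\omega|\lesssim|\nabla^{i+1}u|$, the $M^{O(1)}|I|^{-(i+2)/2}$ bounds for $\omega$, $i=0,1$.

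\textbf{Main obstacle.} The delicate point is the third step: Navier--Stokes has no clean local theory in $L^\infty$ because $\mathbb P$ is not bounded on $L^\infty$, so one cannot appeal to an abstract mapping property and must instead exploit the explicit $|x|^{-4}$-type decay of $e^{\tau\Delta}\mathbb P\nabla\cdot$, and then arrange the derivative bootstrap so that the singular time integrals actually converge. The shortness of $I''$ (length $\sim M^{-8}$) is precisely what keeps every Gronwall constant bounded and makes the scheme close; everything else is routine parabolic regularity, and the Lorentz structure has already been absorbed into Proposition~\ref{prop : bounded}.
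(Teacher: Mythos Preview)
Your proposal is correct and takes a genuinely different route from the paper. The paper follows Tao's enstrophy argument: it keeps the linear/nonlinear splitting $u=u^{\lin}+u^{\nlin}$, locates $t_1$ with $\|\nabla u^{\nlin}(t_1)\|_{L^2}^2\lesssim M^4$ from the energy bound~\eqref{e.w07019} (a byproduct of the proof of Proposition~\ref{prop : bounded}, not its statement), runs an enstrophy differential inequality to secure $L^\infty_t\dot H^1\cap L^2_t\dot H^2$ control on $[t_1,t_1+cM^{-8}]$, passes to $\|u\|_{L^4_tL^\infty_x}\lesssim M^2$ and $\|\nabla u\|_{L^2_tL^6_x}\lesssim M^2$ via Gagliardo--Nirenberg and Sobolev, and only then iterates Duhamel up to $L^\infty_tL^\infty_x$ for $u$, $\nabla u$, and finally $\nabla\omega$ via the vorticity equation. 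You instead extract a good time with $\|u(t_*)\|_{L^\infty}\lesssim M^4$ directly from the \emph{conclusion} of bounded total speed and run a pure $L^\infty$ Kato-type continuation. Your route is shorter and avoids the enstrophy machinery and the $u^{\lin}/u^{\nlin}$ splitting altogether; the price is the $L^\infty$ derivative bootstrap, where the raw kernel $\nabla e^{\tau\Delta}\mathbb P\nabla\cdot$ produces a non-integrable $(t-s)^{-1}$ and one must split the time integral and rewrite the nonlinearity as $u\cdot\nabla u$ near $s=t$ (using that $\nabla e^{\tau\Delta}\mathbb P$ still has an $L^1$ kernel of size $\tau^{-1/2}$)---precisely the point you flag as the main obstacle. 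The paper's detour through $L^6_x$ for $\nabla u$ buys an integrable $(t-s)^{-3/4}$ singularity and so never confronts this issue.
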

		\begin{proof}
By rescaling and time translation, we may assume without loss of generality that $I=[0, 1]$ and $[-1, 1] \subset [t_0-T, t_0]$. 	We define the enstrophy-type quantity
\begin{equation*}
\mathcal{E}(t) = \frac{1}{2} \int_{\mathbb R^3} |\nabla u^{\nlin} (t,x)|^2 dx
\end{equation*}
where $\nabla u^{\nlin}$ satisfying equation 
\begin{align}\label{eq : u_nlin2}
\partial_t u^{\nlin}	-	\Delta u^{\nlin}+	u\cdot \nabla u	+	\nabla p	= 0.
\end{align}		
For $\mathcal{E}(t)$,  we are able to find $t_1 \in [0, \frac{1}{2}]$ such that $\mathcal{E}(t_1) \lesssim M^4$.  For $t \in [t_1, t_1 +CM^{-8}]=[\tau(0), \tau(1)]$, where $\tau(s):= t_1 +scM^{-8}$ and small $c>0$,  a continuity argument yields
\begin{equation*}
	\int_{\tau(0)}^{\tau(1)} \int_{\R^3} |\nabla^2  u^{\nlin}|^{2} \,dx \,dt
	\lesssim M^{4}.
\end{equation*}
(more details about this can be found in \cite{tao} on page 13).
By fundamental theorem of calculus, we have  
			\begin{align}\label{e.w08141}
				\|\nabla u\|_{L^{\infty}_{t}L^{2}_{x} ([\tau(0),\tau(1)]\times\R^3)} 
				+
				\|\nabla^{2} u\|_{L^{2}_{t}L^{2}_{x} ([\tau(0),\tau(1)]\times\R^3)} 
				\lesssim
				M^2.
			\end{align}
			By the Gagliardo-Nirenberg inequality, we obtain
			\begin{equation*}
				\label{eq:gagliardo}
				\| u^{\nlin}\|_{L^{\infty}_{x} }
				\lesssim
				\| \nabla u^{\nlin}\|_{L^{2}_{x}}^{1/2} 
				\| \nabla^{2}u^{\nlin}\|_{L^{2}_{x}}^{1/2}
				.
			\end{equation*}
			In particular, we have
			\begin{align*}
				\| u^{\nlin}\|_{L^{4}_{t}L^{\infty}_{x} ([\tau(0),\tau(1)]\times\R^3)}
				\lesssim
				M^2
				\comma
				\| u^{}\|_{L^{4}_{t}L^{\infty}_{x} ([\tau(0),\tau(1)]\times\R^3)}
				\lesssim
				M^2.
			\end{align*}
			Hence, by equations \eqref{e.w07017} and \eqref{e.w08141} and Sobolev embedding, we get	
			\begin{equation}\label{e.w0911}
				\| \nabla u^{\nlin}\|_{L^{2}_ {t}L^{6}_{x} ([\tau(0),\tau(1)]\times\R^3)}
				\lesssim
				M^2
				\comma
				\| \nabla u^{}\|_{L^{2}_{t}L^{6}_{x} ([\tau(0),\tau(1)]\times\R^3)}
				\lesssim
				M^2
				.
			\end{equation}
			Now we do iteration to obtain higher regularity estimates. Assuming $t \in [\tau(0.1),\tau(1)]$, we have
			\begin{align}\label{e.w08142}
				u(t)= e^{(t-\tau(0))\Delta} u(\tau(0))
				-
				\int_{\tau(0)}^{t} e^{(t-s)\Delta} \mathbb P\nabla \cdot (u\otimes u)\,ds
				.
			\end{align}				
			Notice that by heat kernel estimates, we see
			\begin{align*}
				\|e^{(t-s)\Delta}P\nabla \cdot (u\otimes u)\|_{L_{x}^{\infty}}
				\lesssim
				(t-s)^{-1/2}
				\| u\otimes u\|_{L_{x}^{\infty}}
				,
			\end{align*}
			and by \eqref{e:bernsteintime},
			\begin{align*}
				\|e^{(t-\tau(0))\Delta} u(\tau(0))\|_{L_{x}^{\infty}}
				\lesssim
				(t-\tau(0))^{-1/2}
				\| u(\tau(0)\|_{L_{x}^{3, \mathfrak{q_0}}}
				\lesssim
				M^5
				.
			\end{align*}
			Since 
			\begin{equation*}
				(\tau(0.1)-\tau(0))^{-\frac{1}{2}} \leq (t-\tau(0))^{-\frac{1}{2}} \leq (\tau(1)-\tau(0))^{-\frac{1}{2}} \lesssim M^4.
			\end{equation*}
			Therefore, by triangle inequality
			\begin{align*}
				\|u(t)\|_{L^{\infty}_{x}(\R^3)}
				&\leq
				\|e^{(t-\tau(0))\Delta} u(\tau(0))\|_{L^{\infty}_{x}(\R^3)}
				+
				\int_{\tau(0)}^{t} \|e^{(t-s)\Delta} \mathbb P\nabla \cdot (u\otimes u)\|_{L^{\infty}_{x}(\R^3)}\,ds
				\\&\lesssim
				M^5
				+
				\int_{\tau(0)}^{t} (t-s)^{-1/2} \|u(s)\|_{L_{x}^{\infty}(\R^3)}^2 \,ds
				.
			\end{align*}	
			Then, by Young's convolution inequality, we get
			\begin{align*}
				\|u\|_{L_{t}^{8}L^{\infty}_{x} ([\tau(0.1),\tau(1)]\times\R^3)}
				\lesssim
				M^5+\|t^{-\frac{1}{2}}\|_{L_t^{8/7}}\|u\|_{L_t^4L_x^{\infty}}^2\lesssim M^5+M^4\lesssim M^5
				.
			\end{align*}	
			Repeating the above process for $t \in [\tau(0.2),\tau(1)]$ yields
			\begin{align*}
				\|u(t)\|_{L^{\infty}_{x}(\R^3)}
				\lesssim
				M^5
				+
				\int_{\tau(0.1)}^{t} (t-s)^{-1/2} \|u(s)\|_{L_{x}^{\infty}(\R^3)}^2 \,ds
				.
			\end{align*}	
			And thus by H\"older's inequality, we have
			\begin{align}
				\label{eq:inftimespace}
				\|u\|_{L_{t}^{\infty}L^{\infty}_{x} ([\tau(0.2),\tau(1)]\times\R^3)}
				\lesssim
				M^5
				.
			\end{align}	
			Next, we want to show that
			\begin{align*}
				\|\nabla u\|_{L_{t}^{\infty}L^{\infty}_{x} ([\tau(0.4),\tau(1)]\times\R^3)}
				\lesssim
				M^{O(1)}
				.
			\end{align*}	
			Indeed, from the mild formulation, we differentiate both sides of equation \eqref{e.w08142} and get for $t \in [\tau(0.3),\tau(1)]$ 
			\begin{equation*}\label{e.w08143}
				\nabla u(t)= \nabla e^{(t-\tau(0.2))\Delta} u(\tau(0.2))
				-
				\int_{\tau(0.2)}^{t} \nabla e^{(t-s)\Delta} \mathbb P\nabla \cdot (u\otimes u)\,ds
				.
			\end{equation*}				
	Thus we claim that 
	\begin{align*}
		\|\nabla u(t)\|_{L_x^{\infty}(\mathbb{R}^3)}&\lesssim 	\|\nabla e^{(t-\tau(0.2))\Delta} u(\tau(0.2))\|_{L_x^{\infty}(\mathbb{R}^3)}+	\int_{\tau(0.2)}^{t}\| \nabla e^{(t-s)\Delta} \mathbb P\nabla \cdot (u\otimes u)\,ds\|_{L_x^{\infty}(\mathbb{R}^3)}\\ &\lesssim M^{O(1)}+\int_{\tau(0.2)}^t(t-s)^{-3/4}\|\nabla\cdot (u\otimes u)(s)\|_{L_x^{6}(\mathbb{R}^3)}~ds.
	\end{align*}
	Indeed, 
	by \eqref{e:bernsteintime} ($j=1$, $d=3$, $p_2=3$, $ \mathfrak{q}_0 >3$), and $\tau(0.3)=t_1+0.3cM^{-8}$, $\tau(0.2)=t_1+0.2cM^{-8}$, 
	\begin{align*}
		\|\nabla e^{(t-\tau(0.2))\Delta} u(\tau(0.2))\|_{L_x^{\infty}(\mathbb{R}^3)} \leq (t-\tau(0.2))^{-1} \|u(\tau(0.2))\|_{L^{3,\mathfrak{q}_0 }}(\mathbb{R}^3) \lesssim M^{O(1)}.
	\end{align*}
	By \eqref{e:bernsteintime} ($j=1$, $d=3$, $p_2=6$, $q_2=6$),
	\begin{align*}
		\int_{\tau(0.2)}^{t} 	\| \nabla e^{(t-s)\Delta} \mathbb P\nabla \cdot (u\otimes u)\,\|_{L_x^{\infty}(\mathbb{R}^3)} ~ds & \leq \int_{\tau(0.2)}^{t} (t-s)^{-3/4}\|\nabla \cdot (u\otimes u)(s)\|_{L_x^{6}(\mathbb{R}^3)}~ds.\\
	\end{align*}
	By \eqref{e.w0911}, \eqref{eq:inftimespace}, Young's convolution inequality in time $t$, and H\"older's inequality in space $x$, 
	\begin{align*}
		&\| \int_{\tau(0.2)}^{t} (t-s)^{-3/4} 
		\|\nabla \cdot (u\otimes u)(s)\|_{L_x^{6}(\mathbb{R}^3)}~ds\|_{L_t^4([\tau(0.3), \tau(1)])}\\
		&\leq 	\|t^{-3/4}\|_{L_t^{4/3}} \|\nabla \cdot (u\otimes u)\|_{L_t^2 L_x^{6} ([\tau(0.2), \tau(1) ]  \times \mathbb{R}^3)}\\
		&\lesssim \|u \cdot \nabla u\|_{L_t^2 L_x^{6} ([\tau(0.2), \tau(1) ]  \times \mathbb{R}^3)} \\
		&\lesssim \|\nabla u\|_{L_t^2L_x^6([\tau(0), \tau(1)]\times \mathbb{R}^3)} \|u\|_{L_t^{\infty}L_x^{\infty}([\tau(0.2), \tau(1)]\times \mathbb{R}^3)}  \lesssim M^{O(1)}.
	\end{align*}
	Hence, one has
	\begin{equation}
		\label{eq:t4xinfty}
		\|\nabla u\|_{L_t^4L_x^{\infty}([\tau(0.3), \tau(1)] \times \mathbb{R}^3)} \lesssim M^{O(1)}.
	\end{equation}
	By \eqref{eq:inftimespace}, \eqref{eq:t4xinfty}, Leibniz and  H\"older's inequality,
	\begin{equation*}
		\|\nabla\cdot ( u\otimes u)\|_{L_t^4L_x^{\infty}([\tau(0.3), \tau(1)] \times \mathbb{R}^3)} \lesssim M^{O(1)}.
	\end{equation*}
	Similarly by \eqref{e:bernsteintime}($j=1$, $p_1=q_1=p_2=q_2=\infty$), for $t \in [\tau(0.4), \tau(1)]$,
	\begin{align*}
		\|\nabla u(t)\|_{L_x^{\infty}(\mathbb{R}^3)}
		&\leq  \|\nabla e^{(t-\tau(0.3))\Delta} u(\tau(0.3))\|_{L_x^{\infty}(\mathbb{R}^3)}
		+
		\|\int_{\tau(0.3)}^{t} \nabla e^{(t-s)\Delta} \mathbb P\nabla \cdot (u\otimes u)\,ds \|_{L_x^{\infty}(\mathbb{R}^3)}\\
		&\lesssim M^{O(1)}+\int_{\tau(0.3)}^t(t-s)^{-1/2}\|\nabla\cdot (u\otimes u)(s)\|_{L_x^{\infty}(\mathbb{R}^3)}~ds.
	\end{align*}
	Then, by Young's convolution inequality in time $t$ and H\"older's inequality, 
	\begin{align*}
		\|\nabla u\|_{L_t^{\infty}L_x^{\infty}([\tau(0.4), \tau(1)] \times \mathbb{R}^3)} &\lesssim M^{O(1)}.
	\end{align*}
	By the vorticity equation, 
	\begin{equation*}
		\label{eq:vorticity}
		\partial_t\omega=\Delta \omega-(u\cdot \nabla )\omega+(\omega \cdot \nabla)u, 
	\end{equation*}
	we have, 
	\begin{equation*}
		\label{eq:vorticity1}
		\partial_t\omega=\Delta \omega+O(M^{O(1)})(|\omega|+|\nabla \omega|) 
	\end{equation*}
	on $[\tau(0.4), \tau(1)] \times \mathbb{R}^3$ and $\omega=O(M^{O(1)})$ on this slab. By standard parabolic estimates, we obtain 
	\begin{equation*}
		\|\nabla \omega \|_{L_t^{\infty}L_x^{\infty}([\tau(0.5), \tau(1)] \times \mathbb{R}^3)}\lesssim M^{O(1)}. 
	\end{equation*}
	Setting $I'=[\tau(0.5), \tau(1)]$, we obtain the desired conclusion.
	\end{proof}
		\begin{proposition}\label{p.w08292}
			(Back propagation) Let $u :[t_0-T, t_0] \times \mathbb{R}^3 \to \mathbb{R}^3$, $p: [t_0-T, t_0]\times \mathbb{R}^3 \to \mathbb{R}$ be a classical solution to Navier-Stokes that obeys \eqref{e:ub-bounds} and
			let $(t_{1},x_{1}) \in [t_{0}-T/2, t_{0}]\times \R^3$ and $N_1 \geq A_3 T^{-1/2}$ be such that
			\begin{equation*}
				|P_{N_1}u(t_1,x_1)|\geq M_{1}^{-1}N_{1}.
			\end{equation*}
			Then there exists $(t_{2},x_{2}) \in [t_{0}-T/2, t_{1}]\times \R^3$ and $N_2 \in [M_{2}^{-1}N_1 ,M_{2}N_1]$ such that
			\begin{equation*}\label{e.w08151}	
				M_{3}^{-1}N_{1}^{-2}\leq t_1 -t_2\leq M_{3}N_{1}^{-2}
			\end{equation*}
			and
			\begin{equation*}\label{e.w08152}	
				|x_2 -x_1|\leq M_{3}N_{1}^{-1}
			\end{equation*}
			and
			\begin{equation*}\label{e.w08153}	
				|P_{N_2}u(t_2,x_2)|\geq M_{1}^{-1}N_2
				.
			\end{equation*}
		\end{proposition}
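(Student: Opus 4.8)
The plan is to follow Tao's back-propagation argument, replacing all Lebesgue norms by the appropriate Lorentz norms and invoking the Lorentz-space tools developed in Section \ref{notation} (the Bernstein inequality \eqref{e:bernstein}, the multiplier bounds \eqref{e:multiplier1}--\eqref{e:multiplier2}, and the heat-kernel estimates \eqref{e:bersteinpn}, \eqref{e:bernsteintime}). First I would use the Duhamel formula \eqref{e.w07011} around the base point $(t_1,x_1)$ to write $P_{N_1}u(t_1,x_1)$ as a sum of a linear (heat-propagated) term plus a nonlinear Duhamel integral. The linear term is controlled by \eqref{e:bersteinpn}: propagating backward a time of order $N_1^{-2}$ keeps the contribution comparable to $M_1^{-1}N_1$, while propagating much further makes it negligibly small (exponential decay $e^{-N_1^2 t/20}$). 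This localizes the ``useful'' time window to $t_1-t_2 \sim M_3 N_1^{-2}$ for appropriate powers of $M$, and fixes the upper bound $t_1-t_2 \le M_3 N_1^{-2}$.

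Next, on this time window the bilinear term $\mathbb P\nabla\cdot(u\otimes u)$ is estimated: by Hölder in Lorentz spaces (Lemma \ref{Lemma-Holder}) one has $\|u\otimes u\|_{L^{3/2,\mathfrak q_0/2}_x}\lesssim \|u\|_{L^{3,\mathfrak q_0}_x}^2 \lesssim M^2$, and then \eqref{e:bernstein} / \eqref{e:multiplier1} converts this into pointwise control of $P_N$ of the nonlinearity. The point is a pigeonhole/dichotomy: either the nonlinear Duhamel term is small compared to $M_1^{-1}N_1$ (in which case the linear term alone must be large at some earlier time, and one iterates a standard persistence-of-concentration estimate to find $(t_2,x_2)$ with the frequency $N_2$ still comparable to $N_1$ and the spatial displacement controlled by the finite propagation speed $|x_2-x_1|\lesssim M_3 N_1^{-1}$), or else the nonlinear term is large, which by the bilinear Lorentz estimate again forces a concentration of $u$ at some comparable frequency $N_2\in[M_2^{-1}N_1, M_2 N_1]$ at an earlier time $t_2$ within the asserted window, at a point $x_2$ within $M_3 N_1^{-1}$ of $x_1$. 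In both cases one extracts $|P_{N_2}u(t_2,x_2)|\ge M_1^{-1}N_2$; the lower bound $t_1-t_2\ge M_3^{-1}N_1^{-2}$ comes from the fact that on a shorter interval the heat semigroup cannot have moved the frequency-$N_1$ mass, so no genuinely new concentration can appear (one quantifies this with \eqref{e:pnub-bounds} and the $\partial_t P_N u = O(M^2 N^3)$ bound, showing $P_{N_1}u(t,x)$ barely changes over times $\lesssim M_3^{-1}N_1^{-2}$).

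The main obstacle I anticipate is purely technical rather than conceptual: making sure that the Lorentz exponents line up correctly at each application of Hölder, Young (Lemma \ref{Lemma_young}), and the multiplier theorem, since the secondary index $\mathfrak q_0$ must be tracked through every bilinear estimate (e.g. $1/\mathfrak q_0 = 1/(2\mathfrak q_0) + 1/(2\mathfrak q_0)$ for the product, and the $q$-index inequality $1/q \le 1/q_1 + 1/q_2$ in the convolution step), and one must verify these are all admissible with $3\le\mathfrak q_0<\infty$. A second, related subtlety is the bookkeeping of the powers of $M$ (choosing $C_0$ large so that $M_1, M_2, M_3$ absorb all implied constants and all losses from the dichotomy), exactly as in Tao's scheme; since the Lorentz estimates introduce only harmless multiplicative constants $C(\mathfrak q_0)$, this works verbatim. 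The spatial localization $|x_2-x_1|\lesssim M_3 N_1^{-1}$ uses the local multiplier bound \eqref{e:multiplier2} to restrict the Duhamel integral to a ball of radius $\sim M_3 N_1^{-1}$ up to a rapidly decaying tail, which is where the $R^{-50}$ term in \eqref{e:multiplier2} is used.
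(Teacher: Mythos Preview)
Your dichotomy is too coarse to close the argument. In your ``Case 2'' you assert that if the nonlinear Duhamel term is large then ``the bilinear Lorentz estimate again forces a concentration of $u$ at some comparable frequency''. But the only bilinear bound you have invoked is $\|u\otimes u\|_{L^{3/2,\mathfrak q_0/2}}\lesssim M^2$, which after Bernstein and integration over a time window of length $\sim N_1^{-2}$ gives that the nonlinear contribution is $O(M^2 N_1)$. This is \emph{always} enormous compared to the target $M_1^{-1}N_1$, so ``Case 1'' never occurs and ``Case 2'' is vacuous: knowing the nonlinear term is of size $O(M^2 N_1)$ tells you nothing about pointwise concentration of $P_N u$. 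Equivalently, if you run the contradiction (assume $|P_Nu|<M_1^{-1}N$ everywhere in the window and try to bound the paraproduct in $L^\infty$), the low-frequency factor $P_{\le N/100}u$ is only controlled by $O(MN)$, giving $M M_1^{-1}$ where you need $o(M_1^{-1})$; and the high--high sum diverges outright.

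What is missing is the iterative bootstrap that the paper carries out. The proof is by contradiction: assume $|P_Nu|<M_1^{-1}N$ throughout the window $[-M_3,-M_3^{-1}]\times B(x_1,M_4)$ for all $N\in[M_2^{-1},M_2]$. One then runs Duhamel \emph{three} times on progressively smaller local regions, each time feeding the previous step into a paraproduct decomposition, to obtain successively stronger \emph{local Lorentz-norm} bounds: first $\|P_Nu\|_{L^{3/2,\mathfrak q_0/2}_x(\text{loc})}\lesssim M^2 N^{-1}$ (from the crude bilinear bound), then $\|P_Nu\|_{L^{1,\mathfrak q_0/2}_x(\text{loc})}\lesssim M^3 N^{-2}$ (using Step~1 in the paraproduct), and finally $\|P_Nu\|_{L^{2,\mathfrak q_0/2}_x(\text{loc})}\lesssim M^3 M_1^{-1} N^{-1/2}$, where at last the contradiction hypothesis is inserted to gain the crucial factor $M_1^{-1}$. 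Only now, with this gain sitting in an $L^2$-type norm, can a fourth Duhamel + paraproduct step square it: the high--low and moderate high--high interactions give $O(M^6 M_1^{-2})$, and the very high--high tail is handled by Step~1. This yields $M_1^{-1}\lesssim M^6 M_1^{-2}$, the desired contradiction. Your plan captures the Duhamel/localization scaffolding correctly (and your treatment of the time lower bound via $\partial_t P_Nu=O(M^2N^3)$ is fine), but without the three-step norm bootstrap there is no mechanism to extract the $M_1^{-1}$ gain and square it.
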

		\begin{proof}
			Following \cite{tao}, we renormalize $N_1 =1$ and choose $t_1=0$ so that $t_0 - T \leq -T/2 \leq -M_{3}^{2}/2$. In particular, $[-2M_3 ,0] \subset [t_0 -T,t_0]$. Then by our assumption in equation \eqref{e.w08151}, we see
			\begin{equation*}\label{eq-renom-point}
				|P_{1}u(0,x_{1})|\geq M_{1}^{-1}.
			\end{equation*}
			We now prove the claim by contradiction, i.e., we assume
			\begin{equation*}\label{e.w08154}	
				\|P_{N}u \|_{L_{t}^{\infty}L_{x}^{\infty}([-M_3, -M_{3}^{-1}]\times B(x_{1}, M_4))}
					\leq 
					M_{1}^{-1}N
			\end{equation*}
			for all $M_{2}^{-1}\leq N \leq M_2$. Then, by fundamental theorem of calculus in time, we enlarge the time interval so that
			\begin{equation*}\label{e.w08155}	
				\|P_{N}u\|_{L_{t}^{\infty}L_{x}^{\infty}([-M_3, 0]\times B(x_{1}, M_4))}
					\leq 
					M_{1}^{-1}N
					.
			\end{equation*}
			{\bf Step 1:} Suppose $N\geq M_{2}^{-1}$, then by Duhamel, we get
			\begin{equation*}\label{e.w08156}	
				\begin{split}
					&\|P_{N}u(t)\|_{L_{x}^{3/2,  \mathfrak{q}_0 /2} (B(x_1, M_4))}
					\leq 
					\|e^{(t+2M_{3})\Delta}P_{N}u(-2M_3)\|_{L_{x}^{3/2, \mathfrak{q}_0 /2} (B(x_{1}, M_4))}
					\\&\quad+
					\int_{-2M_3}^{t}\|e^{(t-s)\Delta}P_{N}\mathbb P\nabla \cdot(u\otimes u)\|_{L_{x}^{3/2, \mathfrak{q}_0 /2} (B(x_{1}, M_4))}\,ds
					=I_1+I_2
					,
				\end{split}
			\end{equation*}
			where $t \in [-M_3,0]$. 
			By \eqref{e:bersteinpn} with $p_1=p_2=3$, $q_1=q_2=\mathfrak{q}_0 $, $j=0$, and $-2M_3\leq -t-2M_3\leq -M_3$ and \eqref{e:ub-bounds}, we see
			\begin{align*}
			I_1 &= 	\|e^{(t+2M_{3})\Delta}P_{N}u(-2M_3)\|_{L_{x}^{3/2,  \mathfrak{q}_0 /2} (B(x_{1}, M_4))}  \\
			 & \lesssim  \|\mathds{1}_{B(x_{1}, M_4)}\|_{L_x^{3, \mathfrak{q}_0 }(\mathbb{R}^3)}\|e^{(t+2M_{3})\Delta}P_{N}u(-2M_3)\|_{L_x^{3, \mathfrak{q}_0 }(\mathbb{R}^3)} \\
				&\lesssim M_4e^{-\frac{N^2(t+2M_3)}{20}} \|u(-2M_3)\|_{L_x^{3,\mathfrak{q}_0 }(\mathbb{R}^3)} \lesssim MM_4 e^{-\frac{N^2M_3}{20}}.
			\end{align*}
			By \eqref{e:bersteinpn} with $p_1=p_2=3/2$, $q_1=q_2=\mathfrak{q}_0 /2$, and $j=1$, we obtain
			\begin{align*}
				I_2
				&=\int_{-2M_3}^{t}\|e^{(t-s)\Delta}P_{N}\mathbb P\nabla \cdot(u\otimes u)\|_{L_{x}^{3/2, \mathfrak{q}_0/2} (\mathbb{R}^3)}\,ds 
				\lesssim \int_{-2M_3}^{t} Ne^{-\frac{N^2(t-s)}{20}}\|(u\otimes u)\|_{L_{x}^{3/2, \mathfrak{q}_0/2} (\mathbb{R}^3)}\,ds  
				\\&\lesssim \int_{-2M_3}^{t} Ne^{-\frac{N^2(t-s)}{20}}\|u\|^2_{L_{x}^{3, \mathfrak{q}_0}(\mathbb{R}^3)} \,ds
				\lesssim M^2N^{-1} (1-e^{-\frac{N^2(t+2M_3)}{20}}) \lesssim M^2N^{-1}.
			\end{align*}
			Thus, combining the estimates of $I_1$ and $I_2$ above yields
			\begin{equation*}\label{e.w08281}	
				\begin{split}
					\|P_{N}u(t)\|_{L_{t}^{\infty}L_{x}^{3/2, \mathfrak{q}_0/2} (B(x_1, M_4))}
					&\leq
					MM_4 e^{-\frac{N^2M_3}{20}}+ M^{2}N^{-1} \leq M^{2}N^{-1}
					.
				\end{split}
			\end{equation*}
			Hence  in the range $N\geq M_2^{-1}$,
			\begin{equation}\label{e.f10291}	
				\|P_{N}u(t)\|_{L_{t}^{\infty}L_{x}^{3/2, \mathfrak{q}_0/2} ([-M_3, 0]\times B(x_1, M_4))}
				\lesssim M^{2}N^{-1}.
			\end{equation}
			{\bf Step 2:} Suppose $N\geq M_{2}^{-1/2}$, then by Duhamel, we get
			\begin{equation*}\label{e.w08162}	
				\begin{split}
					\|P_{N}u(t)\|_{L_{x}^{1, \mathfrak{q}_0/2} (B(x_1, M_4 /2))}
					&\leq 
					\|e^{(t+2M_{3})\Delta}P_{N}u(-2M_3)\|_{L_{x}^{1, \mathfrak{q}_0/2} (B(x_{1}, M_4 /2))}
					\\&\quad+
					\int_{-M_3}^{t}\|e^{(t-s)\Delta}P_{N}\mathbb P\nabla \cdot(u\otimes u)\|_{L_{x}^{1, \mathfrak{q}_0/2} (B(x_{1}, M_4 /2))}\,ds
					,
				\end{split}
			\end{equation*}
			where $t \in [-M_3/2,0]$. We apply H\"older's inequality, \eqref{e:ub-bounds}, and \eqref{e:bersteinpn} and obtain
			\begin{equation}\label{e.w08282}	
				\begin{split}
					\|e^{(t+2M_{3})\Delta}P_{N}u(-2M_3)\|_{L_{x}^{1, \mathfrak{q}_0/2} (B(x_{1}, M_4 /2))}
					\lesssim
					MM_{4}^{2} e^{-N^2 M_3 /40}
					.
				\end{split}
			\end{equation}
          Then, we apply our multiplier theorem and obtain for the range $N\geq M_2^{-1/2}$, 
			\begin{equation}
				\label{e.f10293}	
				\|P_{N}u(t)\|_{L_{t}^{\infty}L_{x}^{1, \mathfrak{q}_0/2} ([-{M_3}/2, 0]\times B(x_1, {M_4}/2))}
				\lesssim M^{3}N^{-2}.
			\end{equation}		
			\\
			{\bf Step 3:} Suppose $M_{2}^{-1/3}\leq N\leq  M_{2}^{1/3}$.
			\begin{equation*}\label{e.w08163}	
				\begin{split}
					\|P_{N}u(t)\|_{L_{x}^{2, \mathfrak{q}_0/2} (B(x_1, M_4/4))}
					&\leq 
					\|e^{(t+2M_{3})\Delta}P_{N}u(-M_3/2)\|_{L_{x}^{2, \mathfrak{q}_0/2} (B(x_{1}, M_4/4))}
					\\&\quad+
					\int_{-2M_3}^{t}\|e^{(t-s)\Delta}P_{N}\mathbb P\nabla \cdot(u\otimes u)\|_{L_{x}^{2, \mathfrak{q}_0/2} (B(x_{1}, M_4/4))}\,ds
					,
				\end{split}
			\end{equation*}
			where $t \in [-M_3/3,0]$. 		
			Similar to equation \eqref{e.w08282} and by \eqref{e:multiplier2} ($p_1=2, p_2=1, q_1=q_2=\frac{\mathfrak{q}_0}{2}$), we obtain
			\begin{equation*}
				\label{e.f10292}
				\|P_Nu\|_{L_t^{\infty} L_x^{2, \mathfrak{q}_0/2} ([-M_3/4, 0] \times B(x_1, M_4/4))} \leq M_4^{-40} +N^{1/2} \|\widetilde{P}_N (u(t')\otimes u(t'))\|_{L_t^{\infty}L_x^{1, \mathfrak{q}_0/2}[-M_3/2, 0] \times B(x_1, M_4/3)}.
			\end{equation*}
			We split $\tilde{P}_N(u(t')\otimes u(t'))$ into ``low-high", ``high-low", and ``high-high" terms, that is, 
				\begin{equation*}
					\tilde{P}_N(u\otimes u)=	\pi_{h-l} + \pi_{l-h} + \pi_{h-h},
				\end{equation*}
				where 
				$$
				\pi_{h-l} =  \sum_{N_1 \sim N} \tilde{P}_N(P_{N_1}u \otimes P_{\leq {N_1}/100}u), \quad
				\pi_{l-h} = \sum_{N_1 \sim N}
				\tilde{P}_N ( P_{\leq {N_1}/100} \otimes P_{N_1}u )$$
				$$
				\pi_{h-h} =	\tilde{P}_N \sum_{N_1\sim N_2\geq N} P_{N_1}u \otimes P_{N_2}u.
				$$
				Notice that in both $\pi_{h-l}$ and $\pi_{l-h}$, we have $O(1)$ terms of the ``high-low" form or the ``low-high" form, so we only need to treat the term inside the sum. We do here the estimate for $\pi_{h-l}$, the rest follows similarly. By triangle inequality, \eqref{e.f10291} and pointwise derivative estimate \eqref{e:pnub-bounds}, we have
			\begin{align*}
				&\| P_{\leq N/100} u \|_{L_t^{\infty}L_x^{3/2, \mathfrak{q}_0/2}([-M_3, 0]\times B(x_1, M_4))} 
				\leq \|\sum_{k=0}^{\infty} P_{2^{-k} N/100} u\|_{L_t^{\infty}L_x^{3/2, \mathfrak{q}_0/2}} \\
			 &	\leq  \; \sum_{k=0}^{\infty} \|P_{2^{-k} N/100} u\|_{L_t^{\infty}L_x^{3/2, \mathfrak{q}_0/2}} \\
			 &	\leq \sum_{k=0}^{100} \|P_{2^{-k} N/100} u\|_{L_t^{\infty}L_x^{3/2, \mathfrak{q}_0/2}} +\sum_{k=100}^{\infty} \|P_{2^{-k} N/100 }u\|_{L_t^{\infty}L_x^{3/2, \mathfrak{q}_0/2}} \\
			 & 	\lesssim\sum_{k=0}^{100} M^2\frac{2^k100}{N} + \sum_{k=100}^{\infty}  M \frac{2^{-k}N}{100} \lesssim M^2N^{-1}.
			\end{align*}
			For the high-low term, by \eqref{e:multiplier2} and \eqref{e:bernstein}, we see
			\begin{align*}
				&\|	\tilde{P}_N( P_{N_1}u \otimes P_{\leq {N}/100}u)\|_{L_x^{1, \mathfrak{q}_0/2}B(x_1, {M_4}/3)} \leq \|P_{N_1}u \otimes P_{\leq {N}/100}u\|_{L_x^{1, \mathfrak{q}_0/2}B(x_1, {M_4}/2)} +M_4^{-40} \\
				& \lesssim \|P_{N_1}u\|_{L^{\infty}_x} \|P_{\leq {N}/100}u\|_{L_x^{1, \mathfrak{q}_0/2}B(x_1, {M_4}/2)}+M_4^{-40}\\
				&\lesssim M_1^{-1} N  N^{-1} \|P_{\leq {N}/100}u\|_{L_x^{3/2, \mathfrak{q}_0/2}B(x_1, {M_4}/2)}\lesssim M^3M_1^{-1} N^{-1}. 
			\end{align*}	
			For the ``high-high" term, by \eqref{e.f10293},
			\begin{align*}
				&\left\|\tilde{P}_N\left(\sum_{N_1\sim N_2\geq N} P_{N_1}u \otimes P_{N_2}u\right)\right\|_{L_x^{1, \mathfrak{q}_0/2} B(x_1, {M_4}/3)} 
				\leq \sum_{N_1\sim N_2\geq N} \|P_{N_1}u \otimes P_{N_2}u\|_{L_x^{1, \mathfrak{q}_0/2} }+M_4^{-40} 
				\\&\lesssim \sum_{N_1\sim N_2\geq N} \|P_{N_1}u\|_{L_x^{1,\mathfrak{q}_0/2}} \|P_{N_2}u\|_{L^{\infty}} +M_4^{-40} 
				 \lesssim \sum_{N_1\sim N_2\geq N}  M^3N_1^{-2} M_1^{-1} N_2  
				\\& \lesssim M^3M_1^{-1} \sum_{N_1\geq N} N_1^{-1} \lesssim M^3M_{1}^{-1}N^{-1}.
			\end{align*}
				Gathering the estimates above, we conclude that for frequency $M_2^{-1/3} \leq N \leq M_2^{1/3}$, 
				\begin{equation}\label{eq_step3}
					\|P_N u \|_{L^\infty_t L^{2, \frac{\mathfrak{q}_0}{2}}_x ([- M_3/4, 0]\times B(x_1, M_4/4))} \lesssim M^3 M_1^{-1} N^{-1/2}.
				\end{equation} 
				We see that all the estimates above we obtained in step 1-3 hold in the time interval $[- \frac{M_3}{4}, 0]$, so let us apply Duhamel's formula on this interval to get that
				\begin{align*}
					M_1^{-1} \leq |P_1 u(0, x_1)|
					& \lesssim \Big|e^{\frac{M_3}{4} \Delta} P_1 u (- \frac{M_3}{4})\Big| (x_1) + \int_{- \frac{M_3}{4}}^{0} |e^{-t' \Delta} P_1 \nabla \cdot \tilde{P}_1 (u(t') \otimes u(t'))| (x_1) \, dt' \\
					& \lesssim e^{-\frac{1}{20} \frac{M_3}{4}} M_3 + \int_{- \frac{M_3}{4}}^{0} e^{\frac{t'}{20}} 
					\left(\|\tilde{P}_1 (u(t') \otimes u(t')) (\cdot)\|_{L^{1, \frac{\mathfrak{q}_0}{2}}_x (B(x_1, M_1))} + M_1^{-50}\right)\, dt'
					,
				\end{align*}
				where we used Bernstein's inequality \eqref{e:bersteinpn} ($p_1 = q_1 = \infty$, $p_2 = 3$, $q_2= \mathfrak{q}_0$) and \eqref{e:ub-bounds} for the first term and Bernstein's inequality \eqref{e:bersteinpn} ($p_1 = q_1 = \infty$, $p_2 = 1, q_2 = \frac{\mathfrak{q}_0}{2}$) and local version of multiplier estimate \eqref{e:multiplier2} ($p_1= q_1= \infty, p_2= q_2 = 1, p_3= 3, q_3 = \mathfrak{q}_0$) for the second term. We see that the factor $ e^{-\frac{1}{20} \frac{M_3}{4}} M_3$ on the right-hand side of the inequality above is negligible compared to the integral term, so that by the pigeonhole principle, for some $t' \in [-\frac{M_3}{4}, 0]$, we have 
				$$
				M_1^{-1} \lesssim \|\tilde{P}_1 (u(t') \otimes u(t')) (\cdot) \|_{L^{1, \frac{\mathfrak{q}_0}{2}}_x (B(x_1, M_1))} .
				$$
				We fix this $t'$ and split $\tilde{P}_1 (u(t') \otimes u(t')) $ into three sum terms as in the step 3. For ``high-low" term and ``low-high" term, by local version of the multiplier theorem \eqref{e:multiplier2} and Hölder inequality \eqref{ineq : Holder} we have
				\begin{align*}
					& \|\tilde{P}_1 (P_{N_1} u(t') \otimes P_{\leq 1/100} u(t')) (\cdot) \|_{L^{1, \frac{\mathfrak{q}_0}{2}}_x (B(x_1, M_1))} \\
					\lesssim & \, \|P_{N_1} u(t')\|_{L^{2,  \mathfrak{q}_0}_x (B(x_1, 2 M_1))} \| P_{\leq 1/100} u(t') \|_{L^{2, \mathfrak{q}_0}_x (B(x_1, 2 M_1))} + M_1^{-50}\\
					\lesssim & \, \|P_{N_1} u(t')\|_{L^{2, \frac{\mathfrak{q}_0}{2}}_x (B(x_1, 2 M_1))} \| P_{\leq 1/100} u(t') \|_{L^{2, \frac{\mathfrak{q}_0}{2}}_x (B(x_1, 2 M_1))} + M_1^{-50}\\
					\lesssim & M^3 M_1^{-1} N_1^{-1/2} M^3 M_1^{-1}  + M_1^{-50} \lesssim M^6 M_1^{-2}  
					,
				\end{align*}
				where we used \eqref{eq_step3} (notice that $N \geq M_2^{-1/3}$).
				For the high-high term, 
				\begin{align*}
					& \sum_{N_1 \sim N_2 \gtrsim 1} \|\tilde{P}_1 (P_{N_1} u(t') \otimes P_{N_2} u(t')) (\cdot) \|_{L^{1,\frac{\mathfrak{q}_0}{2}}_x (B(x_1, M_1))} \\
					= & \sum_{1 \lesssim N_1 \sim N_2 \lesssim M_2^{1/3}} \|\tilde{P}_1 (P_{N_1} u(t') \otimes P_{N_2} u(t')) (\cdot) \|_{L^{1, \frac{\mathfrak{q}_0}{2}}_x (B(x_1, M_1))} \\
					& + \sum_{N_1 \sim N_2 \gtrsim M_2^{1/3}} \|\tilde{P}_1 (P_{N_1} u(t') \otimes P_{N_2} u(t')) (\cdot) \|_{L^{1, \frac{\mathfrak{q}_0}{2}}_x (B(x_1, M_1))} \\
					\lesssim &  
					\sum_{1\lesssim N_1 \sim N_2 \lesssim M_2^{1/3}} 
					\|P_{N_1} u(t')\|_{L^{2, \frac{\mathfrak{q}_0}{2}}_x (B(x_1, 2 M_1))} \| P_{N_2} u(t') \|_{L^{2, \frac{\mathfrak{q}_0}{2}}_x (B(x_1, 2 M_1))} \\
					& + 
					\sum_{N_1 \sim N_2 \gtrsim M_2^{1/3}}
					\|P_{N_1} u(t')\|_{L^{3, \mathfrak{q}_0}_x (B(x_1, 2 M_1))} \| P_{N_2} u(t') \|_{L^{3/2, \mathfrak{q}_0}_x (B(x_1, 2 M_1))}\quad\quad\quad(\text{by H\"older's inequality}) 
					\\ \lesssim & \, M^6 M_1^{-2}  + M^3 M_2^{-1/3} \quad\quad\quad\quad\quad\quad(\text{by equations }\eqref{eq_step3}\text{ and }\eqref{e.f10291}) 
					\\\lesssim&  M^6 M_1^{-2}.
				\end{align*}
				 Gathering all the estimates above we obtain 
				$$
				M_1^{-1} \lesssim  M^6 M_1^{-2},
				$$
				which gives a contradiction.
		\end{proof}
		\begin{proposition}\label{p.w08293}
			(Iterated back propagation, \cite{tao}) Let $x \in \R^3$ and $N_0 > 0$ be such that
			\begin{equation*}
				|P_{N_0}u(t_1,x_1)|\geq M_{1}^{-1}N_{0}.
			\end{equation*}
			Then for every $M_{4} N_{0}^{-2}\leq T_1 \leq M_{4}^{-1}T$, there exists	
				$(t_1 , x_1) \in [t_0 - T, t_0 -M_{3}^{-1}T_1] \times \R^3$ and  $ N_1 = M_{3}^{O(1)} T_{1}^{-1/2} $
			such that
			\begin{equation*}	
				x_1 = x_0 +O(M_{4}^{O(1)}T_{1}^{1/2} ) \comma	
				|P_{N_1}u(t_1,x_1)|\geq M_{1}^{-1}N_{1}.
			\end{equation*}
		\end{proposition}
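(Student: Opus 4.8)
The plan is to iterate the single-step back propagation of Proposition \ref{p.w08292}. Setting $(t^{(0)},x^{(0)},N^{(0)}):=(t_0,x_0,N_0)$ and applying that proposition repeatedly, I would build a chain of ``bubbles of concentration'' $(t^{(n)},x^{(n)},N^{(n)})$, each with $|P_{N^{(n)}}u(t^{(n)},x^{(n)})|\ge M_1^{-1}N^{(n)}$, obeying at every step
\begin{equation*}
M_3^{-1}(N^{(n-1)})^{-2}\le t^{(n-1)}-t^{(n)}\le M_3(N^{(n-1)})^{-2},\qquad
|x^{(n)}-x^{(n-1)}|\le M_3(N^{(n-1)})^{-1},
\end{equation*}
and $N^{(n)}\in[M_2^{-1}N^{(n-1)},M_2N^{(n-1)}]$. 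Since Proposition \ref{p.w08292} is already established in the Lorentz framework, this stage is purely quantitative and geometric and follows \cite{tao}; the task is to run the iteration until the current bubble sits at the time scale prescribed by $T_1$ and then read off the three conclusions.

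Along the way I would check that the hypotheses of Proposition \ref{p.w08292} persist. Writing $\tau_n:=t_0-t^{(n)}$, the increments show that $\tau_n$ is increasing and that, before the stopping index, $\tau_n\lesssim M_3^{O(1)}T_1\le M_3^{O(1)}M_4^{-1}T\ll T$, so each $t^{(n)}\in[t_0-T/2,t_0]$; moreover $\tau_n\ge t^{(n-1)}-t^{(n)}\gtrsim (M_2^2M_3)^{-1}(N^{(n)})^{-2}$, whence $N^{(n)}\gtrsim M_2^{-1}M_3^{-O(1)}M_4^{1/2}T^{-1/2}\ge A_3T^{-1/2}$ once $M_4$ (hence $M_2,M_3$) is a sufficiently large power of $M$, which is automatic from $M_j=M^{C_0^j}$. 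So the iteration may be continued. I would then let $n_*$ be the first index at which the current bubble reaches scale $\sim T_1$ (say the first $n$ with $(N^{(n)})^{-2}$ above a fixed multiple of $T_1$) and set $(t_1,x_1,N_1):=(t^{(n_*)},x^{(n_*)},N^{(n_*)})$. The $M_2$-bounded frequency drift together with $N^{(n_*-1)}$ still lying above the threshold forces $N_1=M_3^{O(1)}T_1^{-1/2}$; summing the time increments gives $t_0-t_1\asymp T_1$ up to a fixed power of $M_3$, so $t_1\in[t_0-T,t_0-M_3^{-1}T_1]$ after adjusting constants.

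For the spatial displacement, telescoping and $M_3(N^{(k-1)})^{-1}\lesssim M_3^{3/2}(t^{(k-1)}-t^{(k)})^{1/2}$ give $|x_1-x_0|\le\sum_{k=1}^{n_*}M_3(N^{(k-1)})^{-1}\lesssim M_3^{3/2}\sum_{k=1}^{n_*}(t^{(k-1)}-t^{(k)})^{1/2}$. One then organizes the steps by the dyadic size of $N^{(k-1)}$: the steps at a fixed scale $N'$ contribute $\asymp M_3 N'$ times the backward time spent near $N'$, and since the scales range down only to $\sim N_1\sim T_1^{-1/2}$ while the total backward time is $t_0-t_1\lesssim M_3^{O(1)}T_1$, keeping track (as in \cite{tao}) that the iteration does not linger at scales much larger than the one commensurate with the accumulated backward time, these contributions sum to a convergent geometric series of total size $O(M_4^{O(1)}T_1^{1/2})$, the asserted bound.

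The hard part is precisely this last pair of issues: showing the iteration terminates at the right scale, and bounding the accumulated displacement without an a priori bound on the number of steps. For termination, if the iteration never reached the target scale one would produce infinitely many bubbles with $\tau_n$ bounded below $T/2$; then $t^{(n)}\to t_\infty\in[t_0-T/2,t_0]$, hence $t^{(n-1)}-t^{(n)}\to 0$ and $N^{(n)}\to\infty$, whereas classical regularity of $u$ on $[t_0-T,t_0]$ forces $\|P_N u(t)\|_{L^\infty_x}=o(N)$ locally uniformly in $t$, and since the $x^{(n)}$ remain in a fixed compact set this contradicts $|P_{N^{(n)}}u(t^{(n)},x^{(n)})|\ge M_1^{-1}N^{(n)}$. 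For the displacement, the subtlety is that the crude Cauchy--Schwarz estimate of $\sum(t^{(k-1)}-t^{(k)})^{1/2}$ is useless when the step count is unbounded, so one genuinely needs the scale-by-scale bookkeeping above, exploiting that consecutive back-steps are comparable up to $M_3^{O(1)}$-factors. Both arguments are carried out in \cite{tao}, and neither interacts with the Lorentz structure, which enters only through the already-proved Proposition \ref{p.w08292}.
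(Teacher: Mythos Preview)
Your proposal is correct and matches the paper's treatment: the paper does not give its own proof of this proposition but simply cites \cite{tao}, and what you have outlined is precisely Tao's iteration argument. You have also correctly identified the key point, namely that the Lorentz exponent $\mathfrak{q}_0$ plays no role here since the only analytic input is the single-step back propagation (Proposition~\ref{p.w08292}), which has already been established in the Lorentz setting.
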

		\begin{proposition}\label{p.w08294}
			(Annuli of regularity) Let $u :[t_0-T, t_0] \times \mathbb{R}^3 \to \mathbb{R}^3$, $p: [t_0-T, t_0]\times \mathbb{R}^3 \to \mathbb{R}$ be a classical solution to Navier-Stokes that obeys \eqref{e:ub-bounds}. If $0<T'<T/2$, $x_0 \in \R^3$, and $R_0 \geq (T')^{1/2}$, then there exists a scale
			\begin{equation*}
				R_0 \leq R\leq \exp{(M_{6}^{O(1)})R_0}
			\end{equation*}
			such that on the region
			\begin{equation*}
				\Omega:= \{(t,x)\in [t_0 -T', t_0]\times \R^3: R\leq |x-x_0|\leq M_6 R \}
			\end{equation*}
			we have 
			\begin{equation*}	\|\nabla^iu\|_{L^{\infty}_t L_x^{\infty} (\Omega)} \lesssim M_{6}^{O(1)} |T'|^{-(i+1)/2}\comma	\|\nabla^i\omega\|_{L^{\infty}_t L_x^{\infty} (\Omega)} \lesssim M_{6}^{O(1)} |T'|^{-(i+1)/2}
			\end{equation*}
			for $i=0,1$.
	\end{proposition}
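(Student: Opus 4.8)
The plan is to run the space-localized analogue of the ``epochs of regularity'' argument of Proposition~\ref{p.w08291}: instead of finding a \emph{time} subinterval on which the Serrin-type Duhamel bootstrap closes, I would find, by pigeonholing over dyadic-in-$M_6$ scales, a \emph{spatial} shell $\{R\le|x-x_0|\le M_6R\}$ on which it closes. First, by the scaling and translation invariance of \eqref{e:ub-bounds} and of the conclusion, normalize $t_0=0$, $T'=1$, $x_0=0$, so that \eqref{e:ub-bounds} holds on $[-T,0]\times\R^3$ with $T>2$, $R_0\ge 1$, and the target becomes $\|\nabla^iu\|_{L^{\infty}_tL^{\infty}_x(\Omega)}+\|\nabla^i\omega\|_{L^{\infty}_tL^{\infty}_x(\Omega)}\lesssim M_6^{O(1)}$ with $R_0\le R\le\exp(M_6^{O(1)})R_0$. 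On $[-1,0]\times\R^3$ split $u=u^{\lin}+u^{\nlin}$ with $u^{\lin}(t):=e^{(t+2)\Delta}u(-2)$ (legitimate since $-2\in[-T,0]$). As in the proof of Proposition~\ref{prop : bounded}, since $t+2\in[1,2]$ on the slab, \eqref{e:bernsteintime} gives that $u^{\lin}$ and its first two spatial derivatives are bounded pointwise by $\lesssim M$ there, while $u^{\nlin}=-\int_{-2}^{\,\cdot}e^{(\cdot-s)\Delta}\mathbb P\nabla\cdot(u\otimes u)\,ds$ obeys $\|u^{\nlin}\|_{L^{\infty}_tL^2_x}+\|\nabla u^{\nlin}\|_{L^2_tL^2_x}\lesssim M^2$, hence (Gagliardo--Nirenberg) $\|u^{\nlin}\|_{L^3_tL^3_x}\lesssim M^{O(1)}$ and (Plancherel) $\sum_NN^2\|P_Nu^{\nlin}\|_{L^2_tL^2_x}^2\lesssim M^{O(1)}$ on the slab.

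Next I would produce the shell. Set $K:=\lfloor M_6^{C_\ast}\rfloor$ for a large absolute constant $C_\ast$, and for $k=1,\dots,K$ let $\widehat A_k:=\{M_6^{3k}R_0\le|x|\le M_6^{3k+3}R_0\}$ (pairwise disjoint, each with inner radius $\ge M_6\gg1$) with core $A_k:=\{M_6^{3k+1}R_0\le|x|\le M_6^{3k+2}R_0\}\subset\widehat A_k$, so that $A_k$ has ratio $M_6$ and is separated from $\partial\widehat A_k$ by a multiplicative factor $M_6$, i.e.\ by a buffer of parabolic scale $\gtrsim M_6^{3k+1}R_0\gg1$. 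Since the $\widehat A_k$ are disjoint, $\sum_k\int_{-1}^0\int_{\widehat A_k}(|u^{\nlin}|^3+|\nabla u^{\nlin}|^2)\lesssim M^{O(1)}$, so pigeonholing yields a $k$ for which
\[
\int_{-1}^0\!\!\int_{\widehat A_k}|u^{\nlin}|^3\,dx\,dt+\int_{-1}^0\!\!\int_{\widehat A_k}|\nabla u^{\nlin}|^2\,dx\,dt\;\lesssim\;\frac{M^{O(1)}}{K}\;\le\;\delta,
\]
for any prescribed absolute $\delta>0$, provided $C_\ast$ is large enough (recall $M_6=M^{C_0^6}$ and $M\ge C_0$). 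Fix this $k$ and set $R:=M_6^{3k+1}R_0$, so that $A:=A_k=\{R\le|x|\le M_6R\}$ and $R_0\le R\le\exp(M_6^{O(1)})R_0$.

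It then remains to prove $\|\nabla^iu\|_{L^{\infty}_tL^{\infty}_x(A\times[-1,0])}+\|\nabla^i\omega\|_{L^{\infty}_tL^{\infty}_x(A\times[-1,0])}\lesssim M^{O(1)}$ for $i=0,1$; since $u^{\lin}$ and its derivatives are already $O(M)$, it suffices to control $u^{\nlin}$ on the shell. I would run the Duhamel bootstrap of the proofs of Propositions~\ref{prop : bounded} and~\ref{p.w08291}, localized in space by means of the local multiplier estimate \eqref{e:multiplier2} (the $\gg1$ buffer between $A$ and $\partial\widehat A_k$ is exactly what lets the cutoffs and the $R^{-50}$ far-field errors be absorbed). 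Decomposing $u\otimes u=u^{\lin}\otimes u^{\lin}+u^{\lin}\otimes u^{\nlin}+u^{\nlin}\otimes u^{\lin}+u^{\nlin}\otimes u^{\nlin}$ in
\[
P_Nu^{\nlin}(t)=e^{(t+1)\Delta}P_Nu^{\nlin}(-1)-\int_{-1}^te^{(t-s)\Delta}P_N\mathbb P\nabla\cdot\widetilde P_N(u\otimes u)\,ds,
\]
the three terms involving $u^{\lin}$ are \emph{forcing} terms: measured in $L^{\infty}_x$ (so no volume factor enters), $u^{\lin}\otimes u^{\lin}=O(M^2)$ and $u^{\lin}\otimes u^{\nlin}=O(M)\cdot u^{\nlin}$ depend on $u^{\nlin}$ at most linearly, and they contribute $\lesssim M^{O(1)}$ to $\|\nabla^iP_Nu^{\nlin}\|_{L^{\infty}_tL^{\infty}_x(A)}$ after summation over $N\ge1$. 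The self-interaction $u^{\nlin}\otimes u^{\nlin}$ is handled exactly by the low--low/low--high/high--low/high--high splitting of Proposition~\ref{prop : bounded}, except that the resulting bound is now proportional to the \emph{small} shell quantities $\int\!\!\int_{\widehat A_k}(|u^{\nlin}|^3+|\nabla u^{\nlin}|^2)$ and $\sum_NN^2\|P_Nu^{\nlin}\|^2_{L^2_tL^2_x(\widehat A_k)}$ from the previous paragraph. A continuity/bootstrap argument then closes --- the only term with a superlinear, large-constant dependence on $u^{\nlin}$ is the self-interaction, which we have made small --- giving $\|\nabla^iu^{\nlin}\|_{L^{\infty}_tL^{\infty}_x(A\times[-1,0])}\lesssim M^{O(1)}$ for $i=0,1$. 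Adding back $u^{\lin}$ gives the $u$-bound on $A$, and then, as at the end of the proof of Proposition~\ref{p.w08291}, the vorticity equation $\partial_t\omega=\Delta\omega+O(M^{O(1)})(|\omega|+|\nabla\omega|)$ with $\omega=O(M^{O(1)})$ on a slight enlargement of $A\times[-1,0]$, together with standard interior parabolic estimates, yields the $\nabla^i\omega$ bounds; undoing the normalization restores the factors $|T'|^{-(i+1)/2}$.

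The hard part will be this last step: one must verify, via \eqref{e:multiplier2}, that the nonlocal contributions (the Leray projector and the pressure it carries, and the tails of the heat and Littlewood--Paley kernels) are genuinely controllable on $A$, with the $R^{-50}$ far-field errors absorbed by the buffer --- which is why the shells are spaced by a full factor $M_6^{3}$ --- and, above all, that the bootstrap actually closes: the background $u^{\lin}$ must be seen to enter only linearly or as a fixed forcing, so that although it is $O(M)$-large (not small) and supported on a region of enormous volume it cannot destabilize the iteration, the sole potentially destabilizing term being the self-interaction of $u^{\nlin}$, which the pigeonhole step has rendered as small as needed. Everything else is the same parabolic bookkeeping and pigeonholing already carried out for Propositions~\ref{prop : bounded} and~\ref{p.w08291}.
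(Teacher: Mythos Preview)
Your approach diverges from the paper's in a way that creates a genuine gap, and the gap lies exactly where you flag it as ``the hard part.'' You pigeonhole the \emph{nonlinear} component $u^{\nlin}$ (in the extensive norms $L^3_{t,x}$ and $L^2_t\dot H^1_x$) to find a shell on which it is small, while leaving $u^{\lin}$ at its generic size $O(M)$ throughout. The paper does the opposite: it pigeonholes the \emph{linear} component $u^{\lin}$, using that $\|\nabla^j u^{\lin}(t_1,\cdot)\|_{L^3_x}\lesssim M$ is an extensive quantity, to find a shell on which $\nabla^j u^{\lin}$ is pointwise \emph{small}, of order $M_6^{-3}$ (Lemma~\ref{Lemma_linear}). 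That smallness is not cosmetic: it is what allows the subsequent localized vorticity--enstrophy estimate (Lemma~\ref{Lemma_enstropy}, carried out with a time-dependent cutoff $\theta(t,x)$ whose support shrinks at the transport speed $\|u\|_{L^\infty_x}$) to close over the full unit time interval, because every term in $\partial_t E(t)$ that carries a factor of $u^{\lin}$ or $\omega^{\lin}$ is then $O(E)+O(M_6^{-2})$ rather than $O(M)\cdot E$. From $E(t)\lesssim M_6^{-2}$ one obtains $L^\infty_t\dot H^1_x$ control on the shell, whence $L^4_tL^\infty_x$ by Gagliardo--Nirenberg, and only then is the Duhamel bootstrap to $L^\infty_tL^\infty_x$ run.

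With your setup the cross term $u^{\lin}\otimes u^{\nlin}$ cannot be controlled. If you feed it back linearly into the bootstrap quantity $B=\|u^{\nlin}\|_{L^\infty_tL^\infty_x(A)}$, the coefficient is $O(M)$ and the time interval has length $1$, so any Gronwall or fixed-point argument costs a factor $e^{O(M)}$, which is not $M^{O(1)}$. If instead you treat it as forcing by inserting your pigeonholed bound $\|u^{\nlin}\|_{L^3_{t,x}(\widehat A_k)}\lesssim\delta^{1/3}$, then $u^{\lin}\otimes u^{\nlin}$ is small in $L^3_tL^3_x$ --- but this is a \emph{scale-critical} forcing, and $e^{(t-s)\Delta}\nabla:L^3_x\to L^\infty_x$ carries the borderline weight $(t-s)^{-1}$, which lies in no $L^p_t$ space that Young's inequality would need to reach $L^\infty_t$; the self-interaction $u^{\nlin}\otimes u^{\nlin}\in L^{3/2}_{t,x}$ is worse still. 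In short, time-integrated critical smallness on the shell does not suffice to run a parabolic bootstrap over a unit time interval; one needs $L^\infty_t$-in-time smallness of a localized enstrophy, and obtaining that requires $u^{\lin}$ itself to be small on the shell. The remedy is to pigeonhole on $u^{\lin}$ (as the paper does), after which the path to $L^\infty_tL^\infty_x$ goes through the localized vorticity enstrophy rather than a direct Duhamel iteration on $P_Nu^{\nlin}$.
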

	Before proving the proposition above, let us give some useful lemmas.	
		\begin{lemma}\label{lemma_pigeonhole}
			Let $A, R_0 >0$ and $A_{6} >>A$. Assume that $\int_{\R^3} f (x) \, dx \leq A$, then we can find a scale  
			$A_{6}^{100} R_0 \leq R \leq \exp(A_{6}^{100} ) R_0$
			such that 
			$$\int_{\mathcal{I}_R} f (x) \,dx \leq A_{6}^{-10}, $$
			where
			$\mathcal{I}_{R} := \{x: A_{6}^{-10} R \leq |x| \leq A_{6}^{10} R\}$.
		\end{lemma}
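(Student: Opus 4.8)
The plan is a straightforward dyadic pigeonhole over geometrically spaced scales, and I do not anticipate any real difficulty beyond keeping track of the exponents; I will take $f\ge 0$ throughout, which is the situation for every density to which this lemma is applied (otherwise the conclusion can fail, so nonnegativity is the natural reading of the hypothesis).

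First I would fix a family of candidate scales. Put $J:=\lceil A_6^{11}\rceil$ and, for $j=1,\dots,J$, set $R_j := A_6^{100}\,(A_6^{20})^{\,j-1}\,R_0$. Then $R_1=A_6^{100}R_0$, and since $A_6$ is large — so that a quantity of the form $C A_6^{11}\log A_6$ is dominated by $A_6^{100}$ — one has
$$R_J = A_6^{100}(A_6^{20})^{J-1}R_0 \le \exp\!\big((100+20A_6^{11})\log A_6\big)R_0 \le \exp(A_6^{100})R_0 .$$
Thus every $R_j$ lies in the admissible window $[A_6^{100}R_0,\ \exp(A_6^{100})R_0]$, and it suffices to produce a single index $j_0$ with $\int_{\mathcal{I}_{R_{j_0}}}f\,dx\le A_6^{-10}$.

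Next I would use that the annuli are almost disjoint. Since $R_{j+1}/R_j=A_6^{20}$ we have $A_6^{10}R_j=A_6^{-10}R_{j+1}$, so consecutive annuli $\mathcal{I}_{R_j}=\{x:A_6^{-10}R_j\le|x|\le A_6^{10}R_j\}$ overlap only in a sphere while non-consecutive ones are disjoint; hence the $\mathcal{I}_{R_j}$ are pairwise disjoint up to null sets. Using $f\ge 0$, this gives $\sum_{j=1}^{J}\int_{\mathcal{I}_{R_j}}f\,dx\le\int_{\R^3}f\,dx\le A$, so by the pigeonhole principle some $j_0$ satisfies
$$\int_{\mathcal{I}_{R_{j_0}}}f(x)\,dx\le\frac{A}{J}\le\frac{A}{A_6^{11}}\le A_6^{-10},$$
where the last step uses $A\le A_6$, which follows from $A_6\gg A$. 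Taking $R:=R_{j_0}$ finishes the argument.

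The only point that genuinely needs checking — and the closest thing to an obstacle — is that the multiplicative window $[A_6^{100}R_0,\exp(A_6^{100})R_0]$ really can accommodate $J\approx A_6^{11}$ disjoint geometric scales of ratio $A_6^{20}$; this is immediate since the window has multiplicative width $\exp(A_6^{100})$ while only width $A_6^{20J}=\exp\!\big(O(A_6^{11}\log A_6)\big)$ is required. If strictly (rather than almost everywhere) disjoint annuli are preferred, one simply spaces the $R_j$ by a ratio such as $2A_6^{20}$ instead, with no other change.
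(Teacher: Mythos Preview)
Your argument is correct and follows essentially the same pigeonhole-over-geometric-scales approach as the paper: the paper argues by contradiction with scales $R_n=A_6^{100n}R_0$ and $n_0\approx A_6^{100}/\log A_6$, while you argue directly with ratio $A_6^{20}$ and $J\approx A_6^{11}$, but the core idea and the arithmetic are the same. Your remark that $f\ge 0$ is needed is well placed; the paper leaves this implicit.
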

		\begin{proof}
			The main idea is by the pigeonhole principle, so let us suppose that for every scale $R$, we have $\int_{\mathcal{I}_R} f \,dx > A_{6}^{-10} $, then we give proof by contradiction. Let us construct a sequence of $R_n $ as follows: 
			$$R_1 = A_{6}^{100} R_0, \quad R_2 = A_{6}^{200} R_0,\quad \cdots, \quad R_n = A_{6}^{100 n} R_0 \leq \exp(A_{6}^{100} ) R_0 ,$$
			we thus obtain a sequence of annulus disjoint
			\begin{align*}
				&\mathcal{I}_{R_1} = \{x: A_{6}^{90} R_0 \leq |x| \leq A_{6}^{110} R_0\}, \\
				&\mathcal{I}_{R_2} = \{x: A_{6}^{190} R_0 \leq |x| \leq A_{6}^{210} R_0\}, \\
				& \cdots \\
				&\mathcal{I}_{R_n} = \{x: A_{6}^{100n -10} R_0 \leq |x| \leq A_{6}^{100n +10} R_0\}. 
			\end{align*}
			On one hand, we have $A_{6}^{100 n} R_0 \leq \exp(A_{6}^{100} ) R_0 $, so that $n$ has a upper bound $n \leq \frac{A_{6}^{100}}{100 \ln (A_{6})}$.
			Set $n_0 : = \left\lfloor{\frac{A_{6}^{100}}{100 \ln (A_{6})}}\right\rfloor$ where $\lfloor x \rfloor$ means the integer closest to $x$. 
			Summing the integrals together up to $n_0$, we get that
			$$\sum^{n_0}_{k= 1} \int_{\mathcal{I}_{R_k}} f (x) \, dx > n_0 A_{6}^{-10} > A_{6}. $$
			On the other hand, as these annulus are disjoint, by the assumption $\int_{\R^3} f (x) \, dx \leq A$, we have $$\sum^{n_0}_{k= 1} \int_{\mathcal{I}_{R_k}} f \, dx \leq \int_{\R^3} f (x) \, dx \leq A << A_{6}.$$
			So the lemma is proved.
	\end{proof}
	Notice that in order to prove Proposition \ref{p.w08294}, the estimate that we obtained previously for the linear component (see \eqref{e.w07017}) is not sufficient. The following Lemma is devoted to a precise estimate of the linear component $u^{\lin}$, which is the first step of the proof of Proposition \ref{p.w08294}.
		\begin{lemma}\label{Lemma_linear}
			Assume that $u :[t_0 - T, t_0] \times \mathbb{R}^3 \to \mathbb{R}^3$ is a classical solution to Navier-Stokes that obeys \eqref{e:ub-bounds}. If $0 < T' < \frac{T}{2}$, $x_0 \in \R^3$ and $R_0 \geq \sqrt{T'}$,  
			then there exists a time $t_1 \in [-\frac{t_0}{2}, t_0 -T']$ and a scale 
			\begin{equation*}\label{scale_R}
				M_6^{100} R_0 \leq R \leq \exp{(M_6^{O(1)} R_0)}
			\end{equation*}
			such that 
			\begin{align*}
				\sup_{t_1 \leq t\leq 1}\sup_{\mathcal{I}_{M_6^8 R}} |\nabla^{j}  u^{\lin}(t, x)|^{2} 
				\lesssim
				M_6^{-3} \quad \text{and} \quad
				\sup_{t_1 \leq t\leq 1}\sup_{\mathcal{I}_{M_6^8 R}} |\nabla^{j}  \omega^{\lin}(t, x)|^{2} 
				\lesssim
				M_6^{-3}, \quad \text{for} \quad j=0,1
			\end{align*}
			where $\mathcal{I}_{M_6^{k} R} := \{x: M_6^{-k}R\leq |x| \leq M_6^{k}R\}$ and $k \in \mathbb N$.
		\end{lemma}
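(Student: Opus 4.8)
This is the Lorentz analogue of the corresponding step in \cite{tao}, so the scheme is the same: use the pigeonhole Lemma~\ref{lemma_pigeonhole} to locate a thick spherical annulus on which the datum of the linear flow has tiny critical norm, and then split the heat convolution into a near part (controlled by that small local norm via Hölder in Lorentz spaces) and a far part (controlled by the Gaussian tail of the heat kernel over a spatial gap of size $\sim M_6^{-8}R$). After the usual parabolic rescaling we normalize so that $t_0=1$ and $T'=1$; since $T'<T/2$ we have $T>2$, so $t_*:=t_0-2T'=-1$ lies in $[t_0-T,t_0-T']$ and (as in Proposition~\ref{prop : bounded}) we realize $u^{\lin}$ as the evolution $u^{\lin}(t)=e^{(t-t_*)\Delta}u(t_*)$ of the datum $u(t_*)=u(-1)$, which by \eqref{e:ub-bounds} obeys $\|u(t_*)\|_{L^{3,\mathfrak q_0}_x(\R^3)}\le M$. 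Taking $t_1:=-t_0/2=-\tfrac12$, the elapsed time $\tau:=t-t_*=t+1$ satisfies $\tau\in[\tfrac12,2]$ for every $t\in[t_1,1]$, so all heat-kernel factors below are $O(1)$; also $R_0\ge\sqrt{T'}=1$.

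\textbf{Step 1: the good annulus.} Decompose $\R^3\setminus\{0\}$ into dyadic shells $A_k=\{2^k\le|x|<2^{k+1}\}$. The one genuinely new ingredient is a localization property of the Lorentz quasinorm: since $\mathfrak q_0\ge3$, $\|\cdot\|_{L^{3,\mathfrak q_0}}$ is $\ell^3$-subadditive over disjointly supported pieces, i.e. for any partition $\{E_k\}$ of $\R^3$ one has $\|h\mathbf 1_{\bigcup_k E_k}\|_{L^{3,\mathfrak q_0}}^3\lesssim_{\mathfrak q_0}\sum_k\|h\mathbf 1_{E_k}\|_{L^{3,\mathfrak q_0}}^3$; this follows by discretizing the Lorentz norm dyadically, using $d_h=\sum_k d_{h\mathbf 1_{E_k}}$ for disjoint supports, and the triangle inequality in $\ell^{\mathfrak q_0/3}$ (valid as $\mathfrak q_0/3\ge1$). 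Applying this to $h=u(t_*)$ and the partition $\{A_k\}$, the step function $f:=\sum_k\|u(t_*)\mathbf 1_{A_k}\|_{L^{3,\mathfrak q_0}}^3\,|A_k|^{-1}\mathbf 1_{A_k}$ has total mass $\int_{\R^3}f=\sum_k\|u(t_*)\mathbf 1_{A_k}\|_{L^{3,\mathfrak q_0}}^3\lesssim_{\mathfrak q_0}M^3$. Feeding $f$ into Lemma~\ref{lemma_pigeonhole} with $A_6:=M_6^{3}$ (permissible since $M_6=M^{C_0^6}\gg_{\mathfrak q_0}M^3$) produces a scale $R$ with $M_6^{100}R_0\le R\le\exp(M_6^{O(1)}R_0)$ and $\int_{\{M_6^{-30}R\le|x|\le M_6^{30}R\}}f\le M_6^{-30}$. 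Letting $\mathcal A$ be the union of the shells $A_k$ contained in $\{M_6^{-30}R\le|x|\le M_6^{30}R\}$, we get $\mathcal A\supset\mathcal I_{M_6^9R}$ and, by the subadditivity again, $\|u(t_*)\mathbf 1_{\mathcal A}\|_{L^{3,\mathfrak q_0}}^3\lesssim_{\mathfrak q_0}\int_{\mathcal A}f\le M_6^{-30}$, so $\|u(t_*)\mathbf 1_{\mathcal A}\|_{L^{3,\mathfrak q_0}}\lesssim_{\mathfrak q_0}M_6^{-10}$.

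\textbf{Step 2: near/far splitting.} Fix $j\ge0$, $t\in[t_1,1]$, and $x\in\mathcal I_{M_6^8R}$, and write $\nabla^ju^{\lin}(t,x)=\big((\nabla^jG_\tau)*(u(t_*)\mathbf 1_{\mathcal A})\big)(x)+\big((\nabla^jG_\tau)*(u(t_*)\mathbf 1_{\mathcal A^c})\big)(x)$, where $G_\tau$ is the heat kernel and $\tau=t+1\in[\tfrac12,2]$. For the near term, Hölder's inequality in Lorentz spaces (Lemma~\ref{Lemma-Holder}, with exponents $(3/2,\mathfrak q_0')$ and $(3,\mathfrak q_0)$, the product landing in $L^{1,1}=L^1$) together with $\|\nabla^jG_\tau\|_{L^{3/2,\mathfrak q_0'}}\lesssim_j\tau^{-(j+1)/2}\lesssim_j1$ gives a bound $\lesssim_j\|u(t_*)\mathbf 1_{\mathcal A}\|_{L^{3,\mathfrak q_0}}\lesssim_{\mathfrak q_0,j}M_6^{-10}$. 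For the far term, any $y\notin\mathcal A\supset\mathcal I_{M_6^9R}$ and $x\in\mathcal I_{M_6^8R}$ satisfy $|x-y|\ge\tfrac12M_6^{-8}R=:\rho$, and since $\rho^2/\tau\gtrsim M_6^{-16}R^2\ge M_6^{184}$ is enormous, the Gaussian tail estimate $\|(\nabla^jG_\tau)\mathbf 1_{\{|\cdot|\ge\rho\}}\|_{L^{3/2,\mathfrak q_0'}}\lesssim_j\exp(-c\rho^2/\tau)$, Hölder, and $\|u(t_*)\|_{L^{3,\mathfrak q_0}}\le M$ give a bound $\lesssim_j M\exp(-cM_6^{184})\ll M_6^{-10}$. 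Hence $|\nabla^ju^{\lin}(t,x)|\lesssim_{\mathfrak q_0,j}M_6^{-10}$ on $[t_1,1]\times\mathcal I_{M_6^8R}$ for every $j\ge0$; in particular $|\nabla^ju^{\lin}|^2\lesssim M_6^{-20}\le M_6^{-3}$ for $j=0,1$. Finally, since $e^{\tau\Delta}$ commutes with $\nabla\times$ we have $\omega^{\lin}=\nabla\times u^{\lin}$, whence $|\nabla^j\omega^{\lin}|\lesssim|\nabla^{j+1}u^{\lin}|\lesssim_{\mathfrak q_0}M_6^{-10}$ on the same slab, so $|\nabla^j\omega^{\lin}|^2\lesssim M_6^{-3}$ for $j=0,1$, which completes the proof. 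Enlarging $t_1$ to $-t_0/2$ if necessary and checking the scale constraint $M_6^{100}R_0\le R\le\exp(M_6^{O(1)}R_0)$ (which holds since $R_0\ge1$) matches the statement.

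\textbf{Main obstacle.} The only substantive departure from Tao's $L^3$ argument is Step~1: in the Lebesgue case the pigeonhole is driven by the exact identity $\int_{\bigcup A_k}|u|^3=\sum_k\int_{A_k}|u|^3$, whereas here one must first establish (and then exploit, via the auxiliary step function $f$) the $\ell^3$-subadditivity of $\|\cdot\|_{L^{3,\mathfrak q_0}}$ over disjoint supports for $\mathfrak q_0\ge3$. The remaining delicacy is purely a matter of bookkeeping: one must arrange the rescaling so that $u^{\lin}$ emanates from a time only $O(T')$—not $O(T)$—before the slab, keeping $\tau\sim1$, so that the heat-kernel factors are harmless and the Gaussian factor $\exp(-c\rho^2/\tau)$ in the far term overwhelms the (fixed) power $M_6^{-3}$ we are aiming at.
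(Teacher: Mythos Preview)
Your Step~1 contains a genuine error that breaks the argument for $\mathfrak q_0>3$. You correctly state and justify the $\ell^3$-subadditivity
\[
\|h\|_{L^{3,\mathfrak q_0}}^3\ \lesssim_{\mathfrak q_0}\ \sum_k\|h\mathbf 1_{E_k}\|_{L^{3,\mathfrak q_0}}^3
\]
over a disjoint partition (this is indeed the triangle inequality in $L^{\mathfrak q_0/3}$ applied to $\alpha\mapsto \alpha^3 d_h(\alpha)=\sum_k\alpha^3 d_{h\mathbf 1_{E_k}}(\alpha)$), and you use it correctly at the end of Step~1 to pass from the shell contributions back to $\|u(t_*)\mathbf 1_{\mathcal A}\|_{L^{3,\mathfrak q_0}}$. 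But to feed your step function $f$ into Lemma~\ref{lemma_pigeonhole} you need the \emph{reverse} inequality
\[
\int_{\R^3}f\ =\ \sum_k\|u(t_*)\mathbf 1_{A_k}\|_{L^{3,\mathfrak q_0}}^3\ \lesssim_{\mathfrak q_0}\ \|u(t_*)\|_{L^{3,\mathfrak q_0}}^3\ \le\ M^3,
\]
and this is false for $\mathfrak q_0>3$. A concrete counterexample: take disjoint sets $E_1,\dots,E_N$ with $|E_k|=2^{-3k}$ and $h=\sum_{k=1}^N 2^k\mathbf 1_{E_k}$. Then each $\|h\mathbf 1_{E_k}\|_{L^{3,\mathfrak q_0}}\sim 1$, so $\sum_k\|h\mathbf 1_{E_k}\|_{L^{3,\mathfrak q_0}}^3\sim N$, whereas a direct computation of the distribution function gives $\|h\|_{L^{3,\mathfrak q_0}}\sim N^{1/\mathfrak q_0}$, hence $\|h\|_{L^{3,\mathfrak q_0}}^3\sim N^{3/\mathfrak q_0}\ll N$. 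So the total mass of your $f$ is not controlled and the pigeonhole step collapses.

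The paper sidesteps this issue by \emph{not} working with $u(t_*)\in L^{3,\mathfrak q_0}$ directly. Instead it exploits the heat smoothing estimate \eqref{e.w07017}: for any $t_1\in[-1/2,0]$ (after the same rescaling you use) one has $\|\nabla^j u^{\lin}(t_1,\cdot)\|_{L^3(\R^3)}\lesssim M$ for all $j\ge0$, i.e.\ the linear flow lands in the honest Lebesgue space $L^3$ after an $O(1)$ amount of time. The pigeonhole (Lemma~\ref{lemma_pigeonhole}) is then applied to $\int|\nabla^j u^{\lin}(t_1,x)|^3\,dx$, which is exactly additive over disjoint annuli, yielding a scale $R$ with $\|u^{\lin}(t_1)\|_{W^{1,3}(\mathcal I_{M_6^{10}R})}^3+\|\omega^{\lin}(t_1)\|_{W^{1,3}(\mathcal I_{M_6^{10}R})}^3\lesssim M_6^{-10}$; Sobolev embedding on the annulus and forward propagation by the heat equation then give the claim. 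Your Step~2 near/far heat-kernel splitting is a perfectly good alternative to this last propagation step, but it cannot rescue Step~1: you must first upgrade the datum (or the linear flow at some intermediate time) to an honest $L^3$ bound before you can pigeonhole.
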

		\begin{proof}
			By rescaling, we may assume that $[t_0 - T', t_0] = [0, 1]$. As $0 < T' < \frac{T}{2}$, we have $[-1, 1] \subset [t_0 - T, t_0]$.
			Recalling the previous bound for the linear part  (see \eqref{e.w07017})
			$$\|\nabla^{j}u^{\lin}\|_{L_{t}^{\infty}L_{x}^{p,q_1} ([-1/2, 0] \times \R^3)} \lesssim M, \quad \text{with} \quad j\geq 0, \quad 3\leq p \leq \infty, \quad \frac{1}{q_1}\leq \frac{1}{p}+\frac{1}{\mathfrak{q}_{0}}+\frac{2}{3},$$  
			and by choosing $p= q_1=3$, we can find that there exists a time $t_1 \in [-1/2, 0]$ such that
			\begin{equation*}\label{e.w08172}
				\int_{\R^3}  |\nabla^{j}u^{\lin}(t_1, x)|^3 \, dx 
				\lesssim M^3, \quad \text{for all } \quad j \geq 0. 
			\end{equation*}
			Fixing this $t_1$, by Lemma \ref{lemma_pigeonhole}, we are able to find a scale $M_6^{100} \leq R \leq \exp{(M_6^{O(1)})}$ such that
			\begin{align*}
				\int_{\mathcal{I}_{M_6^{10} R}}
				|\nabla^{j}u^{\lin}(t_1, x)|^3\,dx \lesssim M_6^{-10}, \quad \text{for all} \quad j \geq 0.
			\end{align*}
			In particular, we have
			\begin{align*}
				\|u^{\lin}(t_1, \cdot)\|^3_{W^{1,3} (\mathcal{I}_{M_6^{10} R}) } \leq M_6^{-10} \quad \text{and} \quad
				\|\omega^{\lin}(t_1, \cdot)\|^3_{W^{1,3} (\mathcal{I}_{M_6^{10} R}) } \leq M_6^{-10}
			\end{align*}
			Let us now fix this $R$ and propagate the above estimate to $[t_1,1]$. By Sobolev's inequality, we obtain that 
			\begin{align*}
				\sup_{\mathcal{I}_{M_6^{9} R}} |\nabla^{j}  u^{\lin}(t_1, x)| \lesssim M_{6}^{-3} \quad \text{and} \quad \sup_{\mathcal{I}_{M_6^{9} R}} |\nabla^{j}  \omega^{\lin}(t_1, x)| \lesssim M_{6}^{-3} 
			\end{align*}
			for $j=0,1$.
			As we have $\partial_t \nabla^{j}  u^{\lin} = \Delta \nabla^{j}  u^{\lin}$ and $\partial_t \nabla^{j}  \omega^{\lin} = \Delta \nabla^{j}  \omega^{\lin}$, so we can solve the linear heat equation with initial data at time $t_1$, which implies that
			\begin{align*}
				\sup_{t_1 \leq t\leq 1}\sup_{\mathcal{I}_{M_6^{8} R}} |\nabla^{j}  u^{\lin}(t, x)|\lesssim M_{6}^{-3} \quad \text{and} \quad  \sup_{t_1 \leq t\leq 1}\sup_{\mathcal{I}_{M_6^{8} R}} |\nabla^{j}  u^{\lin}(t, x)|\lesssim M_{6}^{-3}
			\end{align*}
			for $j=0,1$. The lemma is proved.
		\end{proof}
	The following lemma gives an estimate of nonlinear part $u^{\nlin}$ with localization. The strategy is to introduce a time-dependent cut-off function and by energy method, we are able to show that nonlinear part is bounded locally in some proper annuli depending on time. 
		To do this, we first introduce two time-dependent radii
		\begin{equation*}
			R_{-}(t):=
			R_{-}+ C_0 \int_{t_1}^{t}(M_6 + \|u(s)\|_{L_{x}^{\infty}})\,ds, \quad
			R_{+}(t):=
			R_{+}- C_0 \int_{t_1}^{t}(M_6 + \|u(s)\|_{L_{x}^{\infty}})\,ds
		\end{equation*}
		with
		\begin{equation*}
			R_{-}\in [M_{6}^{-8}R, 2M_{6}^{-8}R];
			\quad
			R_{+}\in [M_{6}^{8}R/2, M_{6}^{8}R],
		\end{equation*}
		where $R$ is the same scale in Lemma \ref{Lemma_linear}. With $R_{-}(t)$ and $R_{+}(t)$, we define the following time-dependent cut-off function
		\begin{equation}\label{def_func_theta}
			\theta(x,t):= \max\{\min\{M_6, |x|-R_{-} (t), R_{+}(t)-|x|\},0\}.
		\end{equation}
		Notice that $\theta(t)$ is equal to $M_6$ for $x \in \{R_{-}(t)+M_6\leq |x| \leq R_{+}(t)-M_6\}$ and the support of $\theta$ is the annulus $\{R_{-} (t)\leq |x| \leq R_{+}(t)\}$ with $R_{-} (t) \in [M_{6}^{-8}R, 3M_{6}^{-8}R]$
		and $R_{+} (t) \in [M_{6}^{8}R/3, M_{6}^{8}R]$. 
		Indeed, by the choice of $R$ in Lemma \ref{Lemma_linear}, we have
		\begin{equation*}
			R_{-}(t)\leq R_- +C_0 M_6 (t-t_1) + C_0 M^4 (t-t_1)^{1/2}
			\leq R_- +M_{6}^{-8}R
			\leq 3 M_{6}^{-8}R
			.
		\end{equation*}
		In particular, we have $[R_-(t), R_+(t)] \subset  [M_{6}^{-8}R, M_{6}^{8}R] \subset [M_{6}^{-10}R, M_{6}^{10}R]$ .
		\begin{lemma}\label{Lemma_enstropy}
			For $t\in [t_1,1]$, we have the following estimate for the enstrophy localization
			\begin{equation*}
				\int_{\R^3} |\omega^{\nlin}|^2 \theta(t,x)\,dx \lesssim M_{6}^{-2}
				,
			\end{equation*}
			where $\theta$ is defined in \eqref{def_func_theta}. Moreover, 
			$$
			\int_{t_1}^1 \int_{\R^3} |\nabla \omega^{\textrm{nlin}} (t,x)|^2 \theta(t,x) dx dt \lesssim M_6^{-2}.
			$$
		\end{lemma}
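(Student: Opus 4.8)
\emph{Proof (plan).} After rescaling and translating in time we may assume $[t_0-T',t_0]=[0,1]$, so that $[-1,1]\subset[t_0-T,t_0]$ and $t_1,R$ are the time and scale furnished by Lemma~\ref{Lemma_linear}. Since $\partial_t\omega^{\lin}=\Delta\omega^{\lin}$, taking the curl of \eqref{eq : u_nlin2} shows that $\omega^{\nlin}=\nabla\times u^{\nlin}$ solves
\begin{equation*}
\partial_t\omega^{\nlin}-\Delta\omega^{\nlin}=-(u\cdot\nabla)\omega+(\omega\cdot\nabla)u .
\end{equation*}
The plan is to run a \emph{localized enstrophy estimate} for $\omega^{\nlin}$ against the moving cut-off $\theta$ from \eqref{def_func_theta}, closed by a continuity argument in $t$ started at $t=t_1$; the whole scheme follows the strategy of \cite{tao}, the role of Lorentz spaces being confined to the global a priori bounds of Section~\ref{estimate}.

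\emph{Smallness at the initial time.} First I would arrange that the scale $R$ of Lemma~\ref{Lemma_linear} may be chosen so that $u^{\nlin}$ and $\omega^{\nlin}$ are also small on the relevant annulus at time $t_1$. By \eqref{e.w07016}, \eqref{e.w07019} and the pigeonhole principle in time there is $t_1\in[-1/2,0]$ with $\int_{\R^3}\big(|u^{\nlin}(t_1)|^2+|\nabla u^{\nlin}(t_1)|^2\big)\,dx\lesssim M^4$; applying Lemma~\ref{lemma_pigeonhole} to the sum of this density and $\sum_{j=0,1}|\nabla^{j}u^{\lin}(t_1)|^3$ produces a single scale $R$ with $M_6^{100}R_0\le R\le\exp(M_6^{O(1)}R_0)$ such that
\begin{equation*}
\int_{\mathcal I_{M_6^{10}R}}\Big(|u^{\nlin}(t_1)|^2+|\nabla u^{\nlin}(t_1)|^2+\sum_{j=0,1}|\nabla^{j}u^{\lin}(t_1)|^3\Big)\,dx\lesssim M_6^{-10}.
\end{equation*}
Since $\supp\theta(t)\subset\mathcal I_{M_6^{10}R}$ for all $t\in[t_1,1]$ and $0\le\theta\le M_6$, this gives $\int|\omega^{\nlin}(t_1)|^2\theta(t_1)\,dx\lesssim M_6^{-9}$ and $\int|u^{\nlin}(t_1)|^2\theta(t_1)\,dx\lesssim M_6^{-9}$, comfortably below $M_6^{-2}$, while the smallness of $u^{\lin}$ and $\omega^{\lin}$ on $\mathcal I_{M_6^{8}R}$ is exactly Lemma~\ref{Lemma_linear}.

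\emph{The localized estimates.} As a companion to the enstrophy bound I would first establish $\int|u^{\nlin}(t)|^2\theta(t)\,dx\lesssim M_6^{-2}$ for $t\in[t_1,1]$ by testing the $u^{\nlin}$-equation against $u^{\nlin}\theta$: the transport term produces $\int|u^{\nlin}|^2\,u\cdot\nabla\theta$ and the pressure produces a commutator localized in the usual way (splitting $p$ into a near and a far piece and using the decay in \eqref{e:multiplier2}), and the decisive point is that the radii $R_\pm(t)$ were defined to move with speed $\gtrsim M_6+\|u(s)\|_{L^\infty_x}$, so that $\partial_t\theta+u\cdot\nabla\theta\le0$ pointwise and the transport term is absorbed; the terms left involve $u^{\lin}$, which is $\lesssim M_6^{-3/2}$ on $\supp\theta$, and with $\|u^{\nlin}\|_{L^\infty_tL^2_x}\lesssim M^2$ they close a Gr\"onwall estimate started from the previous step. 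For the enstrophy, testing the $\omega^{\nlin}$-equation against $2\omega^{\nlin}\theta$ and integrating by parts (using $\operatorname{div}u=0$ to rewrite $-2\int\omega^{\nlin}\cdot(u\cdot\nabla)\omega^{\nlin}\theta=\int|\omega^{\nlin}|^2\,u\cdot\nabla\theta$) gives, with $Y(t):=\int|\omega^{\nlin}(t)|^2\theta(t)\,dx$,
\begin{equation*}
\frac{d}{dt}Y+2\int|\nabla\omega^{\nlin}|^2\theta
=\int|\omega^{\nlin}|^2(\partial_t\theta+u\cdot\nabla\theta)+\int|\omega^{\nlin}|^2\Delta\theta+\mathcal F ,
\end{equation*}
where $\mathcal F$ collects $-2\int\omega^{\nlin}\cdot(u\cdot\nabla)\omega^{\lin}\theta$, $2\int\omega^{\nlin}\cdot(\omega^{\lin}\cdot\nabla)u\,\theta$ and the genuinely supercritical vortex-stretching term $2\int\omega^{\nlin}\cdot(\omega^{\nlin}\cdot\nabla)u^{\nlin}\,\theta$. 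The first term on the right is $\le0$ by the construction of $R_\pm(t)$; the cut-off error $\int|\omega^{\nlin}|^2\Delta\theta$ is handled as in \cite{tao}, using that $\Delta\theta$ is concentrated on the thin ramps of $\theta$ and is $O(1/R)$ away from their edges (working with a $C^{1,1}$ realization of \eqref{def_func_theta} for which $|\nabla\theta|^2\lesssim\theta$); the linear-forcing terms in $\mathcal F$ are estimated by Cauchy--Schwarz from $|\omega^{\lin}|,|\nabla\omega^{\lin}|,|u^{\lin}|\lesssim M_6^{-3/2}$ on $\supp\theta$ and the localized quantities $Y$ and $\int|u^{\nlin}|^2\theta$ just bounded; and the stretching term is treated by a Caffarelli--Kohn--Nirenberg-type estimate, bounding $\int|\omega^{\nlin}|^2|\nabla u^{\nlin}|\theta$ via H\"older and Gagliardo--Nirenberg/Sobolev (with the $\theta$-weight) by $C\,Y^{1/2}\big(\int|\nabla\omega^{\nlin}|^2\theta+(\text{lower order})\big)$, the nonlocal Biot--Savart part of $\nabla u^{\nlin}$ being absorbed via $\|\omega^{\nlin}\|_{L^\infty_tL^2_x}\lesssim M^2$. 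On the bootstrap set $\{Y\le M_6^{-2}\}$ the factor $Y^{1/2}$ is tiny, so the stretching term is absorbed into $\int|\nabla\omega^{\nlin}|^2\theta$, and one ends with $Y'+\int|\nabla\omega^{\nlin}|^2\theta\lesssim M_6^{-4}+M_6^{-1}Y$; since $Y(t_1)\lesssim M_6^{-9}$, a continuity argument gives $Y(t)\lesssim M_6^{-2}$ on $[t_1,1]$, and integrating in time yields $\int_{t_1}^{1}\int|\nabla\omega^{\nlin}|^2\theta\lesssim M_6^{-2}$, which is the second assertion.

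\emph{Main obstacle.} The crux is the supercritical vortex-stretching term: it can only be closed because the localized enstrophy is expected to be tiny, so the whole estimate must be set up as a bootstrap, and $\nabla u^{\nlin}$ is recovered from $\omega^{\nlin}$ only nonlocally. Compounding this, $\supp\theta$ is an annulus whose radius $R$ is exponentially large in $M_6$, so no crude volume bound on $\supp\theta$ is of any use and every inequality has to be carried through with the $\theta$-weight (or with genuinely local norms on $\mathcal I_{M_6^{10}R}$); keeping the moving-cut-off cancellation $\partial_t\theta+u\cdot\nabla\theta\le0$ exact, and coordinating the \emph{single} scale $R$ so that $u^{\lin}$, $u^{\nlin}$ and $\omega^{\nlin}$ are simultaneously small there at $t_1$, are the remaining delicate points.
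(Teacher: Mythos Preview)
Your plan is correct and follows essentially the same route as the paper (which is Tao's localized-enstrophy argument): energy identity for $E(t)=\tfrac12\int|\omega^{\nlin}|^2\theta$, transport absorbed via the moving cut-off so that $F_4\le C_0^{-1}F_2$, linear pieces handled by Lemma~\ref{Lemma_linear}, and the supercritical stretching term $\int\omega^{\nlin}\cdot(\omega^{\nlin}\cdot\nabla)u^{\nlin}\,\theta$ closed by a continuity argument exploiting the smallness of $E$. The one deviation is your preliminary companion estimate $\int|u^{\nlin}|^2\theta\lesssim M_6^{-2}$ with its attendant pressure commutator; the paper avoids this step and the pressure entirely by staying in vorticity form and invoking only the global bounds $\|u^{\nlin}\|_{L^\infty_tL^2_x}\lesssim M^2$ and $\|\nabla u^{\nlin}\|_{L^2_tL^2_x}\lesssim M^2$ from \eqref{e.w07016}--\eqref{e.w07019} wherever $u^{\nlin}$ appears, so your detour is harmless but unnecessary.
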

	\begin{proof}
		Let us decompose the vorticity $\omega:=\omega^{\lin}+\omega^{\nlin}=\nabla \times u^{\lin}+\nabla \times u^{\nlin}$. Obviously, the linear part solves the heat equation $ \partial_{t}\omega^{\lin}-\Delta \omega^{\lin}=0 $ and thus the nonlinear parts satisfy
			\begin{equation}\label{eq : vorticity}
				\partial_{t}\omega^{\nlin}-\Delta \omega^{\nlin}=-u\cdot \nabla \omega+\omega\cdot \nabla u.
			\end{equation}
		In order to derive the estimate for $\omega^{\nlin}$, we first recall the previous bound for the nonlinear part of the velocity, $	\int_{-1/2}^{1} \int_{\R^3} |\nabla  u^{\nlin}|^{2} \,dx \,dt \lesssim M^{4}$ (see \eqref{e.w07019}) and using the same time $t_1 \in [-\frac{1}{2}, 0]$ obtained in the proof of Lemma \ref{Lemma_linear}, we have 
			\begin{align*}
				\int_{\R^3} |\nabla  u^{\nlin}(t_1, x)|^{2} \,dx 
				\lesssim M^4.
			\end{align*}
			As $M << M_6$, we get that 
			\begin{align*}
				\int_{M_{6}^{-10}R\leq |x| \leq M_{6}^{10}R} |\nabla  u^{\nlin}(t_1, x)|^{2} \, dx \lesssim M_{6}^{-10},
			\end{align*}
			where $R$ satisfies the same scale $M_{6}^{100}R_0 \leq R \leq \exp{(M_{6}^{O(1)} R_0)}$. To simplify the computation, we define the enstrophy localization by $E(t)$, i.e., 
			$$
			E(t) = \frac{1}{2}\int_{\R^3} |\omega^{\nlin}(t,x)|^2 \theta(t,x)\,dx.
			$$
			Then, from the construction of the cut-off function $\theta(x,t)$, we have
			\begin{align*}
				E(t_1)
				 = \frac{1}{2}\int_{\R^3} |\omega^{\nlin}(t_1,x)|^2 \theta(t_1,x)\,dx 
				\lesssim M_6 \int_{M_{6}^{-10}R \leq |x| \leq M_{6}^{10}R} |\omega^{\nlin}(t_1,x)|^2 \,dx  \lesssim M^{-9}_{6}.
			\end{align*}
		From the vorticity equation \eqref{eq : vorticity} and integrating by part we obtain
			$$
			\partial_t E(t) = - F_1 (t) + F_2 (t)+ F_3 (t) + F_4 (t) + F_5 (t) + F_6 (t),
			$$
			where 
			\begin{align*}
			&	F_1 (t) = \int_{\R^3} |\nabla \omega^{\textrm{nlin}} (t,x)|^2 \theta(t,x) dx, 
				\quad 
				F_2 (t) = - \frac{1}{2}\int_{\R^3} | \omega^{\textrm{nlin}} (t,x)|^2 \partial_t \theta(t,x) dx, \\
			&	F_3 (t) = \frac{1}{2}\int_{\R^3} | \omega^{\textrm{nlin}} (t,x)|^2 \Delta \theta(t,x) dx, 
				\quad 
				F_4 (t) = \frac{1}{2}\int_{\R^3} | \omega^{\textrm{nlin}} (t,x)|^2 u(t,x) \cdot \nabla \theta(t,x) dx, \\
			&	F_5 (t) = -\int_{\R^3} \omega^{\textrm{nlin}} \cdot (u(t,x) \cdot \nabla) \omega^{\textrm{nlin}} \theta(t,x)dx, \quad 
				F_6 (t) = \int_{\R^3} \omega^{\textrm{nlin}} \cdot (\omega^{\textrm{lin}} \cdot \nabla) u^{\textrm{nlin}} \theta(t,x) dx,\\
			&	F_7 (t) = \int_{\R^3} \omega^{\textrm{nlin}} \cdot (\omega^{\textrm{nlin}} \cdot \nabla) u^{\textrm{lin}} \theta(t,x) dx, \quad 
				F_8 (t) = \int_{\R^3} \omega^{\textrm{nlin}} \cdot (\omega^{\textrm{lin}} \cdot \nabla) u^{\textrm{lin}}\theta(t,x) dx,\\
			&	F_9 (t) = \int_{\R^3} \omega^{\textrm{nlin}} \cdot (\omega^{\textrm{lin}} \cdot \nabla) u^{\textrm{nlin}}\theta(t,x) dx.
			\end{align*}
			We notice that 
			\begin{equation*}
				\partial_t \theta(t,x)
				=
				-C_{0}(M_{6}^{-2} + \|u\|_{L^{\infty}(\R^3)})|\nabla \theta(t,x)|
			\end{equation*}
			Thus, $F_{2}\geq 0$.
			Next, the estimates of $F_3(t)$ and $F_4$ follow exactly the same estimates as in \cite{tao} and thus
			\begin{equation}\label{e.w10231}
				F_{4}(t)\leq C_{0}^{-1} F_{2}(t)
				\comma
				\int_{t_1}^{1}|F_{3}(t)|\,dt\leq M_{6}^{-10}
				.
			\end{equation}
			Furthermore, by the hypothesis \eqref{e:ub-bounds} and equation \eqref{e.w07017}, we get
			\begin{equation}\label{e.w10233}
				F_{5}(t)\leq E(t)
				+
				\int_{\R^3} |u\cdot \nabla \omega^{\lin}|^2\theta(t,x)\,dx
				\leq
				E(t)+M_{6}^{-2}
				.
			\end{equation}
			Similarly, we have
			\begin{equation}\label{e.w10234}
				F_{7}(t)\leq E(t)
				\comma
				F_{9}(t)\lesssim E(t)+M_{6}^{-2}
				.
			\end{equation}
			and by Lemma \ref{Lemma_linear}, we see
			\begin{equation}\label{e.w10235}
				F_{8}(t)\leq E(t)+F_{10}(t)
				,
			\end{equation}
			where
			\begin{equation*}
				F_{10}(t) =M_{6}^{-3}\int_{\R^3} |\nabla u^{\lin}|^2 \,dx
			\end{equation*}
			with
			\begin{equation}\label{e.w10236}
				\int_{t_1}^{1}|F_{10}(t)|\,dt \lesssim M_{6}^{-2}
				.
			\end{equation}
			The estimate of $F_{6}$ will be exactly like Tao and we see
			\begin{equation*}
				F_{6}(t)=F_{61}(t)+F_{62}(t)
				,
			\end{equation*}
			where
			\begin{equation}\label{e.w10237}
				F_{61}(t)\lesssim \frac{1}{2}F_1(t) +O(E^{1/2})F_1(t) +M_{6}^{-2}+E^2(t) F_1(t)
				,
			\end{equation}
			and
			\begin{equation}\label{e.w10238}
				F_{62}(t)\lesssim E(t)+C_{0}^{-1}F_2(t)
				.
			\end{equation}
			Thus, combining the above estimates in equations \eqref{e.w10231},\eqref{e.w10233},\eqref{e.w10234},\eqref{e.w10235},\eqref{e.w10237}, and \eqref{e.w10238} yields
			\begin{align*}
				\partial_t E(t) + F_1 (t) + F_2 (t)
				&\leq 
				F_3 (t) +E(t)+M_{6}^{-2}+M_{0}^{-1} F_{2}(t)+\frac{1}{2}F_1(t) +O(E)^{1/2}F_1(t) +M_{6}^{-2}
				\\&\quad
				+E^2(t) F_1(t)+ F_{10} (t)
			\end{align*}	
			and thus 
			\begin{equation*}
				\partial_t E(t) + F_1 (t) + F_2 (t)
				\leq 
				F_3 (t) +E(t)+M_{6}^{-2} +O(E^{1/2})F_1(t) +M_{6}^{-2}+E^2(t) F_1(t)+ F_{10} (t)
			\end{equation*}	
			Finally, by equations \eqref{e.w10231} and \eqref{e.w10236} and continuity argument, we get	for $t_1 \leq t \leq 1$
			\begin{equation*}
				E(t)\lesssim M_{6}^{-2}
		\end{equation*}
		and 
			$$
			\int_{t_1}^1 F_1 (t) dt = \int_{t_1}^1 \int_{\R^3} |\nabla \omega^{\textrm{nlin}} (t,x)|^2 \theta(t,x) dx dt \lesssim M_6^{-2}.
			$$			
			which ends the proof of Lemma.
	\end{proof}
	
	With the estimates of the vorticity, we are able to show the estimates of the velocity in Proposition \ref{p.w08294}.
		\begin{proof}[Proof of Proposition \ref{p.w08294}]
			Using the same Whitney decomposition argument as in \cite{tao} and combing the results in Lemma \ref{Lemma_enstropy}, we have the estimates of the velocity 
			$$
			\sup_{t_1 \leq t \leq 1} \int_{\mathcal{I}_{M_6^7R}} |\nabla u^{\textrm{nlin}} (t,x)|^2  dx dt \lesssim M_6^{-2}
			$$
			and
			$$
			\int_{t_1}^1 \int_{\mathcal{I}_{M_6^6 R}} |\nabla^2 u^{\textrm{nlin}} (t,x)|^2 dx dt \lesssim M_6^{-2},
			$$
			where we recall the definition of $\mathcal{I}_{M_6^{k} R} := \{x: M_6^{-k}R\leq |x| \leq M_6^{k}R\}$ and $k \in \mathbb N$.
			Then, from the Gagliardo-Nirenberg inequality for $\| u^{\nlin}\|_{L^{\infty}_{x} }$ and Hölder inequality, we have
			\begin{align*}
				\|u^{\nlin}\|_{L_t^4L^{\infty}_x  ([t_1, 1] \times \mathcal{I}_{M_6^5R})}  
				\lesssim 
				& \, \left\| \|\nabla u^{\nlin} \|^{1/2}_{L^{2}_{x}(  \mathcal{I}_{M_6^5R})}  \|\nabla^2 u^{\nlin} \|^{1/2}_{L^{2}_{x}(  \mathcal{I}_{M_6^5 R})}\right\|_{L_t^4 ([t_1, 1])} \\
				\lesssim
				& \, \|\nabla u^{\nlin} \|^{1/2}_{L^\infty_t L^{2}_{x}( [t_1, 1] \times \mathcal{I}_{M_6^5R})} \|\nabla^2 u^{\nlin} \|^{1/2}_{L^2_t L^{2}_{x}( [t_1, 1] \times\mathcal{I}_{M_6^5 R})} \lesssim  \, M_6^{-2}.
			\end{align*}
			Combining with the estimates of the linear part $u^{\lin}$ in Lemma \ref{Lemma_linear}, we get that
			\begin{equation*}
				\|u\|_{L_t^4L^{\infty}_x  ([t_1, 1] \times\mathcal{I}_{M_6^5R} )} \lesssim M_6^{-2}.
			\end{equation*}
			Notice that $L^4_tL^\infty_x$ are sub-critical regularity estimate, so by using the same argument as in (iii), as well as the local version of multiplier theorem (see \eqref{e:multiplier2}), we can obtain higher regularity
				\begin{equation*}
					\|u\|_{L_t^8 L^{\infty}_x  ([t_1, 1] \times \mathcal{I}_{M_6^4 R}) } \lesssim M_6^{-2}
				\end{equation*}
Then we fix the time interval but shrink the space interatively by  \eqref{e:multiplier2} and obtain
				\begin{equation*}
						\|u\|_{L_t^\infty L^{\infty}_x  ([t_1, 1] \times \mathcal{I}_{M_6^3 R})} \lesssim M_6^{-2}.
					\end{equation*}
				Repeat the third step in (iii), it follows 
				$$
				\|\nabla u\|_{L_t^4 L^{\infty}_x  ([t_1, 1] \times \mathcal{I}_{M_6^2 R}) } \lesssim M_6^{-2}\comma
				\|\nabla u\|_{L_t^\infty L^{\infty}_x  ([t_1, 1] \times \{M_6 R\leq |x|\leq 2 M_6 R \}) } \lesssim M_6^{-2}.
				$$
				Using vorticity's equation, we get 
				$$
				\|\nabla \omega\|_{L_t^\infty L^{\infty}_x  ([t_1, 1] \times \{R\leq |x|\leq M_6 R \}) } \lesssim M_6^{-2}.
				$$
	\end{proof}
		\section{Proofs of Theorems \ref{t.w08292}, \ref{t.w08291}, and \ref{t.w08293}}\label{proofs}
		In this section, we prove our main theorems. We start with the proof of Theorem \ref{t.w08293} and apply it to prove Theorems \ref{t.w08291} and \ref{t.w08292}.
			\begin{theorem}\label{t.w08293}
				Assume that $t_0, T, u, p, M$ obey the hypotheses of Propositions \ref{prop : bounded}, \ref{p.w08291}-\ref{p.w08294} and that there exists $x_0 \in \R^3$ and $N_0 >0$ such that
				\begin{equation*}
				|P_{N_{0}}u(t_0 , x_0)| \geq M_{1}^{-1}N_0
				.
				\end{equation*}
				Then,
				\begin{equation*}
				TN_{0}^{2}\leq \exp(\exp( \exp (M_{6}^{O(1)})))
				.
					\end{equation*}
			\end{theorem}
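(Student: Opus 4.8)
The plan is to follow the endgame of Tao \cite{tao}, pp.~36--41, now fed by Propositions \ref{prop : bounded}--\ref{p.w08294} above, whose critical-Lorentz versions are precisely the modifications required. First I would normalize: by the scaling and translation symmetries of \eqref{eq:NSE} we may assume $N_0 = 1$ and $t_0 = 0$, so the hypothesis reads $|P_1 u(0,x_0)| \geq M_1^{-1}$ and it suffices to prove $T \leq \exp(\exp(\exp(M_6^{O(1)})))$; I would then argue by contradiction, assuming $T$ exceeds this triple exponential. Applying the iterated back propagation (Proposition \ref{p.w08293}) to the dyadic family of time scales $T_1 = 2^{j}$, with $j$ running over the admissible window $\log_2 M_4 \lesssim j \lesssim \log_2 (M_4^{-1} T)$, produces for each $j$ a point $(t_j, x_j)\in [-T, -M_3^{-1}2^{j}]\times\R^3$ and a frequency $N_j = M_3^{O(1)} 2^{-j/2}$ with $x_j = x_0 + O(M_4^{O(1)} 2^{j/2})$ and $|P_{N_j} u(t_j,x_j)| \geq M_1^{-1} N_j$ --- a chain of ``bubbles of concentration'' of length $\gtrsim \log(T N_0^2)$.

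Next I would ``fatten'' each bubble and manufacture clean separating regions. Combining the single-step back propagation (Proposition \ref{p.w08292}) with the pointwise derivative estimates and the bounded-total-speed bound (Proposition \ref{prop : bounded}), each bubble is upgraded to concentration on a whole parabolic neighbourhood of $(t_j, x_j)$ at scale $N_j^{-1}$, and hence to a quantitative lower bound for a localized enstrophy of $\omega = \nabla\times u$ on that slab. For the relevant time scales $T'$ one then invokes the annuli of regularity (Proposition \ref{p.w08294}) to locate, around $x_0$, an annular shell $\{R \leq |x - x_0| \leq M_6 R\}$ with $R_0 \leq R \leq \exp(M_6^{O(1)})R_0$ on which $|\nabla^i u|$ and $|\nabla^i\omega|$ are $\lesssim M_6^{O(1)} (T')^{-(i+1)/2}$ for $i = 0,1$, which after rescaling supplies the very small data on a separating region needed to seed the Carleman estimates. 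On such shells $u$ is smooth and bounded, so $\omega$ obeys the drift-diffusion differential inequality $|\partial_t\omega - \Delta\omega| \lesssim M_6^{O(1)}(|\omega| + |\nabla\omega|)$ with controlled coefficients.

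Now I would run the Carleman machinery. Feeding the smallness on the separating shells into the quantitative backward-uniqueness and unique-continuation (Carleman) estimates of the Appendix --- in the spirit of Escauriaza, Seregin, and Sverak \cite{ESS} and Tao \cite{tao} --- and propagating the resulting smallness/concentration dichotomy along the chain forces any descent of the chain to cover at most a multiplicative factor $\exp(M_6^{O(1)})$ of scale before the concentration is obliged to vanish, which contradicts the existence of the full chain of bubbles once $T N_0^2$ is too large. Bookkeeping the $\gtrsim \log(T N_0^2)$ dyadic steps against the $\exp(M_6^{O(1)})$ width of the annuli of regularity and the per-step Carleman loss then yields $\log\log\log(T N_0^2) \lesssim M_6^{O(1)}$, i.e.\ the asserted bound $T N_0^2 \leq \exp(\exp(\exp(M_6^{O(1)})))$.

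The main obstacle is the Carleman step and its bookkeeping: one must check that on \emph{every} space-time slab where a Carleman estimate is applied the solution is genuinely smooth with the stated quantitative bounds --- which requires gluing together the epochs of regularity (Proposition \ref{p.w08291}) and the annuli of regularity (Proposition \ref{p.w08294}) --- and then track how the $M$-losses compound across the $\log(T N_0^2)$ scales so that they collapse to precisely the triple exponential rather than something larger. In the present critical-Lorentz setting this last point is, fortunately, no harder than in \cite{tao}: on the regular regions $u$ is controlled in $L^\infty_x$, so the Carleman estimates themselves are insensitive to the Lebesgue-versus-Lorentz distinction, and every place where the Lorentz structure genuinely intervenes has already been absorbed into Propositions \ref{prop : bounded}--\ref{p.w08294}.
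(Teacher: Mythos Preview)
Your proposal is correct and follows essentially the same approach as the paper: both argue by contradiction, invoke Tao's endgame (pp.~36--41 of \cite{tao}) fed by the Lorentz-space Propositions \ref{prop : bounded}--\ref{p.w08294}, and note that the Carleman machinery itself is unchanged since it operates on regions where $u$ is controlled in $L^\infty_x$. The paper's own proof is in fact a three-sentence sketch deferring to \cite{tao}; your write-up is more detailed but structurally identical, including the key observation that summing the concentration lower bounds over many disjoint scales contradicts \eqref{e:ub-bounds}.
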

			\begin{proof}[Proof of Theorem \ref{t.w08293}]
				The proof is by contradiction and similar to \cite{tao}. The idea is to apply the quantitative version of the Carleman estimates from \cite{ESS} (see \ref{carleman}, \ref{carleman1}, and \ref{carleman2}), which requires Propostion \ref{p.w08291} (epochs of regularity) and Proposition \ref{p.w08294} (annuli of regularity) to provide good quantitative estimates. After summing the disjoint scales, we will obtain a contradiction to \eqref{e:ub-bounds}.
			\end{proof}
		After we prove Theorem \ref{t.w08293}, we are ready to prove Theorem \ref{t.w08292} and Theorem \ref{t.w08291}.
		\begin{proof}[Proof of Theorem \ref{t.w08291}]	By rescaling, it suffices to prove the result when $t=1$, so we have $T \geq 1$. 
			Without loss of generality, 
			we assume that $M \geq C_0$, so Theorem \ref{t.w08293} implies that, for $N \geq N_* := \exp(\exp( \exp (M_{7})))$
				\begin{equation}\label{esti_para}
					\|P_N u\|_{L^\infty_t L^\infty_x ([1/2, 1] \times \R^3)} \leq M_{1}^{-1}N.
				\end{equation}
				On $[1/2, 1] \times \R^3$, we split $u= u^{\textrm {lin}} + u^{\textrm {nlin}}$ where $u^{\textrm {lin}}$ is the linear solution 
				$$
				u^{\textrm {lin}} := e^{t \Delta} u (0) 
				$$
				and $u^{\textrm {nlin}} := u - u^{\textrm {lin}}$ is the nonlinear component, and similarly, we split $\omega= \omega^{\textrm {lin}} + \omega^{\textrm {nlin}}$. From the standard heat kernel bound \eqref{e:bernsteintime} in Lorentz spaces and hypothesis \eqref{e:ub-bounds}, we have
				\begin{align*}\label{estimate : ulin}
					\|\nabla^{j}u^{\lin}\|_{L_{t}^{\infty}L_{x}^{p,q_1} ([1/2, 1] \times \R^3)}
					\lesssim
					M,
				\end{align*}
				where $j\geq 0$ and $3\leq p \leq \infty$, $1/q_{1}\leq 1/p+1/\mathfrak{q}_{0}+2/3$. In particular, we have
				\begin{equation}\label{estimate : ulin_leb}
					\|\nabla^{j}u^{\lin}\|_{L_{t}^{\infty}L_{x}^{p} ([1/2, 1] \times \R^3)}
					\lesssim M \quad \text{and} \quad \|\nabla^{j}\omega^{\lin}\|_{L_{t}^{\infty}L_{x}^{p} ([1/2, 1] \times \R^3)}
					\lesssim M
				\end{equation}
				for all $j\geq 0$ and $3\leq p \leq \infty$.
				As in the proof of Proposition \ref{p.w08291}, for the vorticity, we define the following nonlinear enstrophy-type quantity for $t \in [1/2, 1]$, 
				$$
				F(t) = \frac{1}{2} \int_{\R^3} |\omega^{\textrm{nlin}} (t,x)|^2 d x.
				$$
				By Plancherel's theorem, we have 
				\begin{equation*}
					\|\nabla u^{\textrm{nlin}}\|_{L^2 (\R^3)} \lesssim \| \omega^{\textrm{nlin}}\|_{L^2 (\R^3)} = \sqrt{2} F(t)^{1/2}
				\end{equation*}
				From the vorticity equation \eqref{eq : vorticity} and integrating by part we obtain
				$$
				\partial_t F(t) = - F_1 (t) - F_2 (t)+ F_3 (t) + F_4 (t) + F_5 (t) + F_6 (t)
				$$
				where 
				$$F_1 (t) = \int_{\R^3} |\nabla \omega^{\textrm{nlin}} (t,x)|^2 dx, \quad F_2 (t) = - \int_{\R^3} \omega^{\textrm{nlin}} \cdot (u \cdot \nabla) \omega^{\textrm{lin}} dx, $$
				$$F_3 (t) = \int_{\R^3}  \omega^{\textrm{nlin}} \cdot (\omega^{\textrm{nlin}}  \cdot \nabla) u^{\textrm{nlin}} dx, \quad F_4 (t) = \int_{\R^3} \omega^{\textrm{nlin}} \cdot (\omega^{\textrm{nlin}} \cdot \nabla) u^{\textrm{lin}} dx, $$
				$$F_5 (t) = \int_{\R^3} \omega^{\textrm{nlin}} \cdot (\omega^{\textrm{lin}} \cdot \nabla) u^{\textrm{nlin}} dx, \quad F_6 (t) = \int_{\R^3} \omega^{\textrm{nlin}} \cdot (\omega^{\textrm{lin}} \cdot \nabla) u^{\textrm{lin}} dx.$$ 
				Among these six terms, the third term $F_3(t)$ is more delicate to treat as there are three non-linear terms involving in. For terms $F_2 (t)$ and $F_6(t)$, we use Hölder's inequality to get that, for $t \in [1/2, 1]$, 
				$$
				F_2 (t) \leq \|\omega^{\textrm{nlin}} \|_{L^2 (\R^3)} \|u\|_{L^4 (\R^3)} \| \nabla \omega^{\textrm{lin}}\|_{L^4 (\R^3)} \lesssim M^2 F(t)^{1/2} \leq M^4 + F(t)
				$$
				and 
				$$
				F_6 (t) \leq \|\omega^{\textrm{nlin}} \|_{L^2 (\R^3)} \|\omega^{\textrm{lin}} \|_{L^4 (\R^3)} \| \nabla u^{\textrm{lin}}\|_{L^4 (\R^3)} \lesssim M^2 F(t)^{1/2} \leq M^4 + F(t)
				$$
				where we used estimate \eqref{estimate : ulin_leb} and Cauchy-Schwarz inequality. Similarly, for $F_4 (t)$ and $F_5(t)$, we have for $t \in [1/2, 1]$, 
				$$
				F_4 (t) \leq \|\omega^{\textrm{nlin}} \|_{L^2 (\R^3)} \|\omega^{\textrm{nlin}} \|_{L^2 (\R^3)} \| \nabla u^{\textrm{lin}}\|_{L^\infty (\R^3)} \lesssim M F(t)
				$$
				and
				$$
				F_5 (t) \leq \|\omega^{\textrm{nlin}} \|_{L^2 (\R^3)} \|\omega^{\textrm{lin}} \|_{L^\infty (\R^3)} \| \nabla u^{\textrm{nlin}}\|_{L^2 (\R^3)} \lesssim M F(t).
				$$			
			We now turn to deal with $F_3 (t)$ by using the bound \eqref{esti_para}. By Littlewood-Paley decomposition
			\begin{equation*}
				F_3(t)\lesssim \sum_{N_1, N_2, N_3} \displaystyle\int_{\mathbb{R}^3} P_{N_1}\omega^{\textrm {nlin}}\cdot(P_{N_2} \omega^{\textrm {nlin}}\cdot \nabla ) P_{N_3}u^{\textrm {nlin}}~dx,
			\end{equation*}
			where $N_1, N_2, N_3$ range over powers of two. The integral does not vanish in three cases: $N_1\sim N_2 \gtrsim N_3$, $ N_2 \sim N_3\gtrsim N_1$ and $N_1\sim N_3\gtrsim N_2$.
			Then we control the two highest frequency terms in $L^2_x$ and the lower one in $L^{\infty}_x$, and by the Littlewood-Paley,  we obtain
			\begin{equation*}
				F_3(t) \lesssim \sum_{N_1, N_2,N_3: N_1\sim N_2 \gtrsim N_3} \|P_{N_1} \omega^{\textrm {nlin}}\|_{L_x^2(\mathbb{R}^3)} \|P_{N_2} \omega^{\textrm {nlin}}\|_{L_x^2(\mathbb{R}^3)} \|P_{N_3} \omega^{\textrm {nlin}}\|_{L_x^{\infty}(\mathbb{R}^3)}.
			\end{equation*}
			By \eqref{e:pnomjbounds}, 
			\begin{equation*}
				\|P_{N_3} \omega^{\textrm {nlin}}\|_{L_x^{\infty}(\mathbb{R}^3)} \lesssim O(MN_3^2).
			\end{equation*}
			If $N_3\geq N_*$, by\eqref{esti_para} and \eqref{e:bernstein} we have ($j=1$, $p_1=q_1=p_2=q_2=\infty$)
			\begin{equation*}
				\|P_{N_3} \omega^{\textrm {nlin}}\|_{L_x^{\infty}(\mathbb{R}^3)} =	\|P_{N_3} \nabla \times u^{\textrm {nlin}}\|_{L_x^{\infty}(\mathbb{R}^3)} \lesssim N_3\|P_{N_3} u^{\textrm {nlin}}\|_{L_x^{\infty}(\mathbb{R}^3)}\lesssim O(M_1^{-1}N_3^2).
			\end{equation*} 
			We thus obtain
			\begin{align*}
				\sum_{N_3\lesssim N_2} \|P_{N_3} \omega^{\textrm {nlin}}\|_{L_x^{\infty}(\mathbb{R}^3)} & \lesssim \sum_{N_*<N_3\leq N_2}M_1^{-1}N_3^2+\sum_{N_3<N_*}MN_3^2 \\
				& \lesssim M_1^{-1}N_2^2+MN_{*}^2
			\end{align*}
			and by Cauchy-Schwarz, 
			\begin{equation*}
				F_3(t)\lesssim \sum_{N_1} \|P_{N_1} \omega^{\textrm {nlin}}\|_{L_x^2(\mathbb{R}^3)}^2 (M_1^{-1}N_1^2+MN_{*}^2).
			\end{equation*}
			On the other hand, by Plancherel's theorem, 
			\begin{equation*}
				F_1(t)=\|\nabla \omega^{\textrm{nlin}} (t,x)\|_{L_x^2(\mathbb{R}^3)}^2\sim \sum_{N_1} \|P_{N_1} \omega^{\textrm {nlin}}\|_{L_x^2(\mathbb{R}^3)} N_1^2.
			\end{equation*}
			Recall that
$			F(t)=\frac{1}{2}\int_{\R^3} |\omega^{\nlin}|^{2}\,dx $, then by Plancherel's theorem, we get
			\begin{equation*}
				F(t)\sim \sum_{N_1}\|P_{N_1}\omega^{\nlin}\|^{2}_{L_{x}^{2}(\R^3)}
				.
			\end{equation*}
			Therefore,
			\begin{equation*}
				F_{3}(t) \lesssim M_{1}^{-1}F_{1}(t) +MN_{*}^{2}F(t)
				.
			\end{equation*}
			Combing the above estimates yields
			\begin{equation}\label{e.w09121}
				\partial_{t}F(t) +F_{1}(t)
				\lesssim
				M^4 +F(t) +M_{1}^{-1}F_{1} +MN_{*}^{2}F(t) +MF(t)
				\lesssim
				MN_{*}^{2}F(t) +M^4
				.
			\end{equation}
			Integrating from $t_{1}$ to $t_{2}$ with $1/2 <t_1 <t_2 <1$ and $|t_2 -t_1|\leq M^{-1}N_{*}^{-2}$ and applying Gronwall gives
			\begin{equation}\label{e.w09122}
				F(t_2)\lesssim F(t_1) +M^4
				.
			\end{equation}
			Next, by \eqref{e.w07019}, we see
			\begin{equation*}
				\int_{1/2}^{1} F(t)\,dt
				=
				\frac{1}{2}\int_{1/2}^{1}\int_{\R^3} |\omega^{\nlin}|^{2}\,dxdt
				\lesssim
				M^4
				.
			\end{equation*}
			Therefore, by the pigeonhole principle, we see on any time interval in $[1/2,1]$ of length $M^{-1}N_{*}^{-2}$, there exists at least one time $t$ such that $
				F(t)
				\lesssim
				M^4 $.
			Thus, for all time $t \in [3/4,1]$, we obtain
			\begin{equation}\label{e.w09123}
				F(t)
				\lesssim
				M^{5}N_{*}^{2}
				\lesssim
				N_{*}^{O(1)}
				.
			\end{equation}
			Then, by the fundamental theorem of calculus together with equations \eqref{e.w09121}, \eqref{e.w09122}, and \eqref{e.w09123}, we get
			\begin{equation*}
				\int_{3/4}^{1} F_{1}(t)\,dt\lesssim N_{*}^{O(1)}
				.
			\end{equation*}
			Now we conclude the proof by appealing to Proposition \ref{p.w08291}.
		\end{proof}
		Next, we prove Theorem \ref{t.w08292}.
		\begin{proof}[Proof of Theorem \ref{t.w08292}]
			We argue by contradiction. First, we rescale $T_* =1$. Suppose
			\begin{equation*}
				\limsup_{t\rightarrow 1+}\frac{\|u\|_{L^{3, \mathfrak{q}_{0} }(\R^3)}}{(\log\log\log\frac{1}{1-t})^c}
				<
				\infty
				,
			\end{equation*}
			where $c>0$ is a small constant. Then, for some constant $C>0$ we have for $0< t \leq 1$
			\begin{equation*}
				\|u\|_{L^{3, \mathfrak{q}_{0} }(\R^3)}
				\leq
				C\left(\log\log\log\left(100+\frac{1}{1-t}\right)\right)^c
				.
			\end{equation*}
			Then, by Theorem \ref{t.w08291}, we get for $j=0,1$ and for $0<1/2 \leq t \leq 1$
			\begin{equation*}
				\begin{split}
					\|\nabla^{j}u\|_{L^{\infty }(\R^3)}
					&\lesssim
					\exp{\exp{\exp{(M^{O(1)})}}}t^{-(j+1)/2}
					\\&\lesssim
					\exp{\exp{\exp{\left(\log\log\log\left(100+\frac{1}{1-t}\right)\right)^{O(1)}}}}t^{-(j+1)/2}
					\leq
					C(1-t)^{-1/10}.
				\end{split}
			\end{equation*}
			Similarly, we have
			\begin{equation*}
				\|\nabla^{j}\omega\|_{L^{\infty }(\R^3)}
				\leq
				C(1-t)^{-1/10}
				.
			\end{equation*}
			This implies that $u\in L_{t}^{2}L_{x}^{\infty}$ contradicting the blow-up criterion by Prodi-Ladyzhenskaya-Serrin.
		\end{proof}

	%%%%%%%%%%%%%%%%%%%%%%%%%%%%%%%%%%%%%%%%%%%%%%%%%%%%%%%%%%%%%%%%%%	
%	\newpage

		\appendix
%	\textcolor{red}{I remove all this part into the proof.}	
%\section{Auxiliary estimate}
%When proving Proposition \ref{p.w08291} (\textit{epoch of regularity}), we need  

%The following lemma (See \cite{tao} on page 13) gives an energy estimate for the term $\nabla u^{\nlin}$ satisfying equation 
%\begin{align}\label{eq : u_nlin2}
%\partial_t u^{\nlin}	-	\Delta u^{\nlin}+	u\cdot \nabla u	+	\nabla p	= 0.
%\end{align}
%\begin{lemma}\label{lemma-A.1}
%	We define the enstrophy-type quantity for $t \in [0, 1]$
%	\begin{equation*}
%		E(t) = \frac{1}{2} \int_{\mathbb R^3} |\nabla u^{\nlin}(t,x)|^2 dx.
%	\end{equation*}
%	Then, we can find a time $t_1 \in [0, \frac{1}{2}]$ such that$E(t_1) \lesssim M^4. 	$
%\end{lemma}

		\section{Carleman estimates}
		\begin{theorem}\label{carleman}
			(General Carleman inequality) Let $[t_1, t_2]$ be a time interval, and let $u \in C_{c}^{\infty}([t_1, t_2]\times \R^d \rightarrow \R^m)$ be a vector-valued test function solving the backwards heat equation
			\begin{equation*}	
				Lu=f
				.
			\end{equation*}
			with $L$ the backwards heat operator
			\begin{equation*}	
				L:=\partial_t + \Delta
				.
			\end{equation*}
			and let $g:[t_1,t_2]\times \R^d \rightarrow \R$ denote the function
			\begin{equation*}	
				F:=\partial_t g - \Delta g -|\nabla g|^2
				.
			\end{equation*}  
			Then we have the inequality
			\begin{equation*}	
				\partial_t \int_{\R^d} \left(|\nabla u|^2 + \frac{1}{2}F|u|^2 \right)e^g \,dx
				\geq
				\int_{\R^d} \left(\frac{1}{2}(LF)| u|^2 + 2D^{2}g(\nabla u, \nabla u)-\frac{1}{2}|Lu|^2 \right)e^g \,dx
			\end{equation*}  
			for all $t \in I$, where $D^2 g$ is the bilinear form expressed in coordinates as 
			\begin{equation*}	
				D^2 g (v,w):= (\partial_i \partial_j g)v_i \cdot w_j
			\end{equation*} 
			with the usual summation conventions. In particular, from the fundamental theorem of calculus one has
			\begin{equation*}	
				\int_{t_1}^{t_2} \int_{\R^d} \left(\frac{1}{2}(LF)| u|^2 + 2D^{2}g(\nabla u, \nabla u) \right)e^g \,dx
				\geq
				\frac{1}{2}\int_{t_1}^{t_2}\int_{\R^d} | Lu|^2  e^g \,dx
				+
				\int_{\R^d} \left(|\nabla u|^2 + \frac{1}{2}F|u|^2 \right)e^g \,dx|_{t=t_1}^{t=t_2}
				.
			\end{equation*} 
		\end{theorem}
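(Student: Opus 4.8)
The plan is to establish the stated bound as a weighted energy identity for the backwards heat equation, organised through a conjugation by the half-weight $e^{g/2}$. Since $u\in C^\infty_c([t_1,t_2]\times\R^d)$ is smooth with compact support in $x$, every spatial integration by parts below produces no boundary term; the only surviving boundary contribution is the temporal one at $t=t_1,t_2$ recorded in the statement, which is exactly what appears after one application of the fundamental theorem of calculus. So the real content is the pointwise-in-time differential inequality $\partial_t\mathcal{F}(t)\ge\int_{\R^d}\big(\tfrac12(LF)|u|^2+2D^2g(\nabla u,\nabla u)-\tfrac12|Lu|^2\big)e^g\,dx$, where $\mathcal{F}(t)$ denotes the left-hand side.

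First I would set $w:=e^{g/2}u$ and compute the conjugated operator:
\[
e^{g/2}L\!\left(e^{-g/2}w\right)=\partial_t w-Bw+\mathcal{S}w,\qquad B:=\nabla g\cdot\nabla+\tfrac12\Delta g,\quad \mathcal{S}:=\Delta+V,\ \ V:=\tfrac14|\nabla g|^2-\tfrac12\partial_t g,
\]
where $B$ is skew-adjoint on $L^2_x$ and $\mathcal{S}$ is self-adjoint on $L^2_x$ for each fixed $t$. Using $F=\partial_t g-\Delta g-|\nabla g|^2$ and one integration by parts one checks
\[
\mathcal{F}(t):=\int_{\R^d}\!\Big(|\nabla u|^2+\tfrac12 F|u|^2\Big)e^g\,dx=\int_{\R^d}\!\big(|\nabla w|^2-V|w|^2\big)\,dx=-\langle\mathcal{S}w,w\rangle_{L^2_x}.
\]
Differentiating in $t$, using $\partial_t\mathcal{S}=\partial_t V$ (a multiplication operator) and $\partial_t w=Bw+e^{g/2}Lu-\mathcal{S}w$ (from the conjugated equation), together with $-2\langle\mathcal{S}w,Bw\rangle=\langle[B,\mathcal{S}]w,w\rangle$ (skew-adjointness of $B$), I would get
\[
\mathcal{F}'(t)=-\langle(\partial_t V)w,w\rangle+\langle[B,\mathcal{S}]w,w\rangle+2\|\mathcal{S}w\|_{L^2_x}^2-2\langle\mathcal{S}w,e^{g/2}Lu\rangle.
\]
Completing the square in the last two terms, $2\|\mathcal{S}w\|_{L^2_x}^2-2\langle\mathcal{S}w,e^{g/2}Lu\rangle=2\big\|\mathcal{S}w-\tfrac12 e^{g/2}Lu\big\|_{L^2_x}^2-\tfrac12\!\int|Lu|^2e^g\,dx\ge-\tfrac12\!\int|Lu|^2e^g\,dx$, which produces the $-\tfrac12|Lu|^2$ term.

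It then remains to show $-\langle(\partial_t V)w,w\rangle+\langle[B,\mathcal{S}]w,w\rangle=\int\big(\tfrac12(LF)|u|^2+2D^2 g(\nabla u,\nabla u)\big)e^g\,dx$. Here I would expand $[B,\mathcal{S}]=[\nabla g\cdot\nabla,\Delta]+\tfrac12[\Delta g,\Delta]+[B,V]$, note that $[B,V]$ is multiplication by $\nabla g\cdot\nabla V$ and that $[\nabla g\cdot\nabla,\Delta]\phi=-\nabla(\Delta g)\cdot\nabla\phi-2\,(D^2 g):(D^2\phi)$; integrating by parts once inside $\langle\,\cdot\,w,w\rangle$ converts the $(D^2 g):(D^2 w)$ piece into a Hessian form in $\nabla w$, which, after undoing the conjugation $w\mapsto u$ (another integration by parts in the cross terms $w\,\nabla g\cdot\nabla w$), becomes $2\int D^2 g(\nabla u,\nabla u)e^g\,dx$. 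Every remaining term is quadratic in $u$ rather than $\nabla u$; adding $-\langle(\partial_t V)w,w\rangle$ and regrouping — again invoking $F=\partial_t g-\Delta g-|\nabla g|^2$ — these collect into $\tfrac12\int(\partial_t F+\Delta F)|u|^2e^g\,dx=\tfrac12\int(LF)|u|^2e^g\,dx$. This gives the differential inequality; integrating over $[t_1,t_2]$ and rearranging yields the integrated form, the only boundary term being $\mathcal{F}(t)\big|_{t=t_1}^{t=t_2}$.

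The hard part will be the last step: the bookkeeping in the commutator expansion. One has to carry roughly a dozen terms through $[B,\mathcal{S}]$, the integration by parts that extracts the Hessian form, and the conjugation back to $u$, and then verify that after pulling out $2D^2g(\nabla u,\nabla u)$ every remaining $|u|^2$-coefficient — contributions from $\nabla(\Delta g)$, $\Delta^2 g$, $|D^2 g|^2$, $D^2g(\nabla g,\nabla g)$, $\nabla g\cdot\nabla V$ and $-\partial_t V$ — assembles \emph{exactly} into $\tfrac12(LF)$. Everything else (the conjugation algebra, the square completion producing $-\tfrac12|Lu|^2$, and the boundary accounting) is routine, so this algebraic identity is the sole delicate point.
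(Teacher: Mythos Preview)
The paper does not actually prove this theorem: in the appendix the three Carleman inequalities are simply stated (they are quoted from Tao's paper \cite{tao}), so there is no ``paper's own proof'' to compare against. That said, your approach is correct and is in fact the standard way this inequality is derived: conjugate by $e^{g/2}$, split the conjugated operator into its skew part $B=\nabla g\cdot\nabla+\tfrac12\Delta g$ and self-adjoint part $\mathcal S=\Delta+V$, recognise $\mathcal F(t)=-\langle\mathcal S w,w\rangle$, differentiate, and complete the square to produce the $-\tfrac12|Lu|^2$ term. Your verification that $\mathcal F(t)=\int(|\nabla w|^2-V|w|^2)\,dx$ is right, and the square-completion step is clean.

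You are also right that the only genuinely delicate point is the commutator bookkeeping showing that $-\langle(\partial_tV)w,w\rangle+\langle[B,\mathcal S]w,w\rangle$ equals $\int\big(\tfrac12(LF)|u|^2+2D^2g(\nabla u,\nabla u)\big)e^g\,dx$. One small caution there: when you write the zero-order contributions as assembling into $\tfrac12(LF)$, remember that after undoing the conjugation $w=e^{g/2}u$ the identity you need is really at the level of the \emph{integral} $\int(\cdots)|w|^2\,dx=\int\tfrac12(LF)|u|^2e^g\,dx$, not a pointwise equality of coefficients; some of the terms you list (e.g.\ the $|D^2g|^2$ and $D^2g(\nabla g,\nabla g)$ pieces) only match after one further integration by parts. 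With that understood, the algebra does close, and your outline constitutes a complete proof once that final identity is written out.
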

		\begin{theorem}\label{carleman1}
			(First Carleman inequality) Let $T>0$, $0<r_- <r_+$, and let $\mathcal A$ denote the cylindrical annulus
			\begin{equation*}	
				\mathcal A:= \{(t,x)\in \R\times \R^3: t \in [0,T]; r_- \leq |x| \leq r_+  \}
				.
			\end{equation*}
			Let $u:\mathcal A \rightarrow \R^3$ be a smooth function obeying the differential inequality
			\begin{equation*}	
				|Lu| \leq C_{0}^{-1} T^{-1}|u| +C_{0}^{-1/2}T^{-1/2}|\nabla u|
			\end{equation*}
			on $\mathcal A$. Assume the inequality
			\begin{equation*}	
				r_{-}^{2} \geq 4C_{0}T.
			\end{equation*}
			Then one has
			\begin{equation*}	
				\int_{0}^{T/4} \int_{10r_{-}\leq |x| \leq r_{+}/2} (T^{-1}|u|^{2} + |\nabla u|^{2})\,dxdt
				\lesssim
				C_{0}^{2} e^{-\frac{r_{-}r_{+}}{4C_{0}T}}(X+e^{2r_{+}^{2}/C_{0}T}Y)
				,
			\end{equation*}
			where
			\begin{equation*}	
				X:= \int \int_{\mathcal A} e^{2|x|^{2}/C_{0}T}(T^{-1}|u|^{2} + |\nabla u|^{2})\,dxdt
			\end{equation*}
			and
			\begin{equation*}	
				Y:=\int_{r_{-}\leq |x| \leq r_{+}}|u(0,x)|^{2}\,dx.
			\end{equation*}
		\end{theorem}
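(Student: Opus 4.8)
The plan is to obtain Theorem~\ref{carleman1} from the general Carleman inequality (Theorem~\ref{carleman}) by the argument of Escauriaza--Seregin--Sverak and Tao. First I would localise in space: fix a smooth cutoff $\chi$ with $\chi\equiv 1$ on $\{2r_-\le |x|\le r_+/2\}$, $\supp\chi\subset\{r_-\le |x|\le r_+\}$, $|\nabla\chi|+r_-|\nabla^2\chi|\lesssim r_-^{-1}$ on the inner shell $S_-:=\{r_-\le|x|\le 2r_-\}$, and $|\nabla\chi|+r_+|\nabla^2\chi|\lesssim r_+^{-1}$ on the outer shell $S_+:=\{r_+/2\le|x|\le r_+\}$. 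Then $v:=\chi u$ is compactly supported in $x$ and solves $Lv=\chi\,Lu+E$ with the commutator $E:=2\nabla\chi\cdot\nabla u+(\Delta\chi)u$ supported on $S_-\cup S_+$. Applying Theorem~\ref{carleman} to $v$ on $[0,T/4]$ and moving the $-\tfrac12|Lv|^2$ term to the other side gives, schematically,
\begin{equation*}
\int_0^{T/4}\!\!\!\int \Big(\tfrac12(LF)|v|^2+2D^2g(\nabla v,\nabla v)\Big)e^g\,dx\,dt \;\lesssim\; \int_0^{T/4}\!\!\!\int |\chi Lu|^2 e^g + \int_0^{T/4}\!\!\!\int_{S_-\cup S_+}|E|^2 e^g + \Big[\int\big(|\nabla v|^2+\tfrac12 F|v|^2\big)e^g\Big]_{t=0}.
\end{equation*}

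The heart of the proof is the choice of weight. Following \cite{ESS,tao} I would take $g(t,x)$ quadratic in $x$ with a suitably chosen time profile, $g(t,x)=\alpha(t)|x|^2+\beta(t)$, arranged so that: (i) $g$ is convex in $x$, hence $2D^2g(\nabla v,\nabla v)\gtrsim (C_0T)^{-1}|\nabla v|^2$; (ii) $F=\partial_t g-\Delta g-|\nabla g|^2\le 0$ while $LF=\partial_t F+\Delta F$ is negative but only of size $O((C_0T)^{-2})$, so that the positive part of $\tfrac12(LF)|v|^2+2D^2g(\nabla v,\nabla v)$ still dominates $(C_0T)^{-1}(|\nabla v|^2+T^{-1}|v|^2)$ on $\{\chi\equiv1\}$; and (iii) the three size comparisons
\begin{equation*}
g\ge \frac{r_-r_+}{4C_0T}\ \text{ on }\{10r_-\le|x|\le r_+/2\},\qquad g\le \frac{2|x|^2}{C_0T}\ \text{ on }\{|x|\ge r_+/2\},\qquad g(0,x)\le \frac{2r_+^2}{C_0T}\ \text{ on }\{r_-\le|x|\le r_+\}
\end{equation*}
all hold on $[0,T/4]$. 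The hypothesis $r_-^2\ge 4C_0T$ is what makes (iii) compatible with the sign conditions in (i)--(ii) and keeps $e^g$ $O(1)$-comparable to $e^{r_-r_+/4C_0T}$ on $S_-\times[0,T/4]$. Granting such a $g$, the three error terms are controlled as follows. The term $\iint|\chi Lu|^2e^g$ is handled by the differential inequality, since $|\chi Lu|^2\lesssim C_0^{-2}T^{-2}|v|^2+C_0^{-1}T^{-1}|\nabla v|^2$ and the $C_0$-gain lets it be absorbed into the positive left-hand side. On the outer shell, $|E|^2\lesssim r_+^{-2}|\nabla u|^2+r_+^{-4}|u|^2\lesssim T^{-1}(|\nabla u|^2+T^{-1}|u|^2)$ (using $r_+^2\ge r_-^2\ge 4C_0T$) and $e^g\le e^{2|x|^2/C_0T}$ there, so that piece is $\lesssim C_0^{O(1)}T^{-1}X$. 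On the inner shell $e^g\lesssim e^{r_-r_+/4C_0T}$, and the resulting contribution, together with the $t=0$ boundary term $\int|\nabla v(0)|^2 e^{g(0,\cdot)}+\ldots\lesssim e^{2r_+^2/C_0T}\int_{r_-\le|x|\le r_+}(|\nabla u(0)|^2+T^{-1}|u(0)|^2)\,dx$, is reduced to $e^{r_-r_+/4C_0T}$ times a constant multiple of $Y$ after a short local energy estimate that trades the $\int|\nabla v(0)|^2$ for an $L^2$ bound of $u(0,\cdot)$ over the annulus (integrating by parts and again invoking the bound on $Lu$). Since $e^g\ge e^{r_-r_+/4C_0T}$ on the middle region where $\chi\equiv 1$, collecting the bounds, dividing through by $e^{r_-r_+/4C_0T}$, and cancelling the powers of $T$ yields the asserted estimate, the prefactor $C_0^2$ coming from careful tracking of the constants.

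The step I expect to be the main obstacle is exactly the construction and verification of $g$: one has to balance convexity and the signs and sizes of $F$ and $LF$ against the three comparisons in (iii) simultaneously, and keep track of numerical constants precisely enough to land the exponents $\tfrac{r_-r_+}{4C_0T}$ and $\tfrac{2r_+^2}{C_0T}$ and the $C_0^2$ prefactor; this is the place where $r_-^2\ge 4C_0T$ is essential. A secondary, more routine, technical point is the passage from the $\int|\nabla v(0)|^2$ boundary term to the $L^2$-only quantity $Y$, which relies on an integration by parts and the differential inequality for $Lu$ rather than being immediate.
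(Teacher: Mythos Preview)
The paper does not prove this statement; Theorem~\ref{carleman1} is recorded in the appendix without argument, quoted from \cite{tao}. Your outline---spatially localise by $\chi$, apply Theorem~\ref{carleman} to $v=\chi u$, and design $g$ so that the outer-shell commutator is absorbed in $X$ while the $t=0$ data is absorbed in $e^{2r_+^2/C_0T}Y$---is exactly Tao's scheme, and you are right that building $g$ is the crux.

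Your condition (ii), however, contains a real gap. You want $LF<0$ of size $O((C_0T)^{-2})$ and then claim that $\tfrac12(LF)|v|^2+2D^2g(\nabla v,\nabla v)$ still dominates $(C_0T)^{-1}(|\nabla v|^2+T^{-1}|v|^2)$. That is impossible: if $LF<0$ the only non-negative piece on the left is $2D^2g(\nabla v,\nabla v)\sim (C_0T)^{-1}|\nabla v|^2$, which carries no $|v|^2$, so nothing controls the $T^{-1}|v|^2$ on the right. In the identity of Theorem~\ref{carleman} the only source of zero-order coercivity is the $\tfrac12(LF)|v|^2$ term, so one actually needs $LF\ge c(C_0T)^{-2}>0$; it is this positivity, not the sign of $F$, that pins down the time profile of $g$ in Tao's argument. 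Two smaller points: the cross exponent $r_-r_+/(4C_0T)$ in the conclusion---as opposed to $r_\pm^2/(C_0T)$---is consistent with a weight \emph{linear} in $|x|$ of slope $\sim r_+/(C_0T)$ rather than your quadratic ansatz, and indeed with $g=\alpha(t)|x|^2+\beta(t)$ the inner-shell commutator cannot be absorbed into $X$ once $r_+\gg r_-$ (nor can a spacetime integral over $S_-\times[0,T/4]$ be ``reduced to $Y$'', which lives only at $t=0$); and applying the identity on $[0,T/4]$ leaves a boundary term at $t=T/4$ that your display drops---one removes it with a time cutoff supported in $[0,T]$, the resulting time-commutator then feeding $X$.
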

		\begin{theorem}\label{carleman2}
			(Second Carleman inequality) 
			Let $T,r>0$ and let $\mathcal C$ denote the cylindrical region. Assume the inequality
			\begin{equation*}	
				\rho^2 \geq 4000T
				.
			\end{equation*}
			Then for any
			\begin{equation*}	
				0<t_1 \leq t_0 <\frac{T}{1000}
			\end{equation*}
			one has
			\begin{equation*}	
				\int_{t_0}^{2t_0} \int_{|x| \leq \rho/2} (T^{-1}|u|^{2} + |\nabla u|^{2})\,dxdt
				\lesssim
				Xe^{-\frac{\rho^2}{500T}}
				+
				t_{0}^{3/2}(et_{0}/t_1)^{O(\rho^2/t_0)}Y
				,
			\end{equation*}
			where
			\begin{equation*}	
				X:= \int_{0}^{T} \int_{|x|\leq \rho} (T^{-1}|u|^{2} + |\nabla u|^{2})\,dxdt
			\end{equation*}
			and
			\begin{equation*}	
				Y:=\int_{|x| \leq \rho}|u(0,x)|^{2}t_{1}^{-3/2}e^{-|x|^{2}/4t_1}\,dx.
			\end{equation*}
		\end{theorem}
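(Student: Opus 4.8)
The plan is to obtain Theorem~\ref{carleman2} from the general Carleman inequality of Theorem~\ref{carleman}, following \cite{ESS} and its quantitative form in \cite{tao}; since the conclusion is quantitatively identical to the one there, the real task is to select the weight and to check that all constants are admissible. Because the $t_1$-dependent kernel $t_1^{-3/2}e^{-|x|^2/4t_1}$ in $Y$ is, up to a constant, the heat kernel at time $t_1$, the natural choice of weight is
\begin{equation*}
	g(t,x)=-\frac{|x|^2}{4(t+t_1)}-\frac32\log(t+t_1)-\alpha\log\frac{t+t_1}{t_1},
\end{equation*}
where $\alpha\ge 0$ is a free parameter, to be optimized at the end (it will be of size $\alpha\sim\rho^2/t_0$). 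A direct computation gives $\nabla g=-\frac{x}{2(t+t_1)}$, $D^2 g=-\frac{1}{2(t+t_1)}\Id$, and $F=\partial_t g-\Delta g-|\nabla g|^2=-\frac{\alpha}{t+t_1}$, hence $LF=\partial_t F=\frac{\alpha}{(t+t_1)^2}\ge 0$, which is exactly the pseudoconvexity input Theorem~\ref{carleman} requires. The Hessian term $2D^2 g(\nabla u,\nabla u)$ has the unfavourable sign, but it costs only the single factor $(t+t_1)^{-1}$; as in \cite{ESS,tao} this is dealt with either by running the monotone quantity of Theorem~\ref{carleman} with the integrating factor $(t+t_1)$ or, equivalently, after an Appell-type change of variables which turns the backward heat equation carrying this Gaussian weight into a forward equation with a convex weight, and costs at most a multiplicative $(t_0/t_1)^{O(1)}$.

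Next I would localize in space. Fix a smooth cutoff $\chi$ with $\chi=1$ on $\{|x|\le 3\rho/4\}$ and $\chi$ supported in $\{|x|\le\rho\}$, and apply the integrated form of Theorem~\ref{carleman} to $\chi u$ on $[0,2t_0]$. Since $L(\chi u)=\chi\,Lu+2\nabla\chi\cdot\nabla u+(\Delta\chi)u$, the commutator term is supported in the annulus $\{3\rho/4\le|x|\le\rho\}$, where (using $\rho^2\ge 4000T$ and $t_0<T/1000$, so that $t+t_1\lesssim t_0\ll T$ on the relevant slab) $e^{g}$ is bounded, up to polynomial factors in $(t+t_1)^{-1}$, by $e^{-c\rho^2/(t+t_1)}\ll e^{-\rho^2/500T}$; this is the source of the term $Xe^{-\rho^2/500T}$. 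The contribution of $\chi\,Lu$ is controlled by the hypothesis $|Lu|\le C_0^{-1}T^{-1}|u|+C_0^{-1/2}T^{-1/2}|\nabla u|$: for $C_0$ large, $\tfrac12\int|\chi Lu|^2 e^{g}$ is a small multiple of $\int_{|x|\le\rho}(T^{-2}|u|^2+T^{-1}|\nabla u|^2)e^{g}$, which, again because $t+t_1\lesssim T$ there, is absorbed into the favourable terms $\tfrac12(LF)|u|^2 e^{g}$ and $|\nabla u|^2 e^{g}$ on the left. Writing $Q(t):=\int_{\R^3}\big(|\nabla(\chi u)|^2+\tfrac12 F|\chi u|^2\big)e^{g}\,dx$, the fundamental theorem of calculus in Theorem~\ref{carleman} then yields
\begin{equation*}
	\sup_{t\in[t_0,2t_0]}Q(t)\lesssim Q(0)+Xe^{-\rho^2/500T}.
\end{equation*}
At $t=0$ the weight is $e^{g(0,x)}=t_1^{-3/2}e^{-|x|^2/4t_1}$, so — after bounding $|\nabla u(0,\cdot)|^2$ in terms of $t_1^{-1}|u(0,\cdot)|^2$ on a slightly larger ball by a Caccioppoli-type step using the equation — $Q(0)\lesssim Y$. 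On the left, restricting the $x$-integral to $\{|x|\le\rho/2\}$ (where $\chi\equiv1$) and the $t$-integral to $[t_0,2t_0]$, and bounding $e^{g}$ from below there by a constant multiple of $(t_1/t_0)^{\alpha}t_0^{-3/2}e^{-|x|^2/4t_0}$, one obtains
\begin{equation*}
	\int_{t_0}^{2t_0}\int_{|x|\le\rho/2}\big(T^{-1}|u|^2+|\nabla u|^2\big)\,dx\,dt\lesssim t_0^{3/2}(t_0/t_1)^{\alpha}\big(Y+Xe^{-\rho^2/500T}\big).
\end{equation*}
Choosing $\alpha\sim\rho^2/t_0$ balances the loss $(t_0/t_1)^{\alpha}$ against the room in the exponent of $Xe^{-\rho^2/500T}$, and absorbing also the $(t_0/t_1)^{O(1)}$ from the Hessian step gives the claimed bound, with $(et_0/t_1)^{O(\rho^2/t_0)}$.

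The step I expect to be the main obstacle is the precise choice of the weight $g$ and the control of the sign-indefinite $D^2 g$ term: the whole mechanism collapses if the pseudoconvexity $LF\ge 0$ fails or if the Hessian term cannot be absorbed, so one must be careful with the logarithmic correction $-\alpha\log\frac{t+t_1}{t_1}$ and with the Appell change of variables. A close second is the bookkeeping of the cutoff commutator and of the $|Lu|^2$ error, making sure that every stray term is dominated either by $Xe^{-\rho^2/500T}$ or by a term that can be re-absorbed into the favourable side — this is where the hypotheses $\rho^2\ge4000T$, $t_0<T/1000$, and $C_0$ large are each used. Beyond these points I would follow the argument in \cite{ESS,tao} essentially verbatim, only checking that the constants in the conclusion have the asserted shape.
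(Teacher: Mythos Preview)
The paper does not actually supply a proof of Theorem~\ref{carleman2}: in the appendix the three Carleman inequalities are simply \emph{stated}, as quotations of the corresponding results from \cite{tao} (which in turn make quantitative the estimates of \cite{ESS}). There is therefore no ``paper's own proof'' to compare your proposal against.

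That said, your sketch is a faithful outline of the argument in \cite{tao}: the weight
\[
g(t,x)=-\frac{|x|^2}{4(t+t_1)}-\tfrac32\log(t+t_1)-\alpha\log\frac{t+t_1}{t_1}
\]
is exactly the one used there, the computation $F=-\alpha/(t+t_1)$, $LF=\alpha/(t+t_1)^2\ge0$ is correct, and the localization via a cutoff supported in $\{|x|\le\rho\}$, with commutator terms producing the $Xe^{-\rho^2/500T}$ contribution, is the standard mechanism. The final optimization $\alpha\sim\rho^2/t_0$ is also the right choice.

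Two places in your write-up are looser than they should be. First, the boundary term at $t=0$ in Theorem~\ref{carleman} is $\int(|\nabla(\chi u)|^2+\tfrac12 F|\chi u|^2)e^{g}\,dx$ with $F(0)=-\alpha/t_1<0$, so one does not need a Caccioppoli step to pass from $|\nabla u(0,\cdot)|^2$ to $|u(0,\cdot)|^2$; rather, the $|\nabla u|^2$ part of $Q(0)$ is simply discarded (it has the favourable sign once one tracks which side of the inequality it sits on), and only the $F|u|^2$ part survives, giving the factor $\alpha/t_1$ that is absorbed into $(et_0/t_1)^{O(\rho^2/t_0)}$. Second, the handling of the negative Hessian term $2D^2g(\nabla u,\nabla u)=-(t+t_1)^{-1}|\nabla u|^2$ is not done in \cite{tao} by an Appell transform but by inserting an integrating factor (equivalently, working with $(t+t_1)Q(t)$) so that the unwanted $-(t+t_1)^{-1}$ is cancelled by the derivative of the prefactor; your parenthetical ``running the monotone quantity with the integrating factor $(t+t_1)$'' is the correct description, and the Appell alternative, while morally equivalent, would need more justification if you actually took that route.
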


	\section*{Acknowledgement}
	This work was partially supported by the NSF Grant No. DMS-1928930 while the three authors participated in the Mathematical Fluid Dynamics program hosted by the Mathematical Sciences Research Institute in Berkeley, California, during the Spring 2021 semester. A part of this work was written when JH was in Evry, supported by the Sophie Germain Post-doc program of the Fondation Mathematique Jacques Hadamard, and she thanks Pierre Gilles Lemarié-Rieusset and Christophe Prange for their helpful discussions.
	WW was partially supported by an AMS-Simons travel grant and he is grateful to Tai-Peng Tsai for helpful discussions.
	
	%%%%%%%%%%%%%%%%%%%%%%%%%%%%%%%%%%%%%%%%%%%%%%%%%%%%%%%%%
\section*{Declarations}
Competing interests : On behalf of all authors, the corresponding author states that there is no conflict of
interest.

Availability of data and material : Not applicable.

Code availability : Not applicable.

	\bibliographystyle{siam}
	\bibliography{Feng-He-Wang}
	
\end{document}